\crefname{equation}{}{}
\def\Imm{\operatorname{Imm}}
\def\sgn{\operatorname{sgn}}
\newcommand{\CC}{{\mathbb C}}
\newcommand{\mc}[1]{\mathcal{#1}}
\newcommand{\Comment}[1]{{\color{blue} \sf ($\clubsuit$ #1 $\clubsuit$)}}
\newcommand{\Mat}{\text{Mat}}
\newcommand{\res}[2]{#1|_{#2}} 
\newcommand{\gr}[2]{\Gamma[#1, #2]} 
\newcommand{\gro}[1]{\Gamma(#1)} 
\newcommand{\wire}[2]{(#1,#2) }
\newcommand{\lrangle}[1]{\langle #1 \rangle}
\newcommand{\spn}[1]{\sigma(#1)}
\newcommand{\bbox}[1]{\mathbf{B}_{#1}}
\newcommand{\specialcell}[1]{\ifmeasuring@#1\else\omit$\displaystyle#1$\ignorespaces\fi}
\newtheorem{prop}{Proposition}[section] 
\newtheorem{cor}[prop]{Corollary} 
\newtheorem{lem}[prop]{Lemma}
\newtheorem{conj}[prop]{Conjecture}
\newtheorem{thm}[prop]{Theorem}
\theoremstyle{definition}
\newtheorem{defn}[prop]{Definition}
\newtheorem{rmk}[prop]{Remark}
\newtheorem{ex}[prop]{Example}
\newtheorem{question}[prop]{Question}
\numberwithin{equation}{section} 
\title{1324- and 2143-avoiding Kazhdan-Lusztig immanants and $k$-positivity}
\author{Sunita Chepuri$^*$}\thanks{$^*$University of Minnesota, 127 Vincent Hall, 206 Church St. SE, Minneapolis, MN 55455, chepu003@umn.edu}
\author{Melissa Sherman-Bennett$^\dagger$}{\thanks{$^\dagger$University of California at Berkeley, Evans Hall, Berkeley, CA, m\_shermanbennett@berkeley.edu}}
\begin{document}

\maketitle
\begin{abstract}
    Immanants are functions on square matrices generalizing the determinant and permanent. Kazhdan-Lusztig immanants, which are indexed by permutations, involve $q=1$ specializations of Type A Kazhdan-Lusztig polynomials, and were defined by Rhoades and Skandera in \cite{RS}. Using results of \cite{HaiConj} and \cite{StemTNN}, Rhoades and Skandera showed that Kazhdan-Lusztig immanants are nonnegative on matrices whose minors are nonnegative. We investigate which Kazhdan-Lusztig immanants are positive on $k$-positive matrices (matrices whose minors of size $k \times k$ and smaller are positive). The Kazhdan-Lusztig immanant indexed by $v$ is positive on $k$-positive matrices if $v$ avoids 1324 and 2143 and for all non-inversions $i<j$ of $v$, either $j-i \leq k$ or $v_j-v_i \leq k$. Our main tool is Lewis Carroll's identity.\\
    \noindent MSC Classifications: 15A15 (primary); 05E10, 20C30 (secondary)
\end{abstract}

\section{Introduction}\label{sec:intro}
Given a function $f:S_n \to \CC$, the \emph{immanant} associated to $f$, $\Imm_f:\Mat_{n \times n}(\CC) \to \CC$, is the function

\begin{equation}
\Imm_f(M):= \sum_{w \in S_n} f(w) ~m_{1, w(1)} \cdots m_{n, w(n)}.
\end{equation}

Well-studied examples include the determinant, where $f(w)=(-1)^{\ell(w)}$, the permanent, where $f(w)=1$, and more generally \emph{character immanants}, where $f$ is an irreducible character of $S_n$.

In this paper, we establish the positivity of certain immanants evaluated on \emph{$k$-positive} matrices; that is, matrices whose minors of size at most $k$ are positive. Interest in positivity properties of immanants goes back to the early 1990s. Goulden and Jackson \cite{GJ} conjectured, and Greene \cite{Greene} later proved, that character immanants of Jacobi-Trudi matrices are monomial-positive. Jacobi-Trudi matrices refer to the matrices appearing in the famous Jacobi-Trudi identity for skew-Schur functions; they have entries that are homogeneous symmetric functions and determinants equal to skew-Schur functions. Greene's result was followed by a number of positivity conjectures by Stembridge \cite{StemConj}, including two that were proved shortly thereafter: Haiman showed that character immanants of generalized Jacobi-Trudi matrices (repeated rows and columns allowed) are Schur-positive \cite{HaiConj} and Stembridge \cite{StemTNN} showed that character immanants are nonnegative on \emph{totally nonnegative} matrices, matrices with nonnegative minors. 

%
    

In \cite[Question 2.9]{StemConj}, Stembridge also asks about the nonnegativity of certain immanants evaluated on \emph{$k$-nonnegative matrices}, matrices whose minors of size at most $k$ are nonnegative. (Stembridge's methods in \cite{StemTNN} do not give insight into this question, as it relies on a factorization of totally nonnegative matrices which does not exist for all $k$-nonnegative matrices.) 

Here, we investigate a variant of Stembridge's question, restricting our attention to $k$-positive matrices and \emph{Kazhdan-Lusztig immanants}, which were defined by Rhoades and Skandera \cite{RS}. 

\begin{defn}
	Let $v \in S_n$. The \emph{Kazhdan-Lusztig immanant} $\Imm_v: \Mat_{n \times n}(\CC) \to \CC$ is given by 
	\begin{equation} \label{eq:immFormula1}
	\Imm_v(M):= \sum_{w \in S_n} (-1)^{\ell(w)-\ell(v)} P_{w_0w, w_0v}(1) ~m_{1, w_1} \cdots m_{n, w_n}
	\end{equation}
	where $P_{x, y}(q)$ is the Kazhdan-Lusztig polynomial associated to $x,y \in S_n$, $w_0 \in S_n$ is the longest permutation, and we write permutations $w=w_1w_2\dots w_n$ in one-line notation. (For the definition of $P_{x, y}(q)$ and their basic properties, see e.g. \cite{BB}.)
\end{defn}

For example, letting $e$ denote the identity permutation, $\Imm_{e}(M)=\det M$ and $\Imm_{w_0}(M)=m_{n, 1}m_{n-1, 2} \cdots m_{1, n}$. 

Using results of \cite{ HaiConj, StemTNN}, Rhoades and Skandera \cite{RS} show that Kazhdan-Lusztig immanants are nonnegative on totally nonnegative matrices, and are Schur-positive on generalized Jacobi-Trudi matrices. Further, they show that character immanants are nonnegative linear combinations of Kazhdan-Lusztig immanants, so from the perspective of positivity, Kazhdan-Lusztig immanants are the more fundamental object to study.

We will call an immanant \emph{$k$-positive} if it is positive on all $k$-positive matrices. We are interested in the following question.

\begin{question} \label{quest:KLImmkPos}
	Let $0<k<n$ be an integer. For which $v \in S_n$ is $\Imm_v(M)$ $k$-positive?
\end{question}

Notice that $\Imm_e(M)=\det M$ is $k$-positive only for $k=n$. On the other hand, $\Imm_{w_0}$ is $k$-positive for all $k$, since it is positive as long as the entries (i.e. the $1 \times 1$ minors) of $M$ are positive. So, the answer to \cref{quest:KLImmkPos} is a nonempty proper subset of $S_n$.

Pylyavskyy \cite{Pyl} conjectured the following relationship between $\Imm_v(M)$ being $k$-positive and $v$ avoiding certain patterns (see \cref{defn:patternAvoidance}).



\begin{conj}[\hspace{1pt}\cite{Pyl}] \label{conj:pasha}
    Let $0<k<n$ be an integer and let $v \in S_n$ avoid the pattern $12\cdots (k+1)$. 
Then $\Imm_v(M) $ is $k$-positive. 
\end{conj}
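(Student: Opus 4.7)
The plan is to prove the conjecture by induction on $\ell(w_0) - \ell(v)$, reducing $\Imm_v(M)$ at each step to an expression involving Kazhdan-Lusztig immanants $\Imm_{v'}(M)$ for $v' > v$ in Bruhat order together with minors of $M$ of size at most $k$, with all signs positive. The base case $v = w_0$ is immediate, since $\Imm_{w_0}(M) = m_{n,1}m_{n-1,2}\cdots m_{1,n}$ is positive whenever the entries of $M$ are. For the inductive step, the goal is to produce, for each $v$ avoiding $12\cdots(k+1)$, a Lewis Carroll-type identity of the form
\begin{equation*}
\Delta_{A, B}(M) \cdot \Imm_v(M) = \sum_\alpha c_\alpha \, \Delta_{A_\alpha, B_\alpha}(M) \cdot \Imm_{v_\alpha}(M),
\end{equation*}
with $v_\alpha > v$ in Bruhat order, all minors of size at most $k$, and all coefficients $c_\alpha > 0$. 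Solving for $\Imm_v(M)$ and applying the inductive hypothesis to each $\Imm_{v_\alpha}$ then yields the desired $k$-positivity, since every denominator and numerator is positive on a $k$-positive matrix.

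The pattern-avoidance condition is what forces every minor produced by the identity to have size at most $k$. Indeed, $v$ avoids $12\cdots(k+1)$ if and only if its longest increasing subsequence has length at most $k$, equivalently the first row of its RSK shape has length at most $k$. Combinatorially, this should force the natural complementary minors attached to any well-chosen ascent or cover relation $v \lessdot v_\alpha$ to have size bounded by the length of an increasing subsequence of $v$, and hence at most $k$. A parallel combinatorial lemma must verify that each $v_\alpha$ appearing in the identity itself avoids $12\cdots(k+1)$, so that the induction can proceed; this pattern-preservation step is likely the most intricate purely combinatorial component of the argument.

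The existence in principle of identities of the above shape is suggested by the fact that the Kazhdan-Lusztig immanants are, up to normalization, a dual canonical basis, so that Lusztig's positivity of structure constants guarantees positive expansions of products $\Imm_u \cdot \Imm_v$ in the same basis. The main obstacle to making the plan effective is controlling the output when the Kazhdan-Lusztig polynomials $P_{w_0 w, w_0 v}(1)$ take values greater than $1$. Under the additional hypothesis that $v$ avoids $1324$ and $2143$ --- under which the Schubert variety $X_v$ is smooth and these KL values are all $0$ or $1$ --- a direct Lewis Carroll argument on complementary minors succeeds, and this is the route of the present paper. For $v$ containing $1324$ or $2143$, singularities of $X_v$ force nontrivial KL polynomials, and the naive identity groups terms in a way that is no longer manifestly positive. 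Overcoming this will likely require either a new combinatorial model for $\Imm_v(M)$ in which $k$-positivity is manifest from the outset --- for instance, a path model on a $k$-restricted planar network generalizing Lindström-Gessel-Viennot --- or a refinement of the Lewis Carroll method that explicitly bundles and cancels contributions from singular Schubert cells; I expect the former to be the more promising route to the full conjecture.
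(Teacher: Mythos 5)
This statement is an open conjecture: the paper does not prove it, and neither do you. Your proposal is a research plan whose central step --- the existence, for every $12\cdots(k+1)$-avoiding $v$, of a Lewis Carroll-type identity expressing $\Delta_{A,B}(M)\cdot\Imm_v(M)$ as a positive combination of products $\Delta_{A_\alpha,B_\alpha}(M)\cdot\Imm_{v_\alpha}(M)$ with $v_\alpha>v$ and all minors of size at most $k$ --- is nowhere constructed. Appealing to Lusztig's positivity of structure constants does not supply it: that positivity governs expansions of products of dual canonical basis elements into the basis, whereas you need to run the identity in the opposite direction and isolate $\Imm_v$ with a positive coefficient; moreover Dodgson condensation itself carries a minus sign, so positivity of the resulting expression is exactly the thing that must be proved, not an input. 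You also concede outright that when the Kazhdan--Lusztig polynomials $P_{w_0w,w_0v}$ exceed $1$ you have no argument, so the general case is simply not addressed.

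Even your description of the known special case overstates the record. The paper does not prove the conjecture for all $1324$- and $2143$-avoiding $v$: its Theorem 1.10 additionally requires that the largest square region in $\gr{v}{w_0}$ have size $k\times k$, which is strictly stronger than avoidance of $12\cdots(k+1)$ (Proposition 4.1 gives only one implication, and Proposition 4.2 shows that for $k=2$ one must also avoid $1432$ and $3214$). The paper's mechanism is also different from what you sketch: rather than inducting on $\ell(w_0)-\ell(v)$ through Bruhat covers, it uses Sj\"ostrand's theorem to write $\Imm_v(M)=(-1)^{\ell(v)}\det(\res{M}{\gr{v}{w_0}})$ and then applies Lewis Carroll's identity to that zero-patterned matrix, inducting on $n$ by deleting rows and columns (equivalently, entries of the one-line notation), with the bounding-box combinatorics of $\gr{v}{w_0}$ controlling which minors appear. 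A correct writeup should either prove your proposed identity or clearly mark the statement as conjectural.
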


Our main result is the following description of some $k$-positive Kazhdan-Lusztig immanants, in the spirit of Pylyavskyy's conjecture. 




\begin{thm}  \label{thm:123avoiding}
	Let $v \in S_n$ be 1324-, 2143-avoiding and suppose that for all $i<j$ with $v_i<v_j$, we have $j-i \leq k$ or $v_j-v_i \leq k$. Then $\Imm_v(M)$ is $k$-positive.
\end{thm}




To prove \cref{thm:123avoiding}, we first find a determinantal expression for $\Imm_v(M)$ when $v$ avoids 1324 and 2143. We then use Lewis Carroll's identity (also known as the Desnanot-Jacobi identity) to obtain sign information. We also show that \cref{thm:123avoiding} supports \cref{conj:pasha} (see \cref{prop:inc-pattern-avoiding}).

Before giving the proof of \cref{thm:123avoiding}, we point out that it can be rephrased in terms of Lusztig's \emph{dual canonical basis} for the coordinate ring of $GL_n(\CC)$. Indeed, Skandera~\cite{Skan} proved that every dual canonical basis element is, up to a power of $\det^{-1}$, a Kazhdan-Lusztig immanant evaluated on a matrix of indeterminates with repeated rows and columns. The minors are dual canonical basis elements, so \cref{thm:123avoiding} shows that the positivity of certain dual canonical basis elements (minors of size at most $k$) guarantees the positivity of others ($\Imm_v(N)$ where $N$ is a matrix of indeterminates and $v$ satisfies the hypotheses of the theorem). In fact, the positivity of minors of size at most $k$ guarantees the positivity of a broader class of dual canonical basis elements, which will be the subject of a forthcoming paper.

The paper is organized as follows.  Section~\ref{sec:defns} gives our conventions and the necessary definitions on permutations. In Section~\ref{sec: preliminaries}, we obtain a determinantal formula for $\Imm_v(M)$ when $v$ avoids 1324 and 2143.   Section~\ref{sec:mainthm} is the proof of our main result, Theorem~\ref{thm:123avoiding}.   In Section~\ref{sec:pattern-avoidance} we consider the condition on $v$ from Theorem~\ref{thm:123avoiding}: that for all $i<j$ with $v_i<v_j$ we have $j-i\leq k$ or $v_j-v_i\leq k$. We discuss how this condition relates to pattern avoidance and show that our main theorem supports Pylyavskyy's conjecture. In \cref{sec:connectionsToCluster}, we give some additional motivation and context for \cref{quest:KLImmkPos}; we state a more general version of \cref{quest:KLImmkPos} for arbitrary semisimple Lie groups and discuss connections to cluster algebras. Finally, Sections~\ref{sec:proof1} and~\ref{sec:proof2} provide proofs of technical lemmas used in Section~\ref{sec:mainthm}.

\section{Definitions}~\label{sec:defns}

For integers $i\leq j$, let $[i, j]:=\{i, i+1, \dots, j-1, j\}$. We abbreviate $[1, n]$ as $[n]$. For $v \in S_n$, we write $v_i$ or $v(i)$ for the image of $i$ under $v$. We use the notation $<$ for both the usual order on $[n]$ and the Bruhat order on $S_n$; it is clear from context which is meant. To discuss inversions or non-inversion of a permutation $v$, we'll write $\lrangle{i, j}$ to avoid confusion with a matrix index or point in the plane. In the notation $\lrangle{i, j}$, we always assume $i<j$.

We are concerned with two notions of positivity, one for matrices and one for immanants.

\begin{defn}
Let $k\geq 1$. A matrix $M \in \Mat_{n \times n}(\CC)$ is \emph{$k$-positive} if all minors of size at most $k$ are positive. 

An immanant $\Imm_f: \Mat_{n \times n}(\CC) \to \CC$ is \emph{$k$-positive} if it is positive on all $k$-positive matrices.
\end{defn}

Note that $k$-positive matrices have positive $1 \times 1$ minors, i.e. entries, and so are real matrices. 

Our results on $k$-positivity of Kazhdan-Lusztig immanants involve pattern avoidance.

\begin{defn}\label{defn:patternAvoidance}
	Let $v \in S_n$, and let $w\in S_m$. Suppose $v=v_1\cdots v_n$ and $w=w_1 \cdots w_m$ in one-line notation. The pattern $w_1 \cdots w_m$ \emph{occurs} in $v$ if there exists $1\leq i_1< \dots <i_m\leq n$ such that $v_{i_1} \cdots v_{i_m}$ are in the same relative order as $w_1 \cdots w_m$. Additionally, $v$ \emph{avoids} the pattern $w_1 \cdots w_m$ if it does not occur in $v$.
\end{defn}

In the following section, we will show that certain immanants have a very simple determinantal formula, which involves the \emph{graph} of an interval.

\begin{defn}\label{defn:graph}
For $v \in S_n$, the \emph{graph} of $v$, denoted $\gro{v}$, refers to its graph as a function. That is, $\gro{v}:=\{(1, v_1), \dots, (n, v_n)\}$.
For $v, w \in S_n$, the graph of the Bruhat interval $[v, w]$ is the subset of $[n]^2$ defined as $\gr{v}{w}:=\{(i, u_i): u \in [v, w], i=1, \dots, n\}$.
\end{defn} 

We think of an element $(i,j) \in \gr{v}{w}$ as a point in row $i$ and column $j$ of an $n \times n$ grid, indexed so that row indices increase going down and column indices increase going right (see \cref{ex:graph}). A \emph{square} or \emph{square region} in $\gr{v}{w}$ is a subset of $\gr{v}{w}$ which forms a square when drawn in the grid.

The following example illustrates the above concepts, as well as \cref{thm:123avoiding}.

\begin{ex}\label{ex:graph}
Consider $v=2413$ in $S_4$. We have $[v, w_0]=\{2413, 4213, 3412, 2431, 4312, 4231, 3421\}$, and so $\gr{v}{w_0}$ is as follows:
\begin{center}
   \includegraphics[height=0.15\textheight]{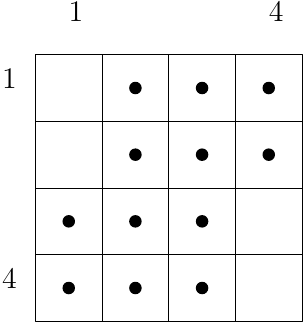}
\end{center}

Notice that $v$ avoids the patterns $1324$ and $2143$. Its non-inversions are $\lrangle{1, 2}, \lrangle{1, 4}, \lrangle{3, 4}$. For each of these non-inversions $\lrangle{i, j}$, we have that $j-i \leq 2$ or $v_j-v_i \leq 2$. So, \cref{thm:123avoiding} guarantees that
\begin{align*} \Imm_v(M)&= m_{12}m_{24}m_{31}m_{43} -m_{14}m_{22}m_{31}m_{43}- m_{13}m_{24}m_{31}m_{42}+m_{14}m_{23}m_{31}m_{42}\\
&-m_{12}m_{24}m_{33}m_{41}+m_{14}m_{22}m_{33}m_{41}+m_{13}m_{24}m_{32}m_{41}-m_{14}m_{23}m_{32}m_{41}
\end{align*}
is positive on all 2-positive $4 \times 4$ matrices.
\end{ex}

\section{Determinantal formulas for 4231- and 3412-avoiding Kazhdan-Lusztig immanants} \label{sec: preliminaries}

We first note that \cref{eq:immFormula1} has a much simpler form when $v$ is $1324$- and $2143$-avoiding. In general, computing $P_{x, y}(q)$, or even $P_{x, y}(1)$, is quite difficult, and there are no explicit combinatorial formulas for arbitrary $x, y$ (see e.g. \cite{BW} for formulas in special cases). As a result, Kazhdan--Lusztig immanants are also difficult to compute. However, when $v$ avoids $1324$ and $2143$, we can write a simple formula for them (\cref{cor:immDet}).

By \cite{KLPoincare}, $P_{x, y}(q)$ is the Poincar\'e polynomial of the local intersection cohomology of the Schubert variety indexed by $y$ at any point in the Schubert variety indexed by $x$; by \cite{LSsmoothSchub}, the Schubert variety indexed by $y$ is smooth precisely when $y$ avoids $4231$ and $3412$. These results imply that $P_{x, y}(q)=1$ for $y$ avoiding 4231 and 3412. Together with the fact that $P_{x, y}(q)=0$ for $x \nleq y$ in the Bruhat order, this gives the following lemma.

\begin{lem} \label{lem:immSimple}
	Let $v\in S_n$ be $1324$- and $2143$-avoiding. Then 
	\begin{equation} \label{eq:immSimple} \Imm_v(M)= (-1)^{\ell(v)} \sum_{w \geq v} (-1)^{\ell(w)} ~m_{1, w(1)} \cdots m_{n, w(n)}.
	\end{equation}
\end{lem}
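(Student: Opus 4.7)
The plan is to rewrite $\Imm_v(M)$ directly from its definition by using the two facts mentioned in the paragraph preceding the statement: (i) $P_{x,y}(q) = 0$ unless $x\le y$ in Bruhat order, and (ii) $P_{x,y}(q) = 1$ whenever $y$ avoids $3412$ and $4231$ (combining \cite{KLPoincare} with \cite{LSsmoothSchub}). In our formula we have $x = w_0 w$ and $y = w_0 v$, so I will need to translate the hypothesis ``$v$ avoids $1324$ and $2143$'' into the statement ``$w_0 v$ avoids $3412$ and $4231$,'' and translate the condition $w_0 w \le w_0 v$ into the condition $w \ge v$.

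First I would handle the Bruhat condition. Since left multiplication by $w_0$ reverses the Bruhat order on $S_n$, $w_0 w \le w_0 v$ is equivalent to $w \ge v$. Hence $P_{w_0 w, w_0 v}(1) = 0$ whenever $w \not\ge v$, so only terms with $w \ge v$ can contribute to \cref{eq:immFormula1}.

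Next I would verify the pattern translation. In one-line notation, $w_0 v = (n+1 - v_1)(n+1 - v_2)\cdots(n+1 - v_n)$, i.e., the complement of $v$. Complementation is an involution on $S_n$ that sends a pattern $p_1\cdots p_k$ to its complement $(k+1-p_1)\cdots(k+1-p_k)$; applied to $1324$ this yields $4231$, and applied to $2143$ this yields $3412$. Therefore $v$ avoids $1324$ and $2143$ if and only if $w_0 v$ avoids $4231$ and $3412$. By the cited results, this gives $P_{w_0 w, w_0 v}(1) = 1$ for every $w \ge v$.

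Combining the two points, the sum in \cref{eq:immFormula1} collapses to
\[
\Imm_v(M) = \sum_{w \ge v} (-1)^{\ell(w) - \ell(v)}\, m_{1, w(1)} \cdots m_{n, w(n)},
\]
and factoring $(-1)^{\ell(v)}$ out of the sign (using $(-1)^{\ell(w)-\ell(v)} = (-1)^{\ell(v)}(-1)^{\ell(w)}$) yields \cref{eq:immSimple}. The argument is essentially a bookkeeping exercise on top of the two cited theorems; the only substantive step is the complement bijection showing that the pattern avoidance hypothesis on $v$ is exactly what is needed for smoothness of the Schubert variety indexed by $w_0 v$, and this is a short direct check.
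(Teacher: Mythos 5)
Your proof is correct and follows exactly the argument sketched in the paragraph preceding the lemma in the paper: the paper does not give a formal proof, but your reasoning (vanishing of $P_{w_0w,w_0v}$ unless $w\ge v$ via the Bruhat anti-automorphism, and $P_{w_0w,w_0v}=1$ via the complementation check that $1324\mapsto 4231$ and $2143\mapsto 3412$) is precisely the bookkeeping the paper leaves implicit.
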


The coefficients in the formula in \cref{lem:immSimple} suggest a strategy for analyzing $\Imm_v(M)$ for $v\in S_n$ avoiding $1324$ and $2143$: find some matrix $N$ such that $\det(N)=\pm \Imm_v(M)$. If such a matrix $N$ exists, the sign of $\Imm_v(M)$ is the sign of some determinant, which we have tools (e.g. Lewis Carroll's identity) to analyze. The most straightforward candidate for $N$ is a matrix obtained from $M$ by replacing some entries with $0$.

\begin{defn}
	Let $Q \subseteq[n]^2$ and let $M=(m_{ij})$ be in $\Mat_{n \times n}(\CC)$. The \emph{restriction of }$M$ \emph{to } $Q$, denoted $\res{M}{Q}$, is the matrix with entries 
	\[n_{ij}= \begin{cases}
	m_{ij} & \text{if } (i, j) \in Q\\
	0 & \text{else}.
	\end{cases}
	\]
\end{defn}

For a fixed $v \in S_n$ that avoids $1324$ and $2143$, suppose there exists $Q \subseteq[n]^2$ such that $\Imm_v(M)=\pm \det \res{M}{Q}$. Given the terms appearing in \cref{eq:immSimple}, $Q$ must contain $\gro{w}$ for all $w$ in $[v, w_0]$, and so must contain $\gr{v}{w_0}$. In fact, the minimal choice of $Q$ suffices. Before proving this, we give a characterization of $\gr{v}{w_0}$ as a subset of $[n]^2$.

\begin{defn}
Let $v \in S_n$ and $(i, j) \in [n]^2 \setminus \gro{v}$. Then $(i, j)$ is \emph{sandwiched} by a non-inversion (respectively, inversion) $\lrangle{k, l}$ if $k \leq i \leq l$ and $v_k \leq j \leq v_l$ (respectively, $v_k \geq j \geq v_l$). We also say $\lrangle{k, l}$ \emph{sandwiches} $(i, j)$.
\end{defn}

That is to say, $(i, j)$ is sandwiched by $\lrangle{k, l}$ if and only if, in the plane, $(i, j)$ lies inside the rectangle with opposite corners $(k, v_k)$ and $(l, v_l)$.

\begin{lem} \label{lem:graphCharacterization}
Let $v\in S_n$. Then $\gr{v}{w_0}=\gro{v} \cup \{(i, j): (i, j) \text{ is sandwiched by a non-inversion of } v\}$.
\end{lem}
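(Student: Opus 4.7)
The plan is to prove the two inclusions separately, using the standard rank-function characterization of Bruhat order: $v \leq u$ iff for all $a, b \in [n]$, $|\{c \leq a : v(c) \leq b\}| \geq |\{c \leq a : u(c) \leq b\}|$, equivalently $|\{c > a : v(c) > b\}| \geq |\{c > a : u(c) > b\}|$.

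For the inclusion $\supseteq$, the containment $\gro{v} \subseteq \gr{v}{w_0}$ is immediate from $v \in [v, w_0]$. For a point $(i, j)$ sandwiched by a non-inversion $\lrangle{k, l}$, set $m := v^{-1}(j)$ and construct $u \geq v$ with $u(i) = j$ by at most two Bruhat-increasing transpositions. If the pair $\{i, m\}$ is itself a non-inversion of $v$ (i.e., the relative order of $i$ and $m$ agrees with that of $v_i$ and $j = v_m$), a single swap of positions $i$ and $m$ produces the desired $u$. In the opposite case---illustrated by $i > m$ and $v_i < j$---the sandwich provides an $l \geq i$ with $v_l \geq j$, and $v_l = j$ would force $l = m < i$, so in fact $v_l > j$; then the consecutive swaps of positions $\{m, l\}$ followed by $\{i, l\}$ are both non-inversions and yield $u(i) = j$. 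The remaining subcase $i < m$, $v_i > j$ is symmetric, using $k$ from the sandwich in place of $l$.

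For the inclusion $\subseteq$, suppose $(i, j) \in \gr{v}{w_0}$ with $v_i \neq j$, so some $u \geq v$ has $u(i) = j$; again let $m := v^{-1}(j)$. If $\{i, m\}$ is already a non-inversion of $v$, then whichever of $\lrangle{i, m}$ or $\lrangle{m, i}$ is well-defined sandwiches $(i, j)$ on the nose. The main obstacle is the opposite case, in which local data at $i$ and $m$ gives only an inversion of $v$ and the Bruhat hypothesis $u \geq v$ must be invoked globally; we illustrate with $i > m$ and $v_i < j$. We claim there exists $l \geq i$ with $v_l > j$: otherwise, since $v_m = j$ and $m < i$ force $v_l \neq j$ for all $l \geq i$, we would have $v_l < j$ for every such $l$, so $|\{c \geq i : v(c) \geq j\}| = 0$; but $u(i) = j$ forces $|\{c \geq i : u(c) \geq j\}| \geq 1$, contradicting the rank-function characterization at $(a, b) = (i-1, j-1)$. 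Such an $l$ produces the non-inversion $\lrangle{m, l}$ sandwiching $(i, j)$. The symmetric subcase $i < m$, $v_i > j$ yields a $k \leq i$ with $v_k < j$ by an analogous rank comparison at $(a, b) = (i, j)$, giving the non-inversion $\lrangle{k, m}$.
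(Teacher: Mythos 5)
Your proof of the containment $\supseteq$ is essentially the paper's: pick $m = v^{-1}(j)$, and either one transposition (if $\{i,m\}$ is a non-inversion) or two (using a sandwiching endpoint) produces $u > v$ with $u(i)=j$. Your two swaps $(m\ l)$ then $(i\ l)$ compose to the same permutation as the paper's $(i\ l)$ then $(i\ m)$, so the difference there is cosmetic; you just verify the length-increasing property a bit more explicitly.

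The containment $\subseteq$ is where you diverge genuinely. The paper argues by ``walking up'' the Bruhat order: it observes that each covering step $v' \lessdot v'(a\ b)$ replaces two corners of a rectangle with the opposite corners, both sandwiched by the non-inversion $\langle a, b\rangle$ of $v'$, and that sandwiching by a non-inversion of $v'$ implies sandwiching by one of $v$; iterating reaches any $u \geq v$. This is a local, inductive argument along a saturated chain. You instead invoke the global rank-function (tableau) criterion for Bruhat order once: if $u \geq v$, $u(i)=j$, $m = v^{-1}(j)$, and $\{i,m\}$ is an inversion of $v$, you compare dot counts in the appropriate quadrant to exhibit a third dot $(l, v_l)$ (or $(k, v_k)$) that, paired with $m$, gives a non-inversion sandwiching $(i,j)$. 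Your argument is non-inductive and produces the sandwiching non-inversion directly from $u$ and $v$ without tracking the chain between them; it does rely on the rank-function characterization, which is a heavier (if entirely standard) input than the covering-relation description the paper uses. Both are correct, and both are short; the paper's route is slightly more self-contained, while yours is arguably cleaner because it sidesteps the need to justify that sandwiching is preserved under each Bruhat cover.
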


\begin{figure} 
	\centering
	\includegraphics[height=1.5in]{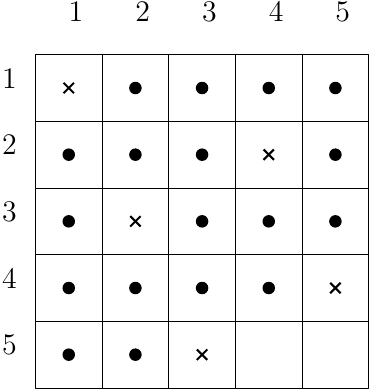}
	\caption{\label{fig:graph} An example of $\gr{v}{w_0}$ for $v=1 4 2 5 3$. Crosses mark the positions $(i, v_i)$, and dots mark all other elements of $\gr{v}{w_0}$.}
\end{figure}

\begin{proof}
Clearly $(i, v_i) \in \gr{v}{w_0}$ for all $i$, so suppose $(i, j)$ is sandwiched by a non-inversion $\lrangle{k, l}$ of $v$. We will produce a permutation $w>v$ sending $i$ to $j$, which shows that $(i, j) \in \gr{v}{w_0}$. Let $a=v^{-1}(j)$. If $a, i$ form a non-inversion of $v$, then $v\cdot (a~i)>v$ (where $(a~i)$ is the transposition sending $a$ to $i$ and vice versa) and $v(a~i)$ sends $i$ to $j$. If $a, i$ is an inversion, then $v \cdot (i~k) \cdot(i~a)>v$ if $i<a$ and $v\cdot (i~l) \cdot (i~a)>v$ if $i>a$. These permutations send $i$ to $j$, so we are done.

To show that the above description gives all elements of $ \gr{v}{w_0}$, suppose that $\lrangle{a, b}$ is a non-inversion of $v$ such that $\ell(v(a~b))=\ell(v)+1$. The graph of $v \cdot (a~ b)$ can be obtained from $\gro{v}$ by applying the following move: look at the rectangle bounded by $(a, v_a)$ and $(b, v_b)$, and replace $(a, v_a)$ and $(b, v_b)$ with the other corners of the rectangle, $(a, v_b)$ and $(b, v_a)$. Notice that $(a, v_b)$ and $(b, v_a)$ are sandwiched by the non-inversion $\lrangle{a, b}$. Further, if $(i, j)$ is sandwiched by a non-inversion of $v \cdot (a~b)$, then it is also sandwiched by a non-inversion of $v$. 
 Thus repeating this move produces graphs whose points are sandwiched by some non-inversion of $v$. Since for arbitrary $u>v$, the graph of $u$ can be obtained from that of $v$ by a sequence of these moves, we are done.
\end{proof}
Note that \cref{lem:graphCharacterization} implies $\gr{v}{w_0}$ is always a skew-shape. 

We are now ready to prove the following proposition, which follows from work of Sj\"{o}strand \cite{Sjo}. 

\begin{prop} \label{prop:fullIntervals}
	Let $v \in S_n$ avoid 1324, 24153, 31524, and 426153, and let $M\in \Mat_{n \times n}(\CC)$. Then 
	\[ \det(\res{M}{\gr{v}{w_0}})=\sum_{w \geq v} (-1)^{\ell(w)}~m_{1, w(1)} \cdots m_{n, w(n)}.
	\]
\end{prop}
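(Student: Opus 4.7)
The plan is to unfold the left-hand side by the Leibniz formula and match terms with the right-hand side. Writing $N := \res{M}{\gr{v}{w_0}}$, one has
\[ \det(N) = \sum_{w \in S_n} (-1)^{\ell(w)} n_{1,w(1)} \cdots n_{n,w(n)}, \]
and the summand for $w$ equals $(-1)^{\ell(w)} m_{1,w(1)} \cdots m_{n,w(n)}$ precisely when $(i, w(i)) \in \gr{v}{w_0}$ for all $i$, i.e.\ when $\gro{w} \subseteq \gr{v}{w_0}$; otherwise it vanishes because some factor is zero. Comparing with the right-hand side, the proposition reduces to the set-theoretic identity
\[ \{w \in S_n : \gro{w} \subseteq \gr{v}{w_0}\} = [v, w_0]. \]

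The inclusion $\supseteq$ is immediate from the definition of $\gr{v}{w_0}$ as the union of $\gro{u}$ over $u \in [v, w_0]$. The crux is the reverse inclusion: that a permutation whose graph fits inside $\gr{v}{w_0}$ must in fact lie above $v$ in Bruhat order. This is exactly the content of Sj\"ostrand's work \cite{Sjo}. Sj\"ostrand views permutations as non-attacking rook placements on subsets of $[n]^2$ and characterizes those $v$ for which the set of rook placements on the "right hull" board coincides with the Bruhat interval $[v, w_0]$; using \cref{lem:graphCharacterization} one identifies this board with $\gr{v}{w_0}$, and Sj\"ostrand's characterization is precisely avoidance of the four patterns $1324$, $24153$, $31524$, $421653$.

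The principal obstacle is of course the reverse inclusion — all the combinatorial difficulty lives there, and the cleanest dispatch is a direct appeal to \cite{Sjo}. If instead one wanted a self-contained argument, the natural route would be a contrapositive induction on $\ell(w)$: assuming $\gro{w} \subseteq \gr{v}{w_0}$ but $w \not\geq v$, one would use the sandwiching description from \cref{lem:graphCharacterization} to locate rows and columns witnessing the failure, and then extract from $v$ a forbidden configuration of four, five, or six indices realizing one of the listed patterns, contradicting the hypothesis. However, this combinatorial extraction is exactly what \cite{Sjo} already carries out in detail, so the most efficient write-up is simply to set up the Leibniz expansion, translate the board via \cref{lem:graphCharacterization}, and invoke Sj\"ostrand's theorem.
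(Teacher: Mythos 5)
Your proposal is correct and follows the same route as the paper: expand $\det(\res{M}{\gr{v}{w_0}})$ via Leibniz, reduce the proposition to the set-theoretic equality $\{w : \gro{w} \subseteq \gr{v}{w_0}\} = [v, w_0]$, identify $\gr{v}{w_0}$ with Sj\"ostrand's right convex hull via Lemma~\ref{lem:graphCharacterization}, and invoke \cite[Theorem 4]{Sjo} for the pattern-avoidance characterization. This is precisely how the paper proves it, so no changes are needed.
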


\begin{proof}
Notice that by definition, 
\begin{equation}
\det(\res{M}{\gr{v}{w_0}})= \sum_{\substack{w \in S_n \\ \gro{w}\subseteq \gr{v}{w_0}}} (-1)^{\ell(w)}~m_{1, w(1)} \cdots m_{n, w(n)}.
\end{equation}

We show that this sum is in fact over $[v, w_0]$. \cref{lem:graphCharacterization} shows that for arbitrary $v \in S_n$, $\{w: \gro{w} \subseteq \gr{v}{w_0}\}$ are the permutations in what Sj\"{o}strand calls the ``left convex hull" of $v$. Applying \cite[Theorem 4]{Sjo}, we obtain that $\{w \in S_n: \gro{w} \subseteq \gr{v}{w_0}\}=[v, w_0]$ as desired.
\end{proof}

Combining \cref{lem:immSimple} and \cref{prop:fullIntervals}, we obtain an expression for certain immanants as determinants (up to sign).

\begin{cor} \label{cor:immDet} Let $v \in S_n$ avoid 1324 and 2143. Then

\begin{equation}\label{eq:immDet}
\Imm_v(M)=(-1)^{\ell(v)} \det(\res{M}{\gr{v}{w_0}}).
\end{equation}

\end{cor}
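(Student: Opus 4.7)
The proof is almost entirely a matter of combining the two preceding results, \cref{lem:immSimple} and \cref{prop:fullIntervals}. From \cref{lem:immSimple}, for $v$ avoiding 1324 and 2143 we have
\[
\Imm_v(M) = (-1)^{\ell(v)} \sum_{w \geq v} (-1)^{\ell(w)}\, m_{1, w(1)} \cdots m_{n, w(n)}.
\]
Meanwhile, \cref{prop:fullIntervals} rewrites the sum on the right as $\det(\res{M}{\gr{v}{w_0}})$, provided $v$ avoids the four patterns 1324, 24153, 31524, and 421653. So the only real content in the proof is verifying that the hypothesis of \cref{cor:immDet}---namely, that $v$ avoids 1324 and 2143---implies the hypothesis of \cref{prop:fullIntervals}.

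The plan, therefore, is to check that each of the three additional patterns 24153, 31524, and 421653 contains 2143 as a sub-pattern. This is a short case-by-case inspection: in 24153, the entries in positions $1,3,4,5$ read $2,1,5,3$, whose relative order is $2143$; in 31524, positions $1,2,3,5$ read $3,1,5,4$, whose relative order is $2143$; and in 421653, positions $1,3,4,5$ read $4,1,6,5$, whose relative order is $2143$. Consequently any permutation avoiding 2143 automatically avoids all three of these larger patterns, so a 1324- and 2143-avoiding permutation satisfies the hypotheses of \cref{prop:fullIntervals}.

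Once this pattern-containment observation is in place, combining the two displayed formulas gives
\[
\Imm_v(M) = (-1)^{\ell(v)} \det(\res{M}{\gr{v}{w_0}}),
\]
which is the claimed identity. I do not anticipate any genuine obstacle---the lemma and proposition were set up precisely to produce this corollary---so the only thing worth stating carefully is the pattern-containment check, since without it the reader might wonder why 2143-avoidance is enough to invoke \cref{prop:fullIntervals}.
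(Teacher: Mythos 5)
Your proof is correct and takes exactly the route the paper intends: combine \cref{lem:immSimple} with \cref{prop:fullIntervals}, bridging the two hypotheses by observing that any 2143-avoiding permutation automatically avoids 24153, 31524, and 421653 since each of those contains 2143 as a sub-pattern. The paper itself only says ``Combining \cref{lem:immSimple} and \cref{prop:fullIntervals}'' and leaves the pattern-containment check implicit; you have supplied it explicitly and your three verifications are all accurate.
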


\begin{proof}
By \cref{lem:immSimple}, we have 
\begin{equation}\label{eq:immSimple2} \Imm_v(M)= (-1)^{\ell(v)} \sum_{w \geq v} (-1)^{\ell(w)} ~m_{1, w(1)} \cdots m_{n, w(n)}.
\end{equation}

Notice that $v$ also avoids 24153, 31524 and 426153, since the occurrence of any of those patterns would imply an occurrence of 2143. So by \cref{prop:fullIntervals}, the right-hand side of \cref{eq:immSimple2} is equal to $(-1)^{\ell(v)} \det(\res{M}{\gr{v}{w_0}})$.

\end{proof}

\begin{rmk}
\cref{cor:immDet} implies that $\Imm_v(M)$ can be efficiently computed when $v$ avoids 1324 and 2143, since it is a determinant.
\end{rmk}

We will use Lewis Carroll's identity to determine the sign of \cref{eq:immDet} in \cref{sec:mainthm}, using some results on the structure of $
\gr{v}{w_0}$.

\section{Dodgson Condensation} \label{sec:mainthm}

We are now ready to prove \cref{thm:123avoiding}.  We first note a useful lemma that will allow us to rephrase the theorem in terms of $\gr{v}{w_0}$ instead of non-inversions.

\begin{lem}\label{lem:sq-inversions}
Let $v\in S_n$. The graph $\gr{v}{w_0}$ has a square of size $k+1$ if and only if for some non-inversion $\lrangle{i, j}$ of $v$, we have $j-i \geq k$ and $v_j-v_i \geq k$.
\end{lem}

\begin{proof}
Let $Q$ be a square of size $k+1$ in $\gr{v}{w_0}$. Note that $Q$ is sandwiched by some non-inversion $\lrangle{i, j}$ of $v$. Indeed, let $(r_1, c_1)$ be the northwest corner of $Q$ and let $(r_2, c_2)$ be the southeast corner. Then $(r_1, c_1)$ is either sandwiched by a non-inversion $\lrangle{i, \ell}$ or is equal to $(i, v_i)$ for some $i$. In particular, $(i, v_i)$ is weakly northwest of $(r_1, c_1)$. Similarly, we can find $j$ so that $(j, v_j)$ is weakly southeast of $(r_2, c_2)$. So the non-inversion $\lrangle{i, j}$ sandwiches every point in $Q$. On the other hand, the rectangle with corners $(i, v_i), (j, v_i), (i, v_j), (j, v_j)$ uses $j-i+1$ columns and $v_j-v_i+1$ rows. So both $j-i+1$ and $v_j-v_i+1$ must be at least $k+1$, which shows one direction. The other direction is straightforward.
\end{proof}

Using this lemma, Theorem~\ref{thm:123avoiding} can be rephrased as follows.

\begin{thm}\label{thm:old}
Let $v \in S_n$ be 1324-, 2143-avoiding and suppose the largest square region in $\gr{v}{w_0}$ has size at most $k$. Then $\Imm_v(M)$ is $k$-positive. 
\end{thm}

The main technique we will use to prove \cref{thm:123avoiding} is application of the following:

\begin{prop}[Lewis Carroll's Identity]\label{prop:lc-id}
If $M$ is an $n\times n$ square matrix and $M_A^B$ is $M$ with the rows indexed by $A \subset [n]$ and columns indexed by $B \subset [n]$ removed, then 
$$\det(M)\det(M_{a,a'}^{b,b'})=\det(M_a^b)\det(M_{a'}^{b'})-\det(M_a^{b'})\det(M_{a'}^b),$$
where $1\leq a<a'\leq n$ and $1\leq b<b'\leq n$.
\end{prop}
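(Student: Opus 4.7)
The plan is to derive the identity from Jacobi's classical complementary minor identity applied to the adjugate matrix. Recall that the adjugate $\operatorname{adj}(M)$ has entries $(\operatorname{adj}(M))_{ij} = (-1)^{i+j}\det(M_j^i)$ and satisfies $M\cdot\operatorname{adj}(M) = \det(M)I_n$. Jacobi's identity asserts that for any $I, J \subseteq [n]$ with $|I|=|J|=k$,
\[
\det\bigl(\operatorname{adj}(M)[I,J]\bigr) = (-1)^{\sigma(I)+\sigma(J)}\det(M)^{k-1}\det(M_J^I),
\]
where $\sigma(S) := \sum_{s\in S} s$ and $\operatorname{adj}(M)[I,J]$ denotes the submatrix with rows $I$ and columns $J$.

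Specializing to $k=2$, $I=\{b,b'\}$, $J=\{a,a'\}$, the right-hand side of Jacobi becomes $(-1)^{a+a'+b+b'}\det(M)\det(M_{a,a'}^{b,b'})$. On the other hand, I would expand the $2\times 2$ determinant on the left directly as
\[
(\operatorname{adj}(M))_{ba}(\operatorname{adj}(M))_{b'a'} - (\operatorname{adj}(M))_{ba'}(\operatorname{adj}(M))_{b'a},
\]
and substitute the entry formula for $\operatorname{adj}(M)$; a common factor of $(-1)^{a+a'+b+b'}$ then emerges from all four minors. Cancelling this sign produces
\[
\det(M_a^b)\det(M_{a'}^{b'}) - \det(M_a^{b'})\det(M_{a'}^b) = \det(M)\det(M_{a,a'}^{b,b'}),
\]
which is exactly the claim. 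The singular case $\det(M)=0$ follows from a standard density argument, since both sides are polynomials in the entries of $M$ and agree on the Zariski-open locus where $M$ is invertible.

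The main obstacle is sign bookkeeping, both in Jacobi's identity and in expanding the $2\times 2$ adjugate minor; one small simplification would be to first reduce via simultaneous row/column permutations to the case $a=b=1$, $a'=b'=n$, where the signs collapse. If one prefers not to cite Jacobi as a black box, the cleanest self-contained route is to combine $M\cdot\operatorname{adj}(M)=\det(M)I_n$ with the Cauchy--Binet formula applied to this product, or equivalently to establish the complementary minor formula for $M^{-1}$ directly by block row/column operations, carefully tracking the sign that arises from reordering basis vectors. With Jacobi in hand, the remainder is a one-line computation.
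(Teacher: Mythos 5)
The paper does not prove this proposition: Lewis Carroll's Identity (the Desnanot--Jacobi identity) is stated without proof as a classical fact and then used as a black box throughout Section~3. So there is no ``paper's own proof'' to compare against.

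That said, your derivation is correct and is one of the standard routes to the identity. Jacobi's complementary minor theorem, $\det\bigl(\operatorname{adj}(M)[I,J]\bigr) = (-1)^{\sigma(I)+\sigma(J)}\det(M)^{k-1}\det(M_J^I)$, specialized to $k=2$ with $I=\{b,b'\}$, $J=\{a,a'\}$, does give $(-1)^{a+a'+b+b'}\det(M)\det(M_{a,a'}^{b,b'})$ on one side, and expanding the $2\times 2$ adjugate determinant on the other side yields the same overall sign $(-1)^{a+a'+b+b'}$ multiplying $\det(M_a^b)\det(M_{a'}^{b'})-\det(M_a^{b'})\det(M_{a'}^b)$, because $(-1)^{(b+a)+(b'+a')}=(-1)^{(b+a')+(b'+a)}$. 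Cancellation gives exactly the claimed identity, and the Zariski-density argument legitimately extends it to $\det(M)=0$. The one caveat is that this proof is only as self-contained as your proof of Jacobi's theorem; if the goal were a proof from first principles, you would still need to establish Jacobi's identity (for instance via the block-matrix/Schur-complement argument you sketch at the end, or by a Cauchy--Binet computation on $M\cdot\operatorname{adj}(M)=\det(M)I_n$). As written, the proposal correctly reduces Lewis Carroll to Jacobi and carries out the $k=2$ specialization with the signs handled properly.
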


\subsection{Young diagrams}

We begin by considering the cases where $\gr{v}{w_0}$ is a Young diagram or the complement of a Young diagram (using English notation). Recall that the \emph{Durfee square} of a Young diagram $\lambda$ is the largest square contained in $\lambda$.


\begin{prop} \label{prop:partitiondet}
Let $\lambda \subseteq n^n$ be a Young diagram with Durfee square of size $k$ and $\mu:=n^n/\lambda$. Let $M$ be an $n \times n$ $k$-positive matrix. Then 

\[(-1)^{|\mu|}\det (\res{M}{\lambda}) \geq 0
\]
and equality holds only if $(n, n-1, \dots, 1) \nsubseteq \lambda$.
\end{prop}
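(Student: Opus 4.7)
The plan is to prove the result by strong induction on the ambient size $n$, using Lewis Carroll's identity (\cref{prop:lc-id}) as the principal tool. The base case $n=1$ is immediate: either $\lambda = \{(1,1)\}$ with $\det M = m_{11} > 0$ by $1$-positivity, or $\lambda = \emptyset$ with $\det = 0$ and the staircase $(1)$ not contained.

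For the inductive step, I would first observe that if $\lambda$ does not contain the staircase $(n,n-1,\ldots,1)$, then by a Hall's marriage–type argument no permutation $w$ can satisfy $w(i) \leq \lambda_i$ for every $i$, so every term in the permutation expansion of $\det(\res{M}{\lambda})$ vanishes and the determinant is zero; this disposes of the equality case. So assume henceforth that $\lambda \supseteq (n,n-1,\ldots,1)$, which forces $\lambda_1 = n$, $\lambda_n \geq 1$, and $n \leq 2k$, and the goal becomes strict positivity.

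Now apply Lewis Carroll to $\res{M}{\lambda}$ with $(a,a',b,b')=(1,n,1,n)$. Each of the five resulting submatrices can be rewritten as $\res{M^X}{\lambda^X}$ for a submatrix $M^X$ of $M$ and a Young diagram $\lambda^X$ inside the appropriate smaller ambient square; for instance $(\res{M}{\lambda})_1^1$ has shape $\lambda^{UL} = (\lambda_2 - 1, \ldots, \lambda_n - 1)$, and the other four are computed similarly. A routine check confirms each $\lambda^X$ is a genuine Young diagram with Durfee size $\leq k$ that contains its own staircase (the latter uses $\lambda \supseteq (n,n-1,\ldots,1)$). Since submatrices inherit $k$-positivity, the inductive hypothesis applies in its strict form: writing $\det(\res{M^X}{\lambda^X}) = (-1)^{|\mu^X|} p_X$, each $p_X > 0$ for $X \in \{UL, BR, UR, BL, IN\}$.

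The final step is a parity computation. Using $\lambda_1 = n$, one computes $|\mu^{UL}| = |\mu|$ directly and checks that $|\mu^{UL}|+|\mu^{BR}|$ and $|\mu^{UR}|+|\mu^{BL}|$ differ by $1 \pmod 2$, so after substitution the two products on the right-hand side of Lewis Carroll combine constructively rather than cancelling. Rearranging yields
\[
(-1)^{|\mu|}\det(\res{M}{\lambda}) \;=\; \frac{p_{UL}\,p_{BR} + p_{UR}\,p_{BL}}{p_{IN}} \;>\; 0,
\]
where $p_{IN} > 0$ because $\lambda^{IN}$ always contains its staircase. The main obstacle is the parity bookkeeping: one must verify that the four corner contributions enter with signs that reinforce rather than cancel, which is where the assumption $\lambda_1 = n$ and the Durfee bound are both essentially used. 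The shape identifications for each $\lambda^X$ and the inheritance of $k$-positivity by submatrices are routine unpackings of definitions.
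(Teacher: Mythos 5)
Your approach is essentially the same as the paper's: the same application of Lewis Carroll's identity with rows $1,n$ and columns $1,n$, the same identification of the five submatrices as restrictions to smaller Young diagrams, and the same parity bookkeeping $|\mu^{UL}|=|\mu|$, $|\mu^{UL}|+|\mu^{BR}| = 2|\mu|-r-c+1$, $|\mu^{UR}|+|\mu^{BL}|=2|\mu|-r-c$ showing the two products reinforce.

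There is one genuine slip. You claim that every $\lambda^X$ contains its own staircase, and in particular that $p_{UL}>0$. That is false for $\lambda^{UL}=(\lambda_2-1,\dots,\lambda_n-1)$: the staircase condition $\lambda_{i+1}-1\geq n-i$ requires $\lambda_{i+1}\geq n-i+1$, which the hypothesis $\lambda\supseteq(n,n-1,\dots,1)$ does \emph{not} provide (for example $\lambda=(n,n-1,\dots,1)$ itself gives $\lambda^{UL}=(n-2,n-3,\dots,0)$, which misses the staircase). So $p_{UL}$ can vanish and the strict form of the inductive hypothesis does not apply to that submatrix. The argument survives anyway, because the non-strict inductive hypothesis still gives $p_{UL}\geq 0$, while $\lambda^{UR}$, $\lambda^{BL}$, $\lambda^{BR}$, $\lambda^{IN}$ genuinely do contain their staircases (you should verify this using $\lambda_i\geq n-i+1$; it is a strict inequality one row over, which is exactly what you need), so $p_{UR}p_{BL}>0$ and hence $p_{UL}p_{BR}+p_{UR}p_{BL}>0$. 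This is precisely how the paper handles it: it explicitly allows $\det(A_1^1)$ to be zero and observes the right-hand side is still nonzero with the correct sign. You should correct your intermediate claim accordingly. A second, minor point: the Durfee bound is not actually used in the parity computation; its only role is to ensure each $\lambda^X$ has Durfee size $\leq k$ so the inductive hypothesis applies to the $k$-positive submatrices $M^X$.
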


\begin{proof}
Let $A=\res{M}{\lambda}=\{a_{ij}\}$. For $\sigma \in S_n$, let $a_{\sigma}:=a_{1, \sigma(1)}\cdots a_{n, \sigma(n)}$. Note that if $\lambda_{n-j+1}<j$, then the last $j$ rows of $\lambda$ are contained in a $j \times (j-1)$ rectangle. There is no way to choose $j$ boxes in the last $j$ rows of $\lambda$ so that each box is in a different column and row. This means at least one of the matrix entries in $a_{\sigma}$ is zero. So $\det A=0$ if $(n, n-1, \dots, 1) \nsubseteq \lambda$.

Now, assume that $(n, n-1, \dots,1) \subseteq \lambda$. We proceed by induction on $n$ to show that $\det(A)$ has sign $(-1)^{|\mu|}$. The base cases for $n=1, 2$ are clear.

We would like to apply Lewis Carroll's identity to find the sign of $\det(A)$. Let $\lambda_I^J$ denote the Young diagram obtained from $\lambda$ by removing rows indexed by $I$ and columns indexed by $J$. Note that $A_I^J=\res{M_I^J}{\lambda_I^J}$. The submatrices $M_I^J$ are $k$-positive and the Durfee square of $\lambda_I^J$ is no bigger than the Durfee square of $\lambda$, so by the inductive hypothesis, we know the signs of $\det A_I^J$ for $|I|=|J|\geq 1$.

We will analyze the following Lewis Carroll identity:

\begin{equation}
\label{eqn:partitionid} 
\det(A) \det(A_{1, n}^{1, n})=\det(A_1^1)\det(A_n^n)-\det(A_1^n)\det(A_n^1).
\end{equation}

Note that $\lambda_{1, n}^{1, n}$ contains $(n-2, n-3, \dots, 1)$ and $\lambda_n^n, \lambda_1^n, \lambda_n^1$ contain $(n-1, n-2, \dots, 1)$, so the determinants of $A_{1, n}^{1, n}$, $A_n^n$, $A_1^n$, and $A_n^1$ are nonzero.

Suppose the last row of $\mu$ contains $r$ boxes and the last column contains $c$ (so that the union of the last column and row contains $r+c-1$ boxes). Note that $r, c<n$. Then $\det(A_{1, n}^{1, n})$ and $\det(A_n^n)$ have sign $(-1)^{|\mu|-(r+c-1)}$, $\det(A_1^n)$ has sign $(-1)^{|\mu|-c}$, and $\det(A_n^1)$ has sign $(-1)^{|\mu|-r}$. Notice that $\det(A_1^1)$ is either zero or it has sign $(-1)^{|\mu|}$, since $\mu_1^1=\mu$. In both of these cases, the right hand side of \cref{eqn:partitionid} is nonzero and has sign $(-1)^{-r-c+1}$; the left hand side has sign $\sgn(\det(A))\cdot(-1)^{|\mu|-r-c+1}$, which gives the proposition.

\end{proof}

\begin{cor}\label{cor:partitioncompdet}
Let $\mu \subseteq n^n$ be a Young diagram and let $\lambda:= n^n /\mu$. Suppose $\lambda$ has Durfee square of size $k$, and $M$ is a $k$-positive $n \times n$ matrix. Then 

\[ (-1)^{|\mu|}\det(\res{M}{\lambda}) \geq 0
\]
and equality holds if and only if $ (n^n/ (n-1, n-2, \dots, 1, 0) ) \subseteq \lambda$ (or equivalently, $\mu \subseteq (n-1, n-2, \dots, 1, 0)$).
\end{cor}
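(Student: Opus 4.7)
The plan is to reduce Corollary~\ref{cor:partitioncompdet} to Proposition~\ref{prop:partitiondet} by a $180^\circ$ rotation. Define $M' \in \Mat_{n\times n}(\CC)$ by $m'_{ij}:=m_{n+1-i,\,n+1-j}$; equivalently $M' = JMJ$, where $J$ is the $n\times n$ anti-diagonal permutation matrix. Conjugation by $J$ simply relabels rows and columns by $i \mapsto n+1-i$, so every $r\times r$ submatrix of $M'$ is obtained from some $r\times r$ submatrix of $M$ by reversing both its rows and its columns; the two reversals contribute signs whose product is $+1$, so $M'$ has exactly the same collection of minors as $M$. In particular, $M'$ is $k$-positive.

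Next, apply the same rotation to the region: set $\tilde\lambda := \{(n+1-i,\,n+1-j) : (i,j)\in\lambda\}$ and $\tilde\mu := n^n/\tilde\lambda$. Writing $\mu=(\mu_1,\dots,\mu_n)$, a direct computation gives $\tilde\lambda_i = n - \mu_{n+1-i}$, which is weakly decreasing since $\mu$ is; hence $\tilde\lambda$ is a Young diagram contained in $n^n$, and a sum shows $|\tilde\mu| = |\mu|$. Because $180^\circ$ rotation bijects $k\times k$ subsquares of $\lambda$ with $k\times k$ subsquares of $\tilde\lambda$, the Durfee square of $\tilde\lambda$ has size exactly $k$. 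Finally, $\res{M'}{\tilde\lambda}$ is itself the $180^\circ$ rotation of $\res{M}{\lambda}$, so the two matrices have equal determinants.

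Applying Proposition~\ref{prop:partitiondet} to $M'$ and $\tilde\lambda$ now yields $(-1)^{|\tilde\mu|}\det(\res{M'}{\tilde\lambda})\geq 0$, which is exactly $(-1)^{|\mu|}\det(\res{M}{\lambda})\geq 0$. For the equality clause, Proposition~\ref{prop:partitiondet} characterizes equality by the condition $(n,n-1,\dots,1)\not\subseteq \tilde\lambda$; unrotating sends the staircase $(n,n-1,\dots,1)$ in the upper-left to the skew region $n^n/(n-1,n-2,\dots,1,0)$ in the lower-right, translating the criterion to the condition on $\lambda$ (equivalently on $\mu$) given in the statement.

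The main thing to verify carefully is that each of the combinatorial quantities in sight---Young-diagram shape, complement size, Durfee-square size, and staircase containment---transforms correctly under $180^\circ$ rotation, but each of these is immediate once one draws the picture, so I do not anticipate any serious obstacle; the real work was already done in Proposition~\ref{prop:partitiondet}.
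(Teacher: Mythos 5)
Correct, and essentially the same approach as the paper: reduce to Proposition~\ref{prop:partitiondet} via a single minor-preserving dihedral symmetry of the $n\times n$ grid. The paper uses transposition across the antidiagonal ($M\mapsto JM^TJ$) rather than your $180^\circ$ rotation ($M\mapsto JMJ$), but the two resulting Young diagrams are conjugates of one another, and Durfee-square size, $|\mu|$, and staircase containment are all invariant under that conjugation, so the arguments are interchangeable.
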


\begin{proof}
If we transpose $\res{M}{\lambda}$ across the antidiagonal, we obtain the scenario of \cref{prop:partitiondet}. Transposition across the antidiagonal is the same as reversing columns, taking transpose, and reversing columns again, which doesn't effect the sign of minors.
\end{proof}

\cref{prop:partitiondet} and \cref{cor:partitioncompdet} give us the following results about immanants.

\begin{cor}\label{cor:partitionimm}
Let $v \in S_n$ avoid 1324 and 2143. Suppose $\gr{v}{w_0}$ is a Young diagram $\lambda$ with Durfee square of size $k$. Then $\Imm_v(M)$ is $k$-positive.
\end{cor}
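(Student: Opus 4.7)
The plan is to combine \cref{cor:immDet} with \cref{prop:partitiondet}. Since $v$ avoids $1324$ and $2143$, \cref{cor:immDet} gives
\[\Imm_v(M)=(-1)^{\ell(v)}\det\bigl(\res{M}{\lambda}\bigr).\]
Because $M$ is $k$-positive and $\lambda$ has Durfee square of size $k$, \cref{prop:partitiondet} applies and yields $(-1)^{|\mu|}\det(\res{M}{\lambda})\geq 0$, with equality only if $\lambda$ fails to contain the staircase $(n,n-1,\dots,1)$.

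Next I would rule out the equality case. Because $w_0\geq v$ in the Bruhat order, $\gro{w_0}\subseteq\gr{v}{w_0}=\lambda$. But $\gro{w_0}$ is precisely the antidiagonal $\{(i,n+1-i):1\leq i\leq n\}$, so $\lambda_i\geq n+1-i$ for every $i$; equivalently, the staircase is contained in $\lambda$. Consequently, $(-1)^{|\mu|}\det(\res{M}{\lambda})>0$ strictly for every $k$-positive $M$.

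It remains only to check that $(-1)^{\ell(v)}$ and $(-1)^{|\mu|}$ agree. The cleanest way is to invoke the Rhoades--Skandera result that $\Imm_v$ is nonnegative on every totally nonnegative matrix. Any totally positive matrix is in particular $k$-positive, so plugging such a matrix into the identity $\Imm_v(M)=(-1)^{\ell(v)}\det(\res{M}{\lambda})$ together with the previous strict inequality forces $\ell(v)\equiv |\mu|\pmod{2}$. Therefore, for every $k$-positive $M$,
\[\Imm_v(M)=(-1)^{|\mu|}\det(\res{M}{\lambda})>0,\]
as desired.

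The only real obstacle is this last parity-matching step. Appealing to the Rhoades--Skandera nonnegativity on totally nonnegative matrices disposes of it immediately; a more intrinsic alternative would be to verify directly that $\ell(v)=|\mu|$ whenever $\gr{v}{w_0}$ is a Young diagram (small examples such as $v=132,\,1342,\,1432$ all satisfy this, and indeed suggest the clean description $\lambda_i=\max_{j\geq i}v_j$ in this setting, from which $|\mu|=\sum_i(n-\lambda_i)=\ell(v)$ should follow by a direct count).
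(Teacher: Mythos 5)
Your proof is correct, and it diverges from the paper's in one interesting place: how you match the sign $(-1)^{\ell(v)}$ from \cref{cor:immDet} with the sign $(-1)^{|\mu|}$ from \cref{prop:partitiondet}. The paper does this intrinsically: it exhibits an explicit bijection between boxes of $\mu$ and inversions of $v$ (a box of $\mu$ in row $r$, column $c$ satisfies $v(r)<c$ and $v^{-1}(c)<r$, so $\lrangle{v^{-1}(c),r}$ is an inversion; conversely every inversion gives a box of $\mu$, using 1324-avoidance to show the box is not in $\lambda$), which yields the exact equality $\ell(v)=|\mu|$. You instead establish the strict sign of the determinant first and then invoke the Rhoades--Skandera nonnegativity of $\Imm_v$ on totally nonnegative (hence totally positive) matrices to force $\ell(v)\equiv|\mu|\pmod 2$. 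This is a clean shortcut and is logically sound here, since a totally positive matrix is $k$-positive and the determinant was already shown nonzero, so $\Imm_v(M)\neq 0$ and the parity is pinned down. The tradeoff is that the paper's version is self-contained and gives the stronger numerical identity $\ell(v)=|\mu|$ (which is then reused verbatim in the proof of \cref{cor:partitioncompimm}), whereas yours imports an external black box. Your sketched alternative --- proving $\lambda_i=\max_{j\geq i}v_j$ when $\gr{v}{w_0}$ is a Young diagram and counting --- is essentially the same computation the paper carries out, just in box-by-box rather than row-by-row form; if you want the intrinsic version, that bijection is the thing to write down.
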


\begin{proof} Suppose $M$ is $k$-positive.  Note that $\gro{w_0} \subseteq \gr{v}{w_0}$ implies $\lambda$ contains the partition $(n, n-1, \dots, 1)$. Let $\mu=n^n/\lambda$.  By \cref{prop:partitiondet}, we know that $(-1)^{|\mu|}\det \res{M}{\gr{v}{w_0}}>0$.

In fact, there is a bijection between boxes of $\mu$ and inversions of $v$. If a box of $\mu$ is in row $r$ and column $c$, then $v(r)<c$ and $v^{-1}(c)<r$; otherwise, that box would be in $\gr{v}{w_0}$. This means exactly that $(v^{-1}(c),r)$ is an inversion. If $(a, b)$ was an inversion of $v$ and the box in row $b$ and column $v(a)$ was not in $\mu$, then for some $j$, the box in row $j$ and column $v(j)$ is southeast of the box in row $b$ and column $v(a)$. But then $1 ~v(a)~v(b)~v(j)$ would be an occurrence of the pattern 1324, a contradiction.

So, we know $(-1)^{|\mu|}\det \res{M}{\gr{v}{w_0}}=(-1)^{\ell(v)}\det \res{M}{\gr{v}{w_0}}>0$.  By \cref{cor:immDet}, this means $\Imm_v(M)>0$.
\end{proof}

\begin{cor}\label{cor:partitioncompimm}
Let $v \in S_n$ avoid 1324 and 2143. Suppose $\gr{v}{w_0}$ is $\lambda=n^n/\mu$ for some partition $\mu$ and the largest square in $\lambda$ is of size $k$. Then $\Imm_v(M)$ is $k$-positive.
\end{cor}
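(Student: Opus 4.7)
The strategy is to mirror the proof of \cref{cor:partitionimm}: by \cref{cor:immDet}, $\Imm_v(M) = (-1)^{\ell(v)} \det(\res{M}{\gr{v}{w_0}})$, so it suffices to determine the sign of this determinant via \cref{cor:partitioncompdet} and match it with $(-1)^{\ell(v)}$ through a bijection between boxes of $\mu$ and inversions of $v$.

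Write $\lambda = \gr{v}{w_0} = n^n/\mu$. As the complement of a Young diagram, $\lambda$ is closed under moving right and down in $[n]^2$. Since $\gro{w_0} \subseteq \lambda$ places the antidiagonal in $\lambda$, this closure forces $\lambda \supseteq \{(i, j) : i+j \geq n+1\} = n^n/(n-1, n-2, \dots, 1, 0)$. Applying \cref{cor:partitioncompdet} then yields $(-1)^{|\mu|} \det(\res{M}{\lambda}) > 0$.

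Next I would build a bijection between the boxes of $\mu$ and the inversions of $v$. For $(r, c) \in \mu$, applying rightward and downward closure of $\lambda$ to the points $(r, v(r))$ and $(v^{-1}(c), c)$ of $\gro{v} \subseteq \lambda$ forces $v(r) > c$ and $v^{-1}(c) > r$, so $\lrangle{r, v^{-1}(c)}$ is an inversion of $v$. The box-to-inversion map $(r, c) \mapsto \lrangle{r, v^{-1}(c)}$ is injective, with candidate inverse $\lrangle{a, b} \mapsto (a, v(b))$.

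The main obstacle is showing the candidate inverse lands in $\mu$. I would first observe that the hypothesis forces $v(n) = n$: by \cref{lem:graphCharacterization}, any non-inversion sandwich of a row-$n$ cell must use a pair $\lrangle{k, n}$, so row $n$ of $\lambda$ is $[1, v(n)]$, and agreement with the complement-of-Young-diagram form $[\mu_n+1, n]$ forces $v(n) = n$. Now suppose $(a, v(b)) \in \lambda$ for some inversion $\lrangle{a, b}$. The analogous rowwise description---row $a$ of $\lambda$ equals $[\min_{k \leq a} v(k), \max_{l \geq a} v(l)]$---supplies some $k \leq a$ with $v(k) \leq v(b)$. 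The case $k = a$ contradicts $v(a) > v(b)$, so $k < a$ and $v(k) < v(b)$; moreover $b < n$, since otherwise $v(b) = n > v(a)$. Then positions $k < a < b < n$ carry values $v(k) < v(b) < v(a) < v(n) = n$, a $1324$ pattern contradicting the hypothesis on $v$. Hence $|\mu| = \ell(v)$, and combining the steps yields $\Imm_v(M) = (-1)^{\ell(v)+|\mu|} |\det(\res{M}{\lambda})| = |\det(\res{M}{\lambda})| > 0$.
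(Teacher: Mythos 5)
Your proposal is correct and follows the same overall route as the paper: invoke \cref{cor:immDet} to reduce to a sign computation of $\det(\res{M}{\gr{v}{w_0}})$, apply \cref{cor:partitioncompdet} to get the sign $(-1)^{|\mu|}$, and identify $|\mu|=\ell(v)$ via a bijection between boxes of $\mu$ and inversions of $v$. The paper's own proof is terser, saying only ``as in the proof of \cref{cor:partitionimm}, there is a bijection'' without re-deriving it; your write-up actually spells out the required adaptation (here $\mu$ sits in the upper-left rather than the lower-right, so the box-to-inversion map becomes $(r,c)\mapsto\lrangle{r,v^{-1}(c)}$ and the 1324 pattern is assembled using $v(n)=n$ as the ``4''), and your rowwise description of $\gr{v}{w_0}$ via $[\min_{k\le a}v(k),\,\max_{l\ge a}v(l)]$ cleanly justifies both $v(n)=n$ and surjectivity. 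This is a more complete version of the same argument, not a different one.
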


\begin{proof}
Suppose $M$ is $k$-positive.  Note that $\gro{w_0} \subseteq \gr{v}{w_0}$ implies $\lambda$ contains the partition $(n^n/(n-1, n-2, \dots, 1,0))$. By \cref{cor:partitioncompdet}, we know that $(-1)^{|\mu|}\det \res{M}{\gr{v}{w_0}}>0$.

As in the proof of \cref{cor:partitionimm}, there is a bijection between boxes of $\mu$ and inversions of $v$.  So, we know $(-1)^{|\mu|}\det \res{M}{\gr{v}{w_0}}=(-1)^{\ell(v)}\det \res{M}{\gr{v}{w_0}}>0$.  By \cref{cor:immDet}, this means $\Imm_v(M)>0$.
\end{proof}

\subsection{General case}

To prove \cref{thm:123avoiding}, we need to show that if $v$ avoids 1324 and 2143, then $\det\res{M}{\gr{v}{w_0}}$ has sign $(-1)^{\ell(v)}$ for $M$ satisfying the given positivity assumptions.

We first reduce to the case when $\gr{v}{w_0}$ is not block-antidiagonal. We will temporarily denote the longest element of $S_j$ by $w_{(j)}$.

\begin{lem} \label{lem:multipleblock}

Suppose $\gr{v}{w_0}$ is block-antidiagonal. Let $v_1 \in S_j$ and $v_2 \in S_{n-j}$ be permutations such that upper-right antidiagonal block of $\gr{v}{w_0}$ is equal to $\gr{v_1}{w_{(j)}}$ and the other antidiagonal block is equal to $\gr{v_2}{w_{(n-j)}}$. If $M$ is an $n\times n$ matrix, then

\[(-1)^{\ell(v)}\det \res{M}{\gr{v}{w_0}}=(-1)^{\ell(v_1)}\det \res{M_1}{\gr{v_1} {w_{(j)}}} \cdot (-1)^{\ell(v_2)}\det \res{M_2}{\gr{v_2} {w_{(n-j)}}}
\]
where $M_1$ (resp. $M_2$) is the square submatrix of $M$ using columns $n-j+1$ through $n$ and rows $1$ through $j$ (resp. columns $1$ through $n-j$ and rows $j+1$ through $n$).
\end{lem}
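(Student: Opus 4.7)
My plan is to prove the two claims separately: first the block-antidiagonal shape of $\gr{v}{w_0}$, then the signed determinant identity. For the shape, I would begin by observing that the hypothesis $w_0v \in S_j \times S_{n-j}$ forces $v(\{1,\ldots,j\}) = \{n-j+1,\ldots,n\}$ and $v(\{j+1,\ldots,n\}) = \{1,\ldots,n-j\}$, since $w_0$ reverses the standard order. Thus $\gro{v}$ already lies in the upper-right $j\times j$ block (rows $[1,j]$, columns $[n-j+1,n]$) and the lower-left $(n-j)\times(n-j)$ block. To see that the rest of $\gr{v}{w_0}$ also lies there, I would invoke \cref{lem:graphCharacterization}: any pair $\lrangle{i,k}$ with $i \leq j < k$ satisfies $v(i) \geq n-j+1 > n-j \geq v(k)$, so is an inversion rather than a non-inversion. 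Hence every non-inversion of $v$, together with its sandwich rectangle, stays within one of the two blocks.

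Next I would identify the two blocks with $\gr{v_1}{w_{(j)}}$ and $\gr{v_2}{w_{(n-j)}}$ for the natural choices $v_1(i) := v(i)-(n-j)$ on $[1,j]$ and $v_2(i) := v(i+j)$ on $[1,n-j]$. Non-inversions of $v$ with both indices in $[1,j]$ (respectively both in $[j+1,n]$) correspond bijectively to non-inversions of $v_1$ (respectively $v_2$), and the sandwich condition is local to a block, so applying \cref{lem:graphCharacterization} to $v_1$ and $v_2$ gives the matching of the two blocks (after the obvious index shift for the upper-right block).

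For the determinant identity I would use the block-antidiagonal formula: cyclically moving the top $j$ rows past the remaining $n-j$ rows converts $\res{M}{\gr{v}{w_0}}$ into a block-diagonal matrix at the cost of a $(-1)^{j(n-j)}$ sign, giving
\[
\det \res{M}{\gr{v}{w_0}} = (-1)^{j(n-j)}\det \res{M_1}{\gr{v_1}{w_{(j)}}} \cdot \det \res{M_2}{\gr{v_2}{w_{(n-j)}}}.
\]
To convert this into the signed statement of the lemma, I would decompose the inversions of $v$ into those inside each block and those crossing between them. The argument in the first paragraph shows that every crossing pair is an inversion, so there are exactly $j(n-j)$ such crossings; hence $\ell(v) = \ell(v_1)+\ell(v_2)+j(n-j)$. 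Multiplying the displayed equation by $(-1)^{\ell(v)}$ then causes the two copies of $(-1)^{j(n-j)}$ to cancel, leaving precisely the factorization asserted by the lemma.

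There is no real obstacle here; the only thing to watch is the column-index bookkeeping when translating the upper-right block to the graph $\gr{v_1}{w_{(j)}}$ of a permutation in $S_j$. The essential geometric input is \cref{lem:graphCharacterization}, which is what rules out graph points lying outside the two blocks.
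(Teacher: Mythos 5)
Your proposal is correct and follows essentially the same route as the paper's: both arguments first observe that $w_0v \in S_j\times S_{n-j}$ forces $v([j]) = [n-j+1,n]$, then invoke \cref{lem:graphCharacterization} (all cross-block pairs are inversions, so sandwiching stays within a block) to obtain the block-antidiagonal structure, and finally combine the block-determinant sign with the decomposition $\ell(v)=\ell(v_1)+\ell(v_2)+j(n-j)$. The only cosmetic difference is in how the sign $(-1)^{j(n-j)}$ is produced: you cyclically shift the top $j$ rows to the bottom, while the paper right-multiplies by $w_0$ and then by $w_{(j)}, w_{(n-j)}$ to reverse columns, arriving at $(-1)^{\binom{n}{2}+\binom{j}{2}+\binom{n-j}{2}}=(-1)^{j(n-j)}$; both are valid and equivalent.
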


See \cref{fig:blockAntiDiagEx} for an example of $v_1$ and $v_2$.

\begin{proof}[Proof of \cref{lem:multipleblock}]

Notice that $v$ is the permutation

\[ v: i \mapsto 
\begin{cases} v_1(i)+ n-j& \text{ if } 1 \leq i \leq j\\
v_2(i-j) & \text{ if } j< i \leq n.
\end{cases}
\]


For a block-antidiagonal matrix $A$ with blocks $A_1, A_2$ of size $j$ and $n-j$, respectively, we have 
\begin{align*}\det(A)&=(-1)^{\ell(w_0)}\det(Aw_0)\\
&= (-1)^{\ell(w_0)}  \det(A_1 w_{(j)})~ \det (A_2 w_{(n-j)})\\
&= (-1)^{\ell(w_0)+ \ell(w_{(j)})+\ell(w_{(n-j})} \det(A_1) \det(A_2)\\
&=(-1)^{\binom{n}{2}+\binom{j}{2}+\binom{n-j}{2}} \det(A_1) \det(A_2).
\end{align*}

From the above description of $v$, $\ell(v)= j(n-j)+\ell(v_1)+\ell(v_2)$. This, together with the fact that $\binom{n}{2}=\binom{j}{2}+\binom{n-j}{2}+j(n-j)$ and the formula for the determinant of a block-antidiagonal matrix, gives the desired equality.
\end{proof}

\begin{figure}
    \centering
    \includegraphics[height=0.3\textheight]{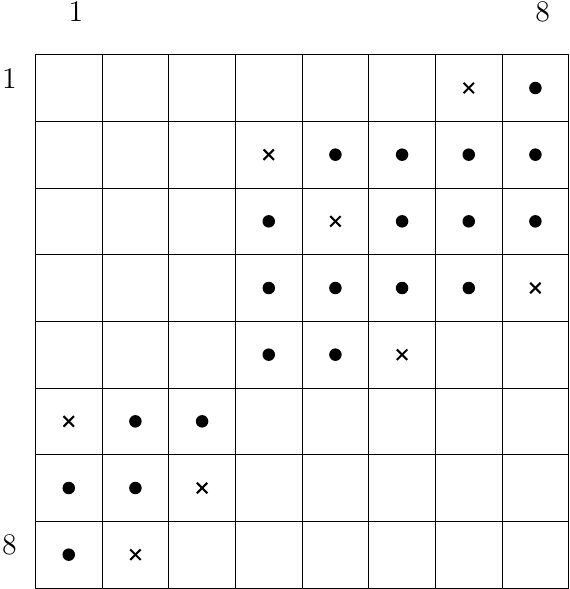}
    \caption{An example where $\gr{v}{w_0}$ is block-antidiagonal. Here, $v=74586132$. In the notation of \cref{lem:multipleblock}, $j=3$, $v_1=41253$, and $v_2=132$.}
    \label{fig:blockAntiDiagEx}
\end{figure}

From \cref{cor:immDet} and \cref{lem:multipleblock}, we have the following corollary. 

\begin{cor} \label{lem:ImmMultipleBlock}
Let $v \in S_n$ avoid 1324 and 2143, let $M$ be an $n\times n$ matrix, and let $v_i$, and $M_i$ be as in \cref{lem:multipleblock}. Then 
\[\Imm_v(M)=\Imm_{v_1}(M_1)\Imm_{v_2}(M_2).
\]
\end{cor}

We now introduce two propositions we will need for the proof of the general case.

\begin{defn}\label{defn:bounding-box-anti}
Let $v \in S_n$.  Define $\bbox{i, v_i}$ to be the square region of $[n]^2$ with corners $(i,v_i),\ (i, n-i+1),\ (n-v_i+1, v_i)$ and $(n-v_i+1,n-i+1)$.  In other words, $\bbox{i, v_i}$ is the square region of $[n]^2$ with a corner at $(i,v_i)$ and 2 corners on the antidiagonal of $[n]^2$. We say $\bbox{i, v_i}$ is a \emph{bounding box} of $\gr{v}{w_0}$ if there does not exist some $j$ such that $\bbox{i,v_i}\subsetneq \bbox{j,v_j}$. If $\bbox{i, v_i}$ is a bounding box of $\gr{v}{w_0}$, we call $(i, v_i)$ a \emph{spanning corner} of $\gr{v}{w_0}$. We denote the set of spanning corners of $\gr{v}{w_0}$ by $S$. (See \cref{fig:boundingBoxEx} for an example.)
\end{defn}

\begin{figure}
    \centering
    \includegraphics[height=0.35\textheight]{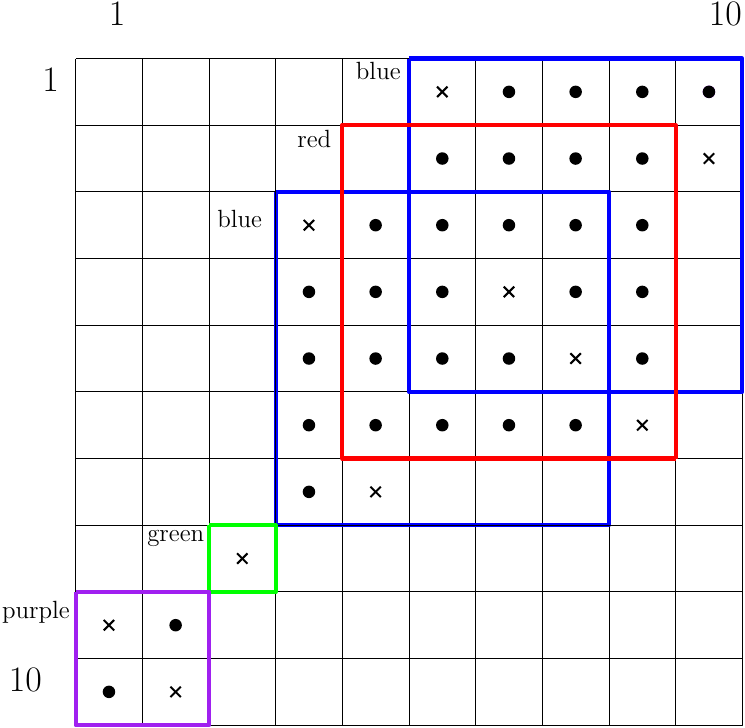}
    \caption{An example of $\gr{v}{w_0}$, with $v=6~10~4~7~8~9~3~1~2$. The bounding boxes are blue, red, blue, green, and purple, listed in the order of their northmost row. The spanning corners of $\gr{e}{v}$ are $(1, 6)$, $(3, 4)$, $(6, 9)$, $(8, 3)$, $(9, 1)$, and $(10, 2)$.}
    \label{fig:boundingBoxEx}
\end{figure}

\begin{rmk} \label{rmk:corners}
To justify the name ``spanning corners," notice that if $(i, v_i)$ is not sandwiched by any non-inversion of $v$, then $(i, v_i)$ is a corner of $\gr{v}{w_0}$ (i.e. there are either no elements of $\gr{v}{w_0}$ weakly northwest of $(i, v_i)$ or no elements of $\gr{v}{w_0}$ weakly southeast of $(i, v_i)$). Conversely, if $(i, v_i)$ is sandwiched by a non-inversion $\lrangle{k, l}$ of $v$, then $(i,v_i)$ is in the interior of $\gr{v}{w_0}$, and $\bbox{i, v_i} \subseteq \bbox{k, v_k}$. So all elements of $S$ are corners of $\gr{v}{w_0}$.
\end{rmk}

The name ``bounding boxes" comes from the following lemma.

\begin{lem} \label{lem:boundingboxes}
Let $v \in S_n$. Then
\[ \gr{v}{w_0} \subseteq \bigcup_{(i, v_i) \in S} \bbox{i, v_i}.
\]
\end{lem}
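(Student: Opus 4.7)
My plan is to prove the lemma by reducing, via Lemma \ref{lem:graphCharacterization}, to two facts: (a) for every index $i$, $\bbox{i, v_i}$ is contained in $\bbox{j, v_j}$ for some $(j, v_j) \in S$; and (b) every point $(a, b) \in \gr{v}{w_0} \setminus \gro{v}$ lies in $\bbox{k, v_k} \cup \bbox{l, v_l}$, where $\lrangle{k, l}$ is any non-inversion sandwiching $(a, b)$. Granted these, every point of $\gr{v}{w_0}$ is covered: points of $\gro{v}$ directly via (a), and the remaining points via (b) followed by (a) applied to $k$ and to $l$.

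Fact (a) is essentially a restatement of the definition of $S$: if $\bbox{i, v_i}$ is not already maximal among the bounding boxes, the definition produces a strictly larger $\bbox{j, v_j}$, and iterating this finitely many times terminates at a spanning corner. So the only real content is fact (b).

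Fact (b) is the step I expect to be the main obstacle, because the explicit description of $\bbox{i, v_i}$ as a rectangle depends on which side of the antidiagonal $(i, v_i)$ lies on. My plan is to case-split on whether $a + b \leq n + 1$ or $a + b > n + 1$. In the first case, combining the hypothesis with $a \geq k$ and $b \geq v_k$ forces $k + v_k \leq n + 1$, so $(k, v_k)$ is weakly above the antidiagonal and $\bbox{k, v_k}$ takes the explicit form $[k, n - v_k + 1] \times [v_k, n - k + 1]$; the inequalities $a + b \leq n + 1$, $a \geq k$, $b \geq v_k$ then immediately place $(a, b)$ in this rectangle. The second case is symmetric: $a \leq l$, $b \leq v_l$, and $a + b > n + 1$ together force $l + v_l > n + 1$, so $(l, v_l)$ is strictly below the antidiagonal with $\bbox{l, v_l} = [n - v_l + 1, l] \times [n - l + 1, v_l]$, and the same inequalities place $(a, b)$ inside. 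The key point of the case split is precisely to ensure that the chosen endpoint lies on the same side of the antidiagonal as $(a, b)$, so that its bounding box has an explicit rectangular description that visibly contains $(a, b)$.
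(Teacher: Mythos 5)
Your proof is correct and follows essentially the same strategy as the paper's: reduce via Lemma~\ref{lem:graphCharacterization} to showing that each sandwiching rectangle $R_{k,l}$ is contained in $\bbox{k,v_k}\cup\bbox{l,v_l}$, then pass to spanning corners by maximality. The paper's version works rectangle-by-rectangle, includes an (unnecessary) preliminary reduction to the case where the endpoints $(k,v_k),(l,v_l)$ are corners of $\gr{v}{w_0}$, and dismisses the rectangle containment with ``working through the possible relative orders of $k,l,v_k,v_l$,'' which implicitly invites a four-way case split on which sides of the antidiagonal the two endpoints lie. Your version instead works point-by-point and cases on the position of the point $(a,b)$ itself relative to the antidiagonal; this is a genuine streamlining, since it collapses to two cases, and the case assumption automatically forces the relevant endpoint onto the same side of the antidiagonal, making the rectangle description of the bounding box available with no further fuss. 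The inequalities you write out are correct (in Case 1, $a+b\le n+1$ together with $a\ge k$ and $b\ge v_k$ give all four bounds for $\bbox{k,v_k}$; the second case is symmetric). So this is the same proof at heart, but your case split is the tidier way to organize the computation the paper leaves to the reader.
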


\begin{proof}

Let $R_{k, l}$ denote the rectangle with corners $(k, v_k), (l, v_l), (k, v_l),$ and $(l, v_k)$. If $\lrangle{k, l}$ is a non-inversion of $v$, then $R_{k, l}$ consists exactly of the points sandwiched by $\lrangle{k, l}$. So by \cref{lem:graphCharacterization}, we have that

\[
\gr{e}{w}=\bigcup_{\lrangle{k, l} \text{ non-inversion of } v} R_{k, l}.
\]

Notice that if $(i, w_i)$ is sandwiched by $\lrangle{k, l}$, then $ R_{i, l}$ and $R_{k, i}$ are contained in $R_{k, l}$. So to show the desired containment, it suffices to show that $R_{k, l}$ is contained in $\bigcup_{\wire{i}{v_i} \in S} \bbox{i, v_i}$ for all non-inversions $\lrangle{k, l}$ such that $(k, v_k)$ and $(l, v_l)$ are both corners of $\gr{v}{w_0}$.

So consider a non-inversion $\lrangle{k, l}$ where $\wire{k}{v_k}, \wire{l}{v_l}$ are corners. Working through the possible relative orders of $k, l, v_k, v_l$, one can see that $R_{k, l} \subseteq \bbox{k, v_k} \cup \bbox{l, v_l}$. Since $S$ consists of spanning corners, we have $\wire{a}{v_a}, \wire{b}{v_{b}} \in S$ such that $\bbox{a, v_a}$ and $\bbox{b, v_b}$ contain $\bbox{k, w_k}$ and $\bbox{l, w_{l}}$, respectively. So $R_{k, l} \subseteq \bbox{a, w_a} \cup \bbox{b, w_b}$, as desired. 
\end{proof}

We also color the bounding boxes.

\begin{defn}
 A bounding box $\bbox{i,v_i}$ is said to be \emph{red} if $(i,v_i)$ is below the antidiagonal, \emph{green} if $(i,v_i)$ is on the antidiagonal, and \emph{blue} if $(i,v_i)$ is above the antidiagonal.  In the case where $\bbox{i,v_i}$ is a bounding box and $\bbox{n-v_i+1,n-i+1}$ is also a bounding box, then $\bbox{i,v_i}=\bbox{n-v_i+1,n-i+1}$ is both red and blue, in which case we call it \emph{purple}. (See \cref{fig:boundingBoxEx} for an example.)
\end{defn}

\begin{prop} \label{prop:alternatingBoxesAnti}
Suppose $v \in S_n$ avoids 2143 and $w_0v$ is not contained in a maximal parabolic subgroup of $S_n$. Order the bounding boxes of $\gr{v}{w_0}$ by the row of the northwest corner. If $\gr{v}{w_0}$ has more than one bounding box, then they alternate between blue and red and there are no purple bounding boxes.
\end{prop}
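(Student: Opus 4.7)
The proof proceeds in four steps: (a) consecutive bounding boxes overlap on the antidiagonal, (b) no green bounding boxes, (c) no purple bounding boxes, and (d) consecutive non-purple boxes alternate between red and blue. Steps (a) and (b) use only the hypothesis $w_0v \notin$ max parabolic, while (c) and (d) additionally invoke 2143-avoidance.

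For step (a), let $[a_k, b_k]$ denote the row span of $B_k$. The spanning (antichain) condition gives $a_1 < \dots < a_t$ and $b_1 < \dots < b_t$. If $b_k < a_{k+1}$, then by \cref{lem:boundingboxes} and the fact that every row contains a position of $v$, the bounding boxes split into two antidiagonally-opposite blocks, forcing $v(\{1, \dots, b_k\}) = \{n-b_k+1, \dots, n\}$ and hence $w_0v \in S_{b_k} \times S_{n-b_k}$, a contradiction. So $a_{k+1} \leq b_k$. For step (b), a green spanning corner $(p, n-p+1)$ has a singleton box, and for this to be maximal, no $j < p$ can have $v_j \leq n-p+1$ and no $j > p$ can have $v_j \geq n-p+1$. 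By cardinality, $v(\{1,\dots,p\}) = \{n-p+1, \dots, n\}$, again a contradiction.

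For step (c), let $B_k$ be purple with spanning corners $(i', v_{i'})$ and $(i, v_i)$ satisfying $i' < i$, $v_{i'} < v_i$, and $i' + v_i = i + v_{i'} = n+1$. By the multi-box hypothesis, a neighbor exists; WLOG take $B_{k+1}$. By (a) and (b) it is blue, red, or purple. In the blue subcase, the spanning corner $(q, v_q)$ of $B_{k+1}$ satisfies $i' < q < i$ with $v_q < v_{i'} < v_i$; 2143-avoidance forbids any $l > i$ with $v_l \in (v_{i'}, v_i)$ (which together with the SE-corner property of $(i, v_i)$ from \cref{rmk:corners} gives $v_l \leq v_{i'}$ for all $l > i$), whence $v(\{1, \dots, i\}) = \{n-i+1, \dots, n\}$, contradicting the hypothesis. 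The red subcase is analogous, with the spanning corner of $B_{k+1}$ satisfying $q > i$ and $v_{i'} < v_q < v_i$, and using 2143-avoidance plus the NW-corner property of $(i', v_{i'})$. In the purple subcase, the four antidiagonal corners of $B_k$ and $B_{k+1}$ directly form a 2143 pattern, contradicting 2143-avoidance.

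Step (d) is similar: for two consecutive non-purple same-colored boxes --- say both blue with NW spanning corners $(p, v_p), (q, v_q)$, $p < q \leq n-v_p+1$, $v_p > v_q$ --- I use 2143-avoidance on 4-tuples involving $(p, q)$ and positions after $q$ (the only viable 2143-extension of the inversion pattern $(p, q)$ is via positions $c < d$ with $c, d > q$ and $v_p < v_d < v_c$), combined with the NW-corner properties of $(p, v_p), (q, v_q)$ and the spanning condition on $B_{k+1}$, to force the max-parabolic condition $v(\{1, \dots, b_k\}) = \{n-b_k+1, \dots, n\}$, contradicting the hypothesis. The both-red case follows by the rotational symmetry $v \mapsto w_0 v w_0$, which preserves 2143-avoidance and the max-parabolic hypothesis while swapping red and blue. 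The main obstacle is carrying out the case analyses in steps (c) and (d) in detail, particularly identifying the fourth position of $v$ in each subcase that produces either a 2143 pattern or the max-parabolic configuration; the key tools are the NW/SE-corner property of spanning corners (from \cref{rmk:corners}), cardinality arguments on values and positions, and iterative application of 2143-avoidance.
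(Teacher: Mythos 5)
Your approach is genuinely different from the paper's. The paper proves this by passing to $\gr{e}{w_0v}$ (so $3412$-avoidance replaces $2143$-avoidance), reducing via a deletion argument (Proposition~\ref{prop:deleteinternaldots}) to the fully-commutative case ($321$- and $3412$-avoiding), and then arguing with reduced words via Tenner's criterion (Proposition~\ref{prop:Tenner}). You instead argue directly with $v$, $2143$-avoidance, and the max-parabolic hypothesis. Your steps (a) and (b) are correct and use only the non-parabolic hypothesis, and your step (c) handling of the purple-neighbor subcase (the four spanning corners of two consecutive purple boxes directly form a $2143$) and the blue-neighbor subcase are sound: in the latter, $2143$-avoidance rules out $v_l \in (v_{i'}, v_i)$ for $l>i$ via the pattern at positions $i' < q < i < l$, the SE-corner property of $(i,v_i)$ rules out $v_l > v_i$, and a cardinality count gives $v(\{1,\dots,i\}) = \{n-i+1,\dots,n\}$. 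The red subcase is dismissed as ``analogous,'' but the direct analogue (using the NW-corner property of $(i',v_{i'})$ plus $2143$-avoidance on $a<i'<i<q$) only yields $v_a > v_q$ for $a < i'$, which is not immediately a parabolic condition; you would be on firmer ground invoking the $180^\circ$-rotation $v \mapsto w_0vw_0$, which you use only in step (d).

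Step (d) is a genuine gap, and you acknowledge as much. The claimed mechanism --- that $2143$-avoidance together with NW-corner and spanning properties ``forces'' $v(\{1,\dots,b_k\}) = \{n-b_k+1,\dots,n\}$ where $b_k = n - v_p + 1$ --- cannot be carried out literally: under the both-blue setup one has $q \le b_k$ and $v_q < v_p = n-b_k+1$, so that parabolic identity is already incompatible with the data you start from. That is not logically fatal to a reductio, but it strongly suggests the intended contradiction is not actually going through the parabolic condition, and indeed in small examples (e.g.\ $n=6$, $p=1$, $v_p=2$, $q=3$, $v_q=1$) the contradiction comes out differently: the no-purple condition and maximality of $\bbox{q,v_q}$ pin down $v_{b_{k+1}}$ to a small value, and then $2143$-avoidance (which forces the values above $v_p$ at positions $>q$ to be increasing) produces a direct contradiction with that bound, without ever establishing a parabolic structure. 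You also assert without justification that the only $2143$-extension of the descent $(p,q)$ uses two positions after $q$; the alternative (two positions before $p$) does need to be ruled out, which it can be by the NW-corner property of $(p,v_p)$, but this should be said. To complete step (d) you need a careful argument identifying exactly which position violates either $2143$-avoidance or the maximality of $\bbox{p,v_p}$ or $\bbox{q,v_q}$; this is where the real content of the proposition lies, and the paper's route through Tenner's criterion for fully-commutative permutations is precisely the machinery that handles it.
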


\begin{ex} If $v=3 7 1 4 5 6$, then $\gr{v}{w_0}$ is the same as the upper right anti-diagonal block of \cref{fig:boundingBoxEx}. The bounding boxes of $v$, in the order of \cref{prop:alternatingBoxesAnti}, are $\bbox{1, 3}$, $\bbox{6, 5}$, and $\bbox{3, 1}$. The colors are blue, red, and blue, respectively.
\end{ex}

For the next proposition, we need some additional notation.

Define $\delta_i:[n] \setminus \{i\} \to [n-1]$ as

\[ 
\delta_i(j)= \begin{cases}
j, & j<i;\\
j-1, & j>i.
\end{cases}
\]

\begin{defn}
For $i, k \in [n]$ and $P \subseteq [n]^2$, let $P^k_i\subseteq[n-1]\times [n-1]$ be $P$ with row $i$ and column $k$ deleted. That is, $P^k_i=\{(\delta_i(r), \delta_k(c)): (r, c) \in P\}$.
\end{defn}

\begin{prop} \label{prop:deleteDotDetAnti}

Let $v\in S_n$ be 2143- and 1324-avoiding, and choose $i\in [n]$. Let $x \in S_{n-1}$ be the permutation $x: \delta_i(j) \mapsto \delta_{v_i}(v_j)$ (that is, $x$ is obtained from $v$ by deleting $v_i$ from $v$ in one-line notation and shifting the remaining numbers appropriately). If $(i, v_i)$ is not a spanning corner of $\gr{v}{w_0}$, then $\gr{x}{w_0}=\gr{v}{w_0}_i^{v_i}$. Moreover, for all $i$,
\begin{equation} \label{eqn:deleteDotDetAnti}
\det(\res{M}{\gr{x}{w_0}})=\det(\res{M}{\gr{v}{w_0}_{i}^{v_i}}).\end{equation}
\end{prop}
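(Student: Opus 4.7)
Both sides of the identity are determinants of $(n-1) \times (n-1)$ matrices whose entries are obtained from $M$ by deleting row $i$ and column $v_i$ and then zeroing out positions outside a prescribed support. Each side therefore expands as a signed sum over permutations $w \in S_{n-1}$ whose graph $\gro{w}$ lies in the respective support; since $(-1)^{\ell(w)}$ and the product of matrix entries depend only on $w$, it suffices to prove the set equality
\[
\{w \in S_{n-1} : \gro{w} \subseteq \gr{x}{w_0}\} \;=\; \{w \in S_{n-1} : \gro{w} \subseteq \gr{v}{w_0}_{i}^{v_i}\}.
\]

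The inclusion $\subseteq$ follows once I establish $\gr{x}{w_0} \subseteq \gr{v}{w_0}_{i}^{v_i}$ as subsets of $[n-1]^2$. Every non-inversion $\lrangle{a, b}$ of $x$ lifts under $\delta_i^{-1}$ to a non-inversion $\lrangle{\delta_i^{-1}(a), \delta_i^{-1}(b)}$ of $v$ not involving $i$, and this lift respects sandwiched points; combined with the bijection between $\gro{x}$ and $\gro{v} \setminus \{(i, v_i)\}$ provided by the reindexing, \cref{lem:graphCharacterization} gives the containment.

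For the reverse inclusion, I would pass back to $S_n$. Given $w \in S_{n-1}$ with $\gro{w} \subseteq \gr{v}{w_0}_{i}^{v_i}$, define $w' \in S_n$ by setting $w'(i) = v_i$ and $w'(\delta_i^{-1}(j)) = \delta_{v_i}^{-1}(w(j))$ for $j \in [n-1]$; by construction $\gro{w'} \subseteq \gr{v}{w_0}$. Because $v$ avoids $1324$ and $2143$, and because each of the patterns $24153$, $31524$, $421653$ contains $2143$ as a subpattern, $v$ avoids all four patterns listed in \cref{prop:fullIntervals}, which therefore yields $w' \in [v, w_0]$. I would then invoke the general Bruhat-order sublemma that, whenever $v \leq w'$ in $S_n$ with $v(i) = w'(i)$, the permutations $x, w \in S_{n-1}$ obtained by deleting position $i$ satisfy $x \leq w$. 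This sublemma follows from the rank-condition characterization $u \leq u' \iff |\{k \leq p : u(k) \leq q\}| \geq |\{k \leq p : u'(k) \leq q\}|$ for all $p,q$: after the $\delta$-shifts, the term $[\,i \leq p \text{ and } v(i) \leq q\,]$ subtracted from the rank count is identical for $v$ and $w'$ (since $v(i) = w'(i)$), so it cancels and the inequality descends. Hence $w \in [x, w_0]$, giving $\gro{w} \subseteq \gr{x}{w_0}$, as required.

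The main technical obstacle is the Bruhat-order sublemma: though it is essentially standard, proving it requires a careful case split in the rank condition based on whether $p$ crosses $i$ and whether $q$ crosses $v_i$ under the shifting maps $\delta_i$ and $\delta_{v_i}$. The pattern-avoidance hypothesis on $v$ is used only to apply \cref{prop:fullIntervals}.
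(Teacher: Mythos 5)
Your proof is correct, and it takes a genuinely different route from the paper's. You prove the stronger statement that the sets $\{w \in S_{n-1} : \gro{w} \subseteq \gr{x}{w_0}\}$ and $\{w \in S_{n-1} : \gro{w} \subseteq \gr{v}{w_0}_i^{v_i}\}$ coincide: the easy containment $\gr{x}{w_0} \subseteq \gr{v}{w_0}_i^{v_i}$ handles one direction, and for the other you lift $w$ to $w'\in S_n$ with $w'(i)=v_i$, apply Sj\"ostrand's characterization (via \cref{prop:fullIntervals}, using that $24153, 31524, 421653$ each contain $2143$) to get $w' \geq v$, and then descend via the Bruhat-deletion lemma to get $w \geq x$. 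This deletion lemma is standard and your sketch of it via the rank criterion is sound: writing $\tilde p = p+[p\geq i]$, $\tilde q = q+[q\geq v_i]$, one finds $r_x(p,q) = r_v(\tilde p, \tilde q) - [p\geq i][q\geq v_i]$ and likewise for $w,w'$, so the subtracted indicator cancels because $v(i)=w'(i)$. The paper instead translates to lower intervals $[e,w]$ via column reversal and proves the analogue (\cref{prop:deleteDotDet}) by a case analysis on whether $(i,w_i)$ is an internal dot, a non-spanning corner, or a spanning corner of $\gr{e}{w}$; in the spanning-corner case the two supports genuinely differ, and the paper shows that the extra entries never contribute because the restricted matrix is block-upper-triangular with those entries off the diagonal blocks, relying on the technical \cref{lem:boxIntersectionsMoreGraphs,lem:invNotInBoxOK}. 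Your argument sidesteps nearly all of that machinery (and the entire bounding-box analysis of Section~\ref{sec:proof2}); what it buys is conceptual clarity and brevity, at the cost of reusing the Sj\"ostrand-based \cref{prop:fullIntervals} as a black box. Note the two proofs implicitly reconcile: your reverse-inclusion step shows that no fitting permutation passes through the positions in $\gr{v}{w_0}_i^{v_i}\setminus\gr{x}{w_0}$, which is exactly what the paper's block-triangularity argument establishes by hand.
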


\begin{figure}
    \centering
    \includegraphics[height=0.4\textheight]{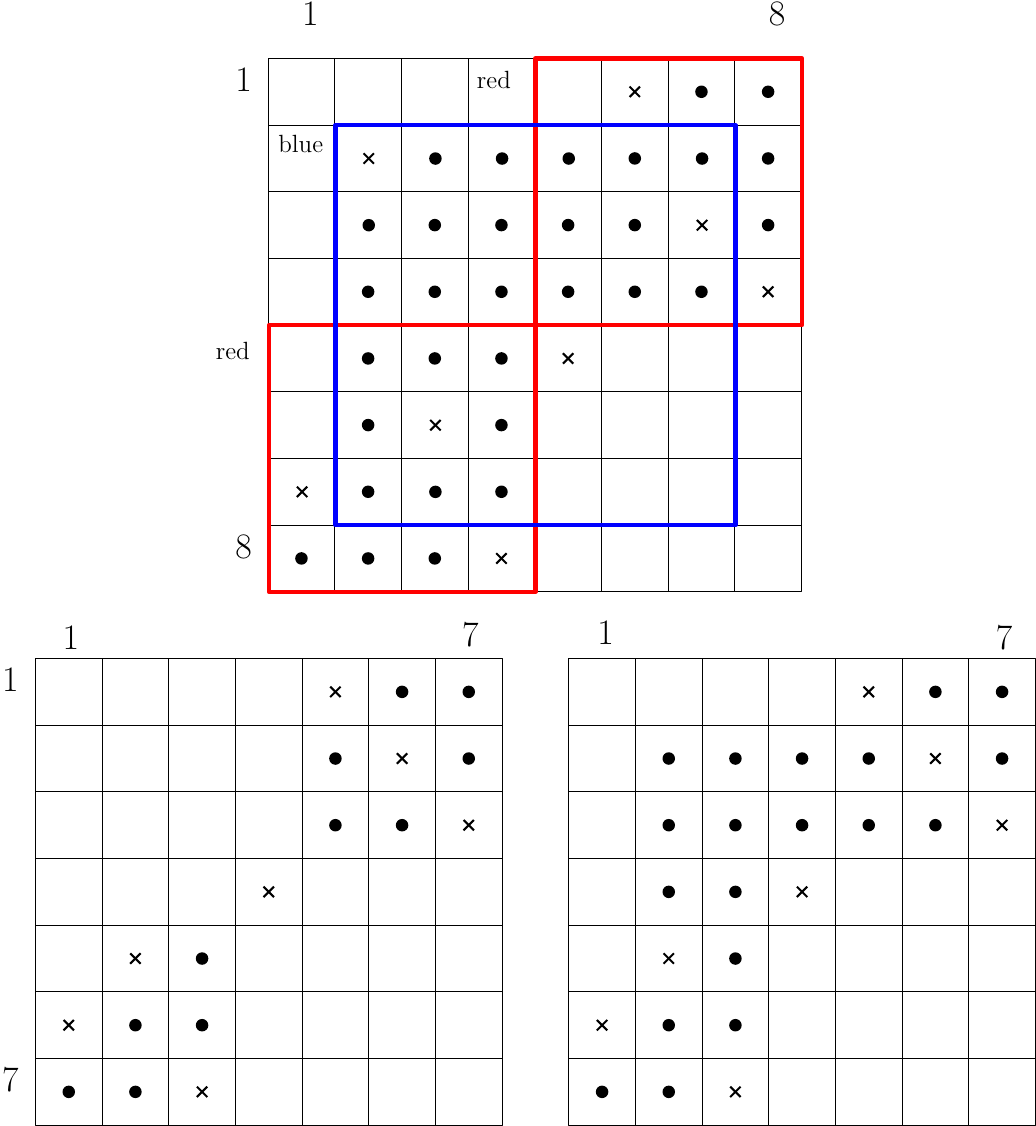}
    \caption{At the top, $\gr{v}{w_0}$ for $v=62785314$. On the bottom left, the region $\gr{x}{w_0}$, where $x=5674213$ is the permutation obtained by deleting 2 from the one-line notation of $v$ and shifting remaining numbers appropriately. On the bottom right, the region $\gr{v}{w_0}^2_3$. As you can see, the determinant of $\res{M}{\gr{x}{w_0}}$ is the same as the determinant of $\res{M}{\gr{v}{w_0}^2_3}$ for all $7 \times 7$ matrices $M$, illustrating \cref{prop:deleteDotDetAnti}. }
    \label{fig:deleteDotDetEx}
\end{figure}

The proofs of these propositions are quite technical and appear below \cref{sec:proof1,sec:proof2} respectively.

\begin{figure}
    \centering
    \includegraphics[height=0.45\textheight]{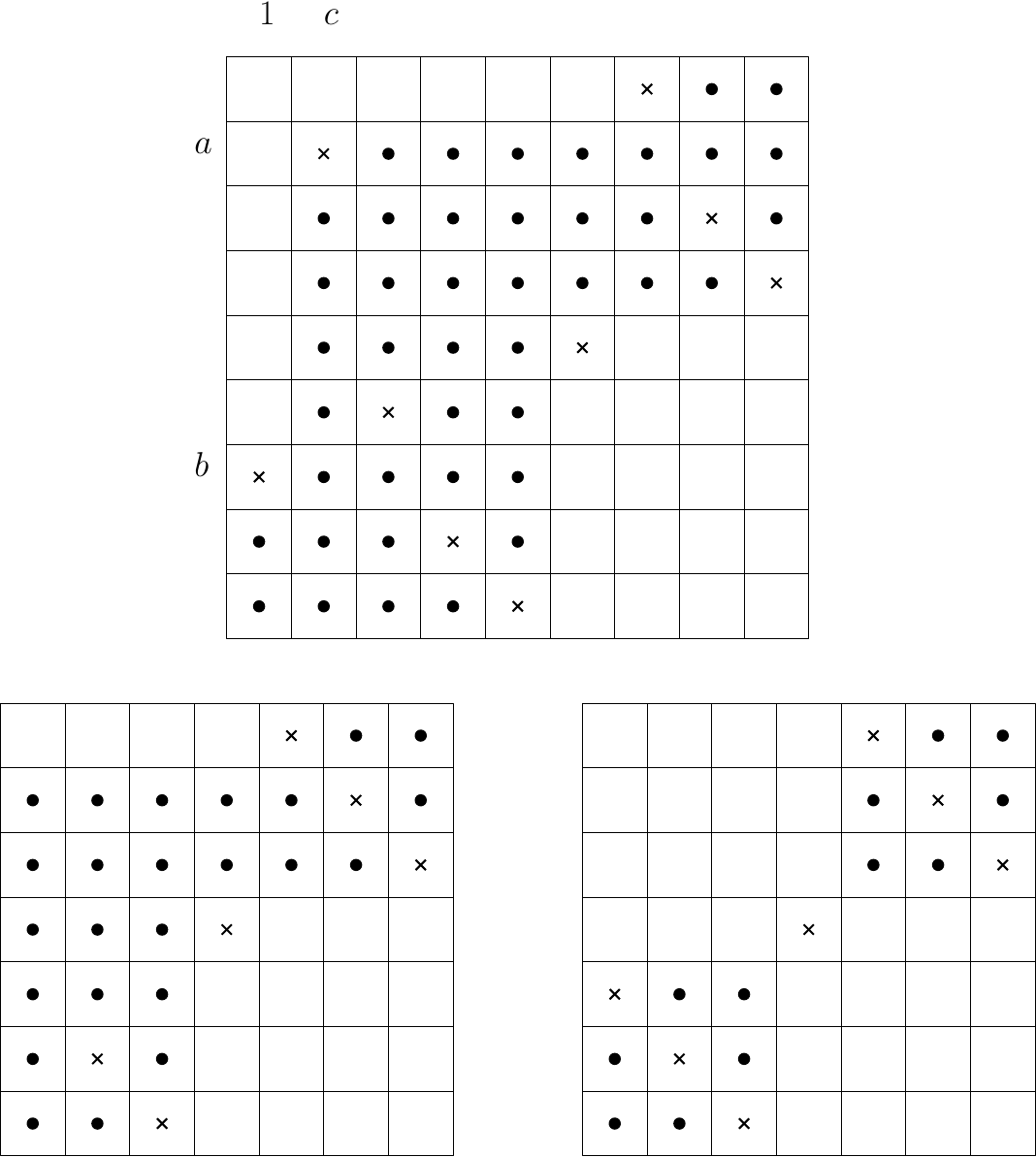}
    \caption{An illustration of (2) in the proof of \cref{thm:detSign}. Let $v=62785314$; $\gr{v}{w_0}$ is shown with bounding boxes at the top of \cref{fig:deleteDotDetEx}. In this case, $a=c=2$, $b=7$, $w=728963145$ and $y=5674123$. On the top is $\gr{w}{w_0}$. On the bottom left is $\gr{v}{w_0}^1_a$, which is equal to $\gr{w}{w_0}^{1, c}_{a, b}$. On the bottom right is $\gr{y}{w_0}$.}
    \label{fig:weirdTermDodgson}
\end{figure}

\begin{thm} \label{thm:detSign}
Let $v \in S_n$ avoid 1324 and 2143 and let $k$ be the size of the largest square in $\gr{v}{w_0}$. For $M$ $k$-positive, $(-1)^{\ell(v)} \det \res{M}{\gr{v}{w_0}}>0$.
\end{thm}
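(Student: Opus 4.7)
The plan is to induct on $n$. For the base cases, I invoke \cref{cor:partitionimm} and \cref{cor:partitioncompimm}, which dispose of the situations where $\gr{v}{w_0}$ itself is a Young diagram or the complement of one (and in particular handle small $n$). For the inductive step with $n \geq 3$, my first move is to reduce to the case where $w_0 v$ is not contained in any maximal parabolic $S_j \times S_{n-j} \subseteq S_n$: if it were, \cref{lem:multipleblock} would factor $(-1)^{\ell(v)}\det\res{M}{\gr{v}{w_0}}$ as the product of two analogous signed determinants for smaller, still $1324$- and $2143$-avoiding permutations $v_1 \in S_j$ and $v_2 \in S_{n-j}$ on the corresponding $k$-positive submatrices $M_1$ and $M_2$, and each factor would be positive by induction.

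Assuming now that $w_0v$ is not in any maximal parabolic, \cref{prop:alternatingBoxesAnti} tells us the bounding boxes of $\gr{v}{w_0}$ alternate strictly between red and blue with no purple box. My plan is to select two spanning corners $(a, b)$ and $(a', b')$ from two adjacent bounding boxes of opposite color and apply Lewis Carroll's identity (\cref{prop:lc-id}) to the matrix $A := \res{M}{\gr{v}{w_0}}$:
\[
\det(A)\det(A_{a,a'}^{b,b'}) \;=\; \det(A_a^b)\det(A_{a'}^{b'}) \;-\; \det(A_a^{b'})\det(A_{a'}^b).
\]
The three ``diagonal'' minors $\det(A_a^b)$, $\det(A_{a'}^{b'})$, and $\det(A_{a,a'}^{b,b'})$ each equal, via \cref{prop:deleteDotDetAnti}, a determinant of the form $\det\res{M'}{\gr{u}{w_0}}$ for a smaller permutation $u$ obtained from $v$ by deleting the dot(s) $v_a$ and/or $v_{a'}$. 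Such $u$ still avoid $1324$ and $2143$, and the largest square in $\gr{u}{w_0}$ is still of size at most $k$, so induction pins down the signs of these three terms, and a short computation comparing $\ell(u)$ to $\ell(v)$ shows they combine as expected.

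The main obstacle, and crux of the argument, is controlling the two ``off-diagonal'' terms $\det(A_a^{b'})$ and $\det(A_{a'}^b)$: since $(a, b')$ and $(a', b)$ are in general not dots of $v$, \cref{prop:deleteDotDetAnti} does not apply directly. Following the construction illustrated in \cref{fig:weirdTermDodgson}, my plan is to embed $v$ into a carefully chosen permutation $w \in S_{n+1}$ by inserting one auxiliary dot so that the restricted region $\gr{v}{w_0}^{b'}_{a}$ coincides with $\gr{w}{w_0}^{b',b''}_{a,a''}$ for a suitable auxiliary row/column pair $a'', b''$; then applying \cref{prop:deleteDotDetAnti} to $w$ (deleting two of its dots) converts the off-diagonal determinant into $\det\res{M''}{\gr{y}{w_0}}$ for some $1324$- and $2143$-avoiding $y \in S_{n-1}$ whose square bound is still $k$, at which point induction supplies the sign. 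An analogous manipulation handles $\det(A_{a'}^b)$. Combining the six sign determinations on both sides of the Lewis Carroll identity and solving for $\sgn(\det A)$ yields $(-1)^{\ell(v)}$, proving the theorem. The delicate part of the proof will be the combinatorial bookkeeping: exhibiting the enlarging permutation $w$ (using the alternating red/blue structure of the bounding boxes), verifying that the resulting $y$ lies in the correct pattern-avoidance class with the right Durfee-type bound, and checking that the parities of $\ell(v)$, $\ell(u)$, and $\ell(y)$ cooperate with the signs produced by Lewis Carroll so that all six contributions assemble into $(-1)^{\ell(v)}\det(A) > 0$.
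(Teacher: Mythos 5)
Your overall strategy matches the paper's: induct on $n$, dispose of the parabolic (block-antidiagonal) case via \cref{lem:multipleblock}, dispose of the single-bounding-box case via \cref{cor:partitionimm} and \cref{cor:partitioncompimm}, and in the general case apply Lewis Carroll's identity, resolving the "off-pattern" minor(s) by embedding $v$ into a larger permutation. However, your choice of which rows and columns to delete for the Lewis Carroll step is not the one the paper makes, and the difference is load-bearing; as written, your plan has a genuine gap.

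You propose to delete rows and columns indexed by two \emph{spanning corners} $(a,v_a)$ and $(a',v_{a'})$ of adjacent bounding boxes. The paper instead deletes rows $a, b := v^{-1}(1)$ and columns $1, c := v_a$: one spanning corner $(a, v_a)$, and the leftmost dot $(b,1)$, which (as the paper notes) is a corner of $\gr{v}{w_0}$ but \emph{not} a spanning corner. This distinction matters in two places. First, for the double minor $\det(A_{a,b}^{1,c})$: the proof of \cref{prop:deleteDotDet} shows that deleting a non-spanning corner gives \emph{exact} region equality $\gr{v}{w_0}^1_b = \gr{u}{w_0}$, after which a second application of \cref{prop:deleteDotDetAnti} to the genuine $\gr{u}{w_0}$ is legitimate. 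If both deleted dots were spanning corners, the first deletion in general gives only equality of determinants and not of regions (the leftover set $\{(r,c) : (r,c) \text{ sandwiched only by inversions involving } i\}$ can be nonempty), so you cannot simply apply \cref{prop:deleteDotDetAnti} a second time to the resulting region; your claim that all three "diagonal" minors reduce via \cref{prop:deleteDotDetAnti} to $\det\res{M'}{\gr{u}{w_0}}$ is therefore unjustified. Second, the two minors you call "off-diagonal" do not both need the embedding trick in the paper's setup: one of them ($A_b^c$) is handled by the clean equality $\gr{v}{w_0}^c_b = \gr{v}{w_0}^{v_n}_n$, which reduces it to a single dot deletion, and the embedding into $S_{n+1}$ is used only for $A_a^1$. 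Both reductions are powered by combinatorial facts (no $(i,v_i)$ with $n>i>b$ and $v_i>v_n$, and no $(i,v_i)$ with $i<a$ and $v_i<v_n$) that are proved using $2143$-avoidance together with the specific choice $b=v^{-1}(1)$, $c=v_a$ — e.g., the argument that otherwise "$v_a\, 1\, v_i\, v_n$ would form a $2143$ pattern" depends on the value $1$ appearing in the pattern. With two spanning corners in generic position those facts and the corresponding embedding construction do not obviously transfer, so you would owe a new argument (and likely two separate embeddings). To repair the proof, replace one of your spanning corners by the leftmost dot $(b,1)$ and follow the paper's pairing of rows $\{a,b\}$ with columns $\{1,c\}$.
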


\begin{proof}

We proceed by induction on $n$; the base case $n=2$ is clear. 

Now, let $n>2$. If $\gr{v}{w_0}$ is a partition or a complement of a partition (that is, it has a single bounding box), we are done by \cref{cor:partitionimm} or \cref{cor:partitioncompimm}. If it is block-antidiagonal (that is, $w_0v$ is contained in some parabolic subgroup), then we are done by \cref{lem:ImmMultipleBlock}. So we may assume that $v$ has at least 2 bounding boxes and that adjacent bounding boxes have nonempty intersection (where bounding boxes are ordered as usual by the row of their northeast corner). Indeed, if adjacent bounding boxes have empty intersection, then the fact that $\gr{v}{w_0}$ is contained in the union of bounding boxes (\cref{lem:boundingboxes}) implies that $\gr{v}{w_0}$ is block-antidiagonal.

Because $v$ avoids 1324 and 2143, the final two bounding boxes of $\gr{v}{w_0}$ are of opposite color by \cref{prop:alternatingBoxesAnti}. Without loss of generality, we assume the final box is red and the second to last box is blue. (Otherwise, we can consider the antidiagonal transpose of $M$ restricted to $\gr{w_0 v^{-1} w_0}{w_0}$, which has the same determinant.) 

This means the final box is $\bbox{n, v_n}$, and the second to last box is $\bbox{a, v_a}$ for some $a<n$ with $1<v_a<v_n$. We analyze the sign of $\det \res{M}{\gr{v}{w_0}}$ using Lewis Carroll's identity on rows $a, b:=v^{-1}(1)$ and columns $1, c:=v_a$. Note that $a<b$ and $1<c$.

We will consider the determinants appearing in Lewis Carroll's identity one by one, and show that they are of the form $\det(\res{N}{\gr{u}{w_0}})$ for some permutation $u$. Note that $(\res{M}{\gr{v}{w_0}})^i_j=\res{M^i_j}{\gr{v}{w_0}^i_j}$.

\begin{enumerate}
    \item Consider $(\res{M}{\gr{v}{w_0}})^{1, c}_{a, b}$. Next we show that the determinant of this matrix is equal to $\det(\res{M^{1, c}_{a, b}}{\gr{x}{w_0}})$ where $x$ is the permutation obtained from $v$ by deleting 1 and $v_a$ from $v$ in one-line notation and shifting remaining values to obtain a permutation of $[n-2]$. Indeed, $(b, 1)$ is a corner of $\gr{v}{w_0}$ but not a spanning corner. By \cref{prop:deleteDotDetAnti}, we have that $\gr{v}{w_0}^1_b=\gr{u}{w_0}$, where $u$ is obtained from $v$ by deleting 1 from $v$ in one-line notation and shifting appropriately. So we have 
    
    \begin{align*}(\res{M}{\gr{v}{w_0}})^{1, c}_{a, b}&= ((\res{M}{\gr{v}{w_0}})^1_ b)^{c-1}_{a}\\
    &=(\res{M^1_b}{\gr{u}{w_0}})^{c-1}_{a}\\
    &= \res{M^{1, c}_{a, b}}{\gr{u}{w_0}^{c-1}_a}.
    \end{align*}
    
    Note that $u_a=c-1$ and deleting $c-1$ from the one-line notation of $u$ gives $x$. So taking determinants and applying \cref{prop:deleteDotDetAnti} gives the desired equality.
    
    Note that $\ell(x)=\ell(v)-a-b-c+4$. Indeed, 1 is involved in exactly $b-1$ inversions of $v$. Because there are no $(j, v_j)$ northwest of $(a, c)$, $a$ is involved in exactly $a+c-2$ inversions of $v$: each of the $c-1$ columns to the left of column $c$ contains a dot $(j, v_j)$ southwest of $(a, c)$, and each of the $a-1$ rows above $(a, c)$ contains a dot $(j, v_j)$ northeast of $(a, c)$. We have counted the inversion $\lrangle{1, a}$ twice, so deleting $1$ and $c$ from $v$ deletes $(b-1)+(a+c-2)-1$ inversions.
    
    \item  Consider $(\res{M}{\gr{v}{w_0}})^1_a$. The determinant of this matrix is equal to the determinant of $\res{M^1_a}{\gr{y}{w_0}}$, where $y$ is obtained from $v$ by adding $v_n+1$ to the end of $v$ in one-line notation and shifting appropriately to get $w$, then deleting 1 and $v_a$ from $w$ and shifting values to obtain a permutation of $[n-1]$. 
    
    To see this, first note that there are no pairs $(i, v_i)$ with $n>i>b$ and $v_i>v_n$; such a pair would mean that $v_a 1 v_i v_n$ form a 2143 pattern, which is impossible. There are also no pairs $(i, v_i)$ with $i<a$ and $v_i<v_n$. Indeed, because $(a, v_a)$ is a corner, we would have to have $v_a<v_i<v_n$. This means we would have a red bounding box $\bbox{j, v_j}$ following $\bbox{a, v_a}$ with $a<j$ and $v_n<v_j$. Then $v_i v_a v_b v_n$ would form a 2143 pattern. 
    This implies that the number of elements of $\gr{v}{w_0}$ in row $b$ and in row $n$ are the same. Similarly, the number of elements of $\gr{v}{w_0}$ in column $c$ and column $v_n$ are the same. So to obtain $\gr{w}{w_0}$ from $\gr{v}{w_0}$, add a copy of row $b$ of $\gr{v}{w_0}$ below the last row, a copy of column $c$ to the right of the $v_n$th row, and a dot in position $(n+1, v_{n+1})$. It follows that $\gr{w}{w_0}^{1, c}_{a, b}=\gr{v}{w_0}^1_a$.
    
    One can check that $w$ (and thus $y$) avoid 2143 and 1324. A similar argument to \emph{(1)} shows that the determinant of $\res{M}{\gr{w}{w_0}^{1, c}_{a, b}}$ is equal to the determinant of $\res{M^{1}_{a}}{\gr{y}{w_0}}$.
    
    Note that $\ell(y)=\ell(v)-a-b-c+4 + n -v_n$.
    
    \item Consider $(\res{M}{\gr{v}{w_0}})^c_b$. The determinant of this matrix is equal to the determinant of $\res{M^c_b}{\gr{z}{w_0}}$, where $z$ is obtained from $v$ by deleting $v_n$ from $v$ in one-line notation and shifting as necessary. This follows from the fact that $\gr{v}{w_0}$ has the same number of dots in rows $b$ and $n$, and in columns $c$ and $v_n$ (proved in \emph{(2)}). So $\gr{v}{w_0}^c_b=\gr{v}{w_0}^{v_n}_n$. The claim follows from \cref{prop:deleteDotDetAnti} applied to $\res{M}{\gr{v}{w_0}^{v_n}_n}$.
    
    Note that $\ell(z)=\ell(v)-(n-v_n)$.

    \item Consider $(\res{M}{\gr{v}{w_0}})^1_b$. By \cref{prop:deleteDotDetAnti}, the determinant of this matrix is equal to the determinant of $\res{M^1_b}{\gr{p}{w_0}}$, where $p$ is obtained from $v$ by deleting 1 from $v$ in one-line notation and shifting appropriately.
    
    Note that $\ell(p)=\ell(v)-b+1$.
    
    \item Consider $(\res{M}{\gr{v}{w_0}})^c_a$. By \cref{prop:deleteDotDetAnti}, the determinant of this matrix is equal to the determinant of $\res{M^c_a}{\gr{q}{w_0}}$, where $q$ is obtained from $v$ by deleting $v_a$ from $v$ in one-line notation and shifting appropriately.
    
    Note that $\ell(q)=\ell(v)-c-a+2$.
\end{enumerate}

Notice that the permutations $x, y, z, p, q$ listed above avoid 2143 and 1324. By induction, the determinant of $\res{M_I^J}{\gr{u}{w_0}}$ has sign $\ell(u)$, and, in particular, is nonzero, so we know the signs of each determinant involved in Lewis Carroll's identity besides $\det M$. Dividing both sides of Lewis Carroll's identity by $\det (\res{M}{\gr{v}{w_0}})^{1, c}_{a, b}$, we see that both terms on the right-hand side have sign $\ell(v)$. Thus, the right-hand side is nonzero and has sign $\ell(v)$, which completes the proof.

\end{proof}

Taking this theorem with \cref{cor:immDet}, we can now prove \cref{thm:old}, and thus \cref{thm:123avoiding}.

\begin{proof}[Proof of \cref{thm:old}]
By \cref{cor:immDet}, $$\Imm_v(M)= (-1){\ell(v)}\det \res{M}{\gr{v}{w_0}}.$$

Let $k'\leq k$ be the size of the largest square in $\gr{v}{w_0}$. By \cref{thm:detSign}, for $M$ $k'$-positive, the right hand side of this expression is positive. Any $k$-positive matrix is also $k'$-positive, so we are done. 
\end{proof}

\begin{proof}[Proof of Theorem~\ref{thm:123avoiding}]
Let $v \in S_n$ is 1324-, 2143-avoiding.  If for all $i<j$ with $v_i<v_j$, we have $j-i \leq k$ or $v_j-v_i \leq k$, then by Lemma~\ref{lem:sq-inversions}, $\gr{v}{w_0}$ does not contain a square of size $k+1$. Thus, by Theorem~\ref{thm:old}, $\Imm_v(M)$ is $k$-positive.
\end{proof}

The methods above give an elementary proof of the following, which is a special case of the results of Rhoades--Skandera on Kazhdan-Lusztig immanants evaluated on totally nonnegative matrices. 

\begin{cor}
    Let $v \in S_n$ avoid 1324 and 2143. Then $\Imm_v(M) >0$ for all $n$-positive (that is, ``totally positive") $n \times n$ matrices $M$.
\end{cor}
\begin{proof}
If $v\in S_n$ then $\gr{v}{w_0}$ has no square of size $n+1$.  So, by Theorem~\ref{thm:123avoiding}, if $v$ avoids 1324 and 2143 and $M$ is a totally positive $n\times n$ matrix, then $\Imm_v(M) >0$.
\end{proof}

\section{Pattern Avoidance Conditions}\label{sec:pattern-avoidance}

In this section, we investigate the relationship between the assumptions of Theorem~\ref{thm:123avoiding} and pattern avoidance. First, we show that \cref{thm:123avoiding} supports \cref{conj:pasha}; that is, all permutations satisfying the assumptions of \cref{thm:123avoiding} also satisfy the assumptions of \cref{conj:pasha}.

\begin{prop}\label{prop:inc-pattern-avoiding}
If $v\in S_n$ and for all $i<j$ with $v_i<v_j$ we have $j-i \leq k$ or $v_j-v_i \leq k$, then $v$ avoids $12...(k+1)$.
\end{prop}

\begin{proof}
Suppose $i_1<i_2<...<i_{k+1}$ and $v_{i_1}<v_{i_2}<...<v_{i_{k+1}}$.  In other words, $v_{i_1},v_{i_2},...,v_{i_{k+1}}$ is an occurrence of the pattern $12...(k+1)$.  Let $R$ be the rectangle with corners at $(i_1,v_{i_1})$, $(i_1, v_{i_{k+1}})$, $(i_{k+1}, v_{i_{k+1}})$, and $(i_{k+1}, v_{i_1})$.  Notice that $R$ is at least of size $(k+1)\times(k+1)$. For all $(r, c) \in R$, $(r, c)$ is sandwiched by $\lrangle{i_1, i_{k+1}}$, a non-inversion.  By Lemma~\ref{lem:graphCharacterization}, this means $(r, c)$ is in $\gr{v}{w_0}$.  So, all of $R$ is is in $\gr{v}{w_0}$ and there is a square of size $k+1$ in $\gr{v}{w_0}$.  By \cref{lem:sq-inversions}, this means there is some non-inversion $\langle i,j\rangle$ where $j-i>k$ or $v_j-v_i>k$.
\end{proof}


Next, we consider pattern avoidance conditions for a permutation $v$ that are equivalent to the condition that $\gr{v}{w_0}$ has no square of size $k+1$, and thus equivalent to the contition that for all non-inversions $\langle i,j\rangle$ we have $j-i \leq k$ or $v_j-v_i \leq k$. That is, we give a way to phrase the assumptions of \cref{thm:123avoiding} entirely in terms of pattern avoidance.

\begin{prop}\label{prop:pattern-difference}
Let $v \in S_n$.  Then $\gr{v}{w_0}$ contains a square of size $k+1$ if and only if for all $k+1 \leq m \leq 2k$, all patterns $u_1 \dots u_m$ occurring in $v$ satisfy $u_{i+k}-u_i \neq k$ for all $i$.
\end{prop}

\begin{proof}
By Lemma~\ref{lem:sq-inversions}, $\gr{v}{w_0}$ contains a square of size $k+1$ if and only if, for some non-inversion $\lrangle{i, j}$, $j-i \geq k$ and $v_j-v_i \geq k$. We will show that the existence of such an inversion is equivalent to the statement about patterns in the proposition. 

Suppose for some non-inversion $\lrangle{i, j}$ of $v$, $j-i \geq k$ and $v_j-v_i \geq k$. Now, take 
\[L:= \{v_i +1, \dots, v_j-1\} \cap \{v_{i+1}, \dots,v_{j-1}\}.
\]
If $L$ has at least $k-1$ elements, let $S=T$ be any $(k-1)$-element subset of $L$. Otherwise, let $S$ be any $(k-1)$-element subset of $\{v_i +1, \dots, v_j-1\}$ containing $L$ and let $T$ be any $(k-1$-element subset of $\{v_{i+1}, \dots,v_{j-1}\}$ containing $L$. Notice that $S \cup T$ contains exactly $k-1$ numbers between $v_i$ and $v_j$ and exactly $k-1$ numbers that lie to the right of $v_i$ and to the left of $v_j$ in $v$. Now, we consider the pattern formed by $ S \cup T \cup \{v_i, v_j\}$ in $v$; say this pattern is $u_1 \dots u_m$. By construction $m$ is between $k+1$ and $2k$. Say $u_r$ corresponds to $v_i$. By construction, $u_{r+k}$ corresponds to $v_j$ and $u_{r+k}-u_r=k$.

Now, let $u_1 \dots u_m$ with $k+1 \leq m \leq 2m$ be a pattern such that $u_{r+k}-u_r=k$ for some $r$. If this pattern appears in $v$, let $v_i$ correspond to $u_r$ and $v_j$ correspond to $u_{r+k}$. Note that $\lrangle{i, j}$ is a non-inversion of $v$. To pass from $u$ to $v$, we insert additional numbers and increase existing ones, so $j-i \geq k$ and $v_j - v_i \geq k$.
\end{proof}

\begin{prop}\label{prop:sq-box}
If $v$ is 1324-, 2143-avoiding and there is a square of size $r$ in $\gr{v}{w_0}$, then there is a square of size $r$ that has a spanning corner of $\gr{v}{w_0}$ as either its most northwestern or most southeastern box.
\end{prop}

\begin{proof}
Let $Q$ be a square of size $r$ in $\gr{v}{w_0}$.  Without loss of generality, $Q$ lies on or above the anti-diagonal (the argument is analogous if the northeast corner of $Q$ lies on or below the anti-diagonal).  Then by Lemma~\ref{lem:graphCharacterization} the northwest corner of $Q$ is either $(i,v_i)$ or it is sandwiched by some non-inversion $\langle i,j\rangle$.  In either case, $\bbox{i,v_i}$ contains all of $Q$.  Since $\bbox{i,v_i}$ is either a bounding box or contained in a bounding box, $Q$ must be contained in a single bounding box.

Now we can assume $Q$ is contained within the bounding box $\bbox{i,v_i}$.  If $\bbox{i,v_i}$ is blue then there is some $(j,v_j)\in\gro{v}$ such that either $(j,v_j)$ is the southeast corner $Q$ or $\langle i,j\rangle$ is a non-inversion that sandwiches all of $Q$.  Consider the square of size $r$ with northwest corner $(i,v_i)$. Everything in this square is in $\gro{v}$ or is sandwiched by $\langle i,j\rangle$. Thus, by Lemma~\ref{lem:graphCharacterization}, this square is in $\gr{v}{w_0}$.  Similarly, if the bounding box is red then the square of size $r$ with southeast corner $(i,v_i)$ is in $\gr{v}{w_0}$.
\end{proof}

Taking these two propositions together, we can rewrite Theorem~\ref{thm:123avoiding} fully in terms of pattern avoidance conditions.

\begin{thm}\label{thm:patterns}
Let $v\in S_n$ avoid 1324, 2143, and all patterns $u$ of length $k+1\leq m\leq 2k$ where $u_1=1$ and $u_{k+1}=k+1$ or where $u_m=m$ and $u_{m-k}=m-k$. Then $\Imm_v$ is $k$-positive.
\end{thm}

\begin{proof}
Suppose $v\in S_n$ avoids 1324 and 2143 and $\gr{v}{w_0}$ contains a square of size $k+1$.  Then by Proposition~\ref{prop:sq-box}, $\gr{v}{w_0}$ contains a square of size $k+1$ that has a spanning corner as either its most northwestern or most southeastern box.  From the proof of Proposition~\ref{lem:sq-inversions}, this means there is a square of size $k+1$ in $\gr{v}{w_0}$ sandwiched by the non-inversion $\langle i,j\rangle$ with either $i$ or $j$ a spanning corner.
Let $u=u_1\dots u_m$ be the pattern constructed in Proposition~\ref{prop:pattern-difference} with $u_{\ell+k}-u_\ell=k$.  If $i$ is a spanning corner then $\ell=1$ and $u_\ell=1$ and if $j$ is a spanning corner then $\ell+k=m$ and $u_{\ell+k}=m$.

Thus, if $v$ avoids 1324 and 2143 as well as all patterns $u$ of length $k+1\leq m\leq 2k$ where $u_1=1$ and $u_{k+1}=k+1$ or where $u_m=m$ and $u_{m-k}=m-k$, then $\gr{v}{w_0}$ contains no square of size $k+1$.  By Theorem~\ref{thm:old}, $\Imm_v$ is $k$-positive.
\end{proof}

\begin{rmk} We note that some of the pattern avoidance conditions in \cref{thm:patterns} are repetitive, as some patterns listed contain others. For example, if $k=2$ the patterns listed include both 123 and 1324. So one can check that $v$ satisfies the hypotheses of \cref{thm:patterns} by checking it avoids a somewhat smaller list of patterns.
\end{rmk}

We get the following immediate corollary from Theorem~\ref{thm:patterns}:

\begin{cor}\label{cor:2-pos}
	Let $v \in S_n$ avoid 123, 2143, 1432, and 3214. Then $\Imm_v(M)$ is 2-positive.
\end{cor}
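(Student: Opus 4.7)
The plan is to derive this corollary directly by combining \cref{prop:pattern-avoiding-2} with the main theorem \cref{thm:123avoiding}; the statement is flagged as ``immediate,'' so there should be no new ideas required. Concretely, I would argue as follows.

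First, observe that the hypotheses of \cref{thm:123avoiding} are satisfied: by assumption $v$ avoids $1324$ and $2143$. To apply the theorem I still need to control the size of the largest square in $\gr{v}{w_0}$. Since $v$ also avoids $2143$ along with the three patterns $123$, $1432$, and $3214$, \cref{prop:pattern-avoiding-2} (whose ``only if'' direction applies here) tells us that $\gr{v}{w_0}$ contains no square region of size $3\times 3$. Because $\gro{v} \subseteq \gr{v}{w_0}$ always contains at least one point $(i,v_i)$, the largest square in $\gr{v}{w_0}$ has size exactly $k\times k$ for some $k\in\{1,2\}$.

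Now \cref{thm:123avoiding} applies with this value of $k$, yielding that $\Imm_v(M)$ is $k$-positive. The final, routine step is to note that $k$-positivity for $k\le 2$ implies $2$-positivity: indeed, every $2$-positive matrix is in particular $k$-positive for every $k\le 2$ (positivity of all minors of size $\le 2$ forces positivity of all minors of size $\le k$), so any function that is positive on $k$-positive matrices is, a fortiori, positive on $2$-positive matrices. Combining these observations gives the corollary.

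There is no real obstacle in this proof; the whole content has been loaded into the two previously established results. The only tiny subtlety worth writing out is the implication ``$k$-positive with $k\le 2$ $\Rightarrow$ $2$-positive,'' which is a one-line containment of the relevant classes of matrices.
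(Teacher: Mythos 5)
Your proof is correct and is exactly the argument the paper has in mind (the paper labels this an immediate corollary of \cref{prop:pattern-avoiding-2} and \cref{thm:123avoiding} and does not spell it out). Your extra remark that $k\le 2$ suffices because every $2$-positive matrix is $k$-positive for $k\le 2$ is the right way to handle the degenerate case $k=1$ (which occurs only for $v=w_0$).
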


However, analogous statements for $k>2$ are difficult to state.  The larger $k$ is, the more patterns need to be avoided in order to mandate that $\gr{v}{w_0}$ has no square of size $k+1$.  We illustrate with the following example.

\begin{ex}
Let $v$ be 1324-, 2143-avoiding.  Then $\gr{v}{w_0}$ does not have a square of size 4, if and only if $v$ also avoids the following patterns: 1234, 15243, 15342, 12543, 13542, 32415, 42315, 32145, 42135, 165432, 156432, 165423, 156423, 543216, 543216, 453216, 543126, 453126.
\end{ex}

Due the number of patterns to be avoided, statements analogous to Corollary~\ref{cor:2-pos} for larger $k$ seem unlikely to be useful.

\section{Final remarks}\label{sec:connectionsToCluster}

In this section, we give some additional motivation and context for \cref{quest:KLImmkPos}. For an arbitrary reductive group $G$, Lusztig~\cite{LusTPGr} defined the totally positive part $G_{>0}$ and showed that elements of the dual canonical basis of $\mc{O}(G)$ are positive on $G_{>0}$. Fomin and Zelevinsky~\cite{FZTNNSemisimple} later showed that for semisimple groups, $G_{>0}$ is characterized by the positivity of generalized minors, which are dual canonical basis elements corresponding to the fundamental weights of $G$ and their images under Weyl group action. Note that the generalized minors are a finite subset of the (infinite) dual canonical basis, but their positivity guarantees the positivity of all other elements of the basis. 

In the case we are considering, $G=GL_n(\CC)$, $G_{>0}$ consists of the totally positive matrices and generalized minors are just ordinary minors. Skandera~\cite{Skan} showed that Kazhdan-Lusztig immanants are part of the dual canonical basis of $\mc{O}(GL_n(\CC))$, which gives another perspective on their positivity properties. (In fact, Skandera proved that every dual canonical basis element is, up to a power of $\det^{-1}$, a Kazhdan-Lusztig immanant evaluated on matrices with repeated rows and columns.) In light of these facts, \cref{quest:KLImmkPos} becomes a question of the following kind.

\begin{question} \label{quest:CanBasiskPos}
	Suppose some finite subset $S$ of the dual canonical basis is positive on $M \in G$. Which other elements of the dual canonical basis are positive on $M$? In particular, what if $S$ consists of the generalized minors corresponding to the first $k$ fundamental weights and their images under the Weyl group action?
\end{question}

These questions have a similar flavor to positivity tests arising from cluster algebras, which is different than the approach we take here. The coordinate ring of $GL_n$ is a cluster algebra, with some clusters given by double wiring diagrams \cite{CA3}. The minors are cluster variables. If we restrict our attention to the minors of size at most $k$ in the clusters for $GL_n$, we obtain a number of sub-cluster algebras, investigated by the first author in \cite{kPosTests}. The cluster monomials in those sub-algebras will be positive on $k$-positive matrices. Thus, we could ask the following to find more permutations $v$ where $\Imm_v$ is $k$-positive.

\begin{question}
    When is $\Imm_v$ a cluster monomial in $\mathcal{O}(GL_n(\CC))$?  When is $\Imm_v$ a cluster monomial in a $k$-positivity cluster sub-algebra from \cite{kPosTests}?
\end{question}

Interestingly, the Kazhdan-Lusztig immanants of 123-, 2143-, 1423-, and 3214-avoiding permutations do appear in sub-cluster algebras of this kind for $k=2$. In general, however, it is not known if $\Imm_v$ is a cluster variable in the cluster structure on $GL_n$, or in the sub-cluster algebras using only minors of size at most $k$. It is conjectured that cluster monomials form a (proper) subset of the dual canonical basis, so the cluster algebra approach would at best provide a partial answer to \cref{quest:CanBasiskPos}.

\section{Proof of Proposition~\ref{prop:alternatingBoxesAnti}}\label{sec:proof1}

In order to simplify the proofs in the next two sections, we will consider the graphs of lower intervals $[e, w]$ rather than upper intervals $[v, w_0]$. As the next lemma shows, the two are closely related.

\begin{lem} \label{lem:reverseCols}
Let $v \in S_n$. Then $\gr{v}{w_0}=\{(i, w_0(j)): (i, j) \in \gr{e}{w_0v}\}$. In other words, $\gr{v}{w_0}$ can be obtained from $\gr{e}{w_0v}$ by reversing the columns of the $n \times n$ grid.
\end{lem}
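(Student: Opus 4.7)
The plan is to use the standard anti-automorphism of Bruhat order given by left multiplication by $w_0$: for $u_1, u_2 \in S_n$, one has $u_1 \leq u_2$ if and only if $w_0 u_2 \leq w_0 u_1$ (see e.g.\ \cite{BB}). Applying this with $u_2 = w_0$ shows that the map $u \mapsto w_0 u$ is a bijection between the upper interval $[v, w_0]$ and the lower interval $[e, w_0 v]$.

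First I would translate this bijection into a statement about graph entries. Since $w_0$ is an involution, for $u \in [v, w_0]$ and $u' := w_0 u \in [e, w_0 v]$, we have $u(i) = w_0(u'(i))$ for every $i \in [n]$. In other words, the point $(i, u(i)) \in \gr{v}{w_0}$ corresponds to the point $(i, u'(i)) \in \gr{e}{w_0 v}$ exactly by replacing the column coordinate $c$ by $w_0(c) = n+1-c$.

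Then I would verify both set containments directly. For the forward direction, take $(i, u(i)) \in \gr{v}{w_0}$ with $u \in [v, w_0]$; setting $u' := w_0 u \in [e, w_0 v]$, the point $(i, u'(i)) = (i, w_0(u(i)))$ lies in $\gr{e}{w_0 v}$, and then $(i, u(i)) = (i, w_0(u'(i)))$ exhibits it as an element of the right-hand side. Conversely, any $(i, j) \in \gr{e}{w_0 v}$ comes from some $u' \in [e, w_0 v]$ with $u'(i) = j$; the preimage $u := w_0 u' \in [v, w_0]$ satisfies $u(i) = w_0(j)$, so $(i, w_0(j)) \in \gr{v}{w_0}$, as required.

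There is no real obstacle here: the entire content is packaged in the anti-isomorphism of Bruhat order under left multiplication by $w_0$, and once that is invoked the column-reversal claim reduces to bookkeeping with the involution $w_0$. The only point to be careful about is that we are multiplying on the \emph{left}, since this is what interacts with the column index (i.e.\ with the values of the permutation), rather than on the right, which would shuffle the row indices instead.
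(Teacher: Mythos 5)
Your proof is correct and uses the same key idea as the paper — the paper's entire proof is the single sentence that the claim ``follows immediately from the fact that left multiplication by $w_0$ is an anti-automorphism of the Bruhat order,'' and your argument is exactly the spelled-out version of that observation.
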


\begin{proof}
This follows immediately from the fact that left multiplication by $w_0$ is an anti-automorphism of the Bruhat order.
\end{proof}

Since left-multiplication by $w_0$ takes non-inversions to inversions, we have an analogue of \cref{lem:graphCharacterization} for $\gr{e}{w}$.

\begin{lem} \label{lem:lowerIntervalGraphCharacterization}
Let $w \in S_n$. Then $\gr{e}{w}=\gro{w} \cup \{(i, j) \in [n]^2: (i, j) \text{ is sandwiched by an inversion of } w\}$.
\end{lem}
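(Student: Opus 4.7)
The plan is to deduce this lemma directly from Lemma~\ref{lem:graphCharacterization} and Lemma~\ref{lem:reverseCols}, using that left multiplication by $w_0$ is an anti-automorphism of Bruhat order and also swaps inversions with non-inversions. Conceptually, the statement for lower intervals should just be the statement for upper intervals pulled back along $u \mapsto w_0 u$.

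First I would set $v := w_0 w$ and apply Lemma~\ref{lem:graphCharacterization} to get
\[
\gr{w_0 w}{w_0} = \gro{w_0 w} \cup \bigl\{(i,j) : (i,j) \text{ is sandwiched by a non-inversion of } w_0 w\bigr\}.
\]
Since $w_0$ is an involution, Lemma~\ref{lem:reverseCols} applied with $v = w_0 w$ says that $\gr{e}{w}$ is obtained from $\gr{w_0 w}{w_0}$ by applying the column-reversing map $j \mapsto w_0(j) = n+1-j$. So I would push this map through each piece of the right-hand side above.

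The image of $\gro{w_0 w}$ under $j \mapsto w_0(j)$ is exactly $\gro{w}$, since $(i,(w_0 w)(i))$ maps to $(i, w_0(w_0 w(i))) = (i, w(i))$. For the sandwiched piece I would check two things, both immediate from the identity $(w_0 w)(k) = n+1-w(k)$: (a) the pair $\langle k,l\rangle$ is a non-inversion of $w_0 w$ if and only if it is an inversion of $w$, because $(w_0 w)(k) < (w_0 w)(l) \iff w(k) > w(l)$; and (b) the sandwich condition $(w_0 w)(k) \leq j \leq (w_0 w)(l)$ is equivalent, after substituting $j' := w_0(j)$, to $w(k) \geq j' \geq w(l)$, which is exactly the sandwich condition for the inversion $\langle k,l\rangle$ of $w$ (the row condition $k \leq i \leq l$ is unaffected by the column map). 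Combining (a) and (b) shows that the image of the sandwiched piece is precisely the set of $(i,j')$ sandwiched by some inversion of $w$, yielding the lemma.

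There is essentially no obstacle: the whole argument is a translation under the Bruhat anti-automorphism, and all the work was already done in Lemma~\ref{lem:graphCharacterization}. The only thing to be careful about is keeping the inequalities straight when $w_0$ reverses orders on the column coordinate, and making sure that ``non-inversion of $w_0 w$'' and ``inversion of $w$'' refer to the same set of pairs $\langle k,l\rangle$.
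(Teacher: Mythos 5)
Your proof is correct and is exactly the paper's intended argument: the paper states Lemma~\ref{lem:lowerIntervalGraphCharacterization} without a formal proof, remarking only that it follows from Lemma~\ref{lem:graphCharacterization} via Lemma~\ref{lem:reverseCols} because left multiplication by $w_0$ swaps inversions and non-inversions. Your write-up simply makes that translation explicit, carefully tracking how the column reversal $j \mapsto w_0(j)$ carries $\gro{w_0 w}$ to $\gro{w}$ and non-inversion sandwiches of $w_0 w$ to inversion sandwiches of $w$.
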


Left-multiplication by $w_0$ takes the anti-diagonal to the diagonal, so we also have an analogue of bounding boxes and \cref{lem:boundingboxes}. 

\begin{defn}
Let $w \in S_n$. Define $B_{i, w_i} \subseteq[n]^2$ to be the square region with corners $(i, i)$, $(i, w_i)$, $(w_i, i)$, $(w_i, w_i)$. We call $B_{i, w_i}$ a \emph{bounding box} of $\gr{e}{w}$ if it is not properly contained in any $B_{j, w_j}$. In this situation, we call $(i, w_i)$ a \emph{spanning corner} of $\gr{e}{w}$. We denote the set of spanning corners of $\gr{e}{w}$ by $\overline{S}$.
\end{defn}

\begin{lem} \label{lem:boundingboxesDiag}
Let $w \in S_n$. Then
\[\gr{e}{w} \subseteq \bigcup_{\wire{i}{w_i} \in \overline{S}} B_{i, w_i}.
\]
\end{lem}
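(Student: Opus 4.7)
The plan is to reduce this lemma to the already-established \cref{lem:boundingboxes} via the column-reversal correspondence of \cref{lem:reverseCols}. Setting $v := w_0 w$ (so $w = w_0 v$), \cref{lem:reverseCols} gives a bijection $\gr{e}{w} \leftrightarrow \gr{v}{w_0}$ via the involution $\iota : (i, j) \mapsto (i, n-j+1)$ on $[n]^2$. This involution swaps the diagonal with the anti-diagonal, so it is the natural candidate for transporting the diagonal-based structures in the current setting to the anti-diagonal ones from \cref{sec:mainthm}.

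The first step is to verify that $\iota$ carries the diagonal bounding box $B_{i, w_i}$ to the anti-diagonal bounding box $\bbox{i, v_i}$. Since $v_i = n - w_i + 1$, a direct check on the four corners $(i,i), (i, w_i), (w_i, i), (w_i, w_i)$ of $B_{i, w_i}$ shows that their images under $\iota$ are exactly the four corners $(i, v_i), (i, n-i+1), (n-v_i+1, v_i), (n-v_i+1, n-i+1)$ defining $\bbox{i, v_i}$. Because $\iota$ is a bijection preserving containment of regions, $B_{j, w_j} \subseteq B_{i, w_i}$ if and only if $\bbox{j, v_j} \subseteq \bbox{i, v_i}$. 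Consequently $(i, w_i) \in \overline{S}$ if and only if $(i, v_i) \in S$.

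The second step is to apply \cref{lem:boundingboxes} to $v$, which gives $\gr{v}{w_0} \subseteq \bigcup_{(i, v_i) \in S} \bbox{i, v_i}$, and then pull this containment back through $\iota$ to obtain
\[
\gr{e}{w} \;\subseteq\; \bigcup_{(i, w_i) \in \overline{S}} B_{i, w_i},
\]
which is exactly the conclusion. Alternatively, one could mimic the proof of \cref{lem:boundingboxes} directly, using \cref{lem:lowerIntervalGraphCharacterization} in place of \cref{lem:graphCharacterization} and replacing ``non-inversion'' with ``inversion'' throughout; the only nontrivial step is the case analysis showing that for an inversion $\lrangle{k,l}$ with $(k, w_k)$ and $(l, w_l)$ corners of $\gr{e}{w}$, the sandwiching rectangle is contained in $B_{k, w_k} \cup B_{l, w_l}$, after which the containment is absorbed into boxes over spanning corners.

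I do not anticipate a serious obstacle here: the substance is identical to \cref{lem:boundingboxes}, and the only thing requiring care is bookkeeping to confirm that $\iota$ intertwines the two notions of bounding box and spanning corner. Once that dictionary is written down, the conclusion is immediate.
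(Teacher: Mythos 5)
Your proof is correct and matches the paper's intended argument: the paper introduces \cref{lem:boundingboxesDiag} as the ``analogue'' of \cref{lem:boundingboxes} obtained by column reversal (via \cref{lem:reverseCols}), and the correspondence between $B_{i,w_i}$ and $\bbox{i,v_i}$ that you verify is exactly the content of the paper's subsequent \cref{rmk:bounding-boxes}. Your transfer argument is the same reasoning, just written out explicitly rather than left implicit.
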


\begin{defn}\label{defn:bounding-box-diag}
A bounding box $B_{i, w_i}$ is colored \emph{red} if $i > w_i$, \emph{green} if $i=w_i$, and \emph{blue} if $i<w_i$. If $w^{-1}(i)=w_i$ (so that both $(i, w_i)$ and $(w_i, i)$ are spanning corners), then the bounding box $B_{i, w_i}=B_{w_i, i}$ is both red and blue. If a bounding box is both red and blue, we also call it \emph{purple}.
\end{defn}

\begin{rmk}\label{rmk:bounding-boxes}
Note that if $w=w_0v$, then $\bbox{i,v_i}$ is a bounding box of $\gr{v}{w_0}$ if and only if $B_{i,w_i}$ is a bounding box of $\gr{e}{w}$.  Further, $\bbox{i, v_i}=\{(r, w_0(c)): (r, c) \in B_{i, w_i}\}$ and $\bbox{i, v_i}$ has the same color as $B_{i, w_i}$.
\end{rmk}

We introduce one new piece of terminology.

\begin{defn}
Let $w \in S_n$. The \emph{span} of $(i, w_i)$, denoted by $\spn{i,w_i}$, is $[i, w_i]$ if $i\leq w_i$ and is $[w_i, i]$ otherwise. We say $(i, w_i)$ \emph{spans} $(j, w_j)$ if $\spn{i, w_i}$ contains $\spn{j, w_j}$ (equivalently, if $B_{i, w_i}$ contains $B_{j, w_j}$).
\end{defn}

Rather than proving \cref{prop:alternatingBoxesAnti} directly, we instead prove the following: 

\begin{prop} \label{prop:alternatingboxesDiag}
Suppose $w \in S_n$ avoids 3412 and is not contained in a maximal parabolic subgroup of $S_n$. Order the bounding boxes of $\gr{e}{w}$ by the row of the northwest corner. If $\gr{e}{w}$ has more than one bounding box, then they alternate between blue and red and there are no purple bounding boxes.
\end{prop}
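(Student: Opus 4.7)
The plan is to prove the two claims of the proposition separately: (1) no purple bounding box exists, and (2) adjacent bounding boxes alternate in color. Both arguments proceed by contradiction, exhibiting a $3412$ pattern in $w$ whenever the conclusion fails. The central tool is the spanning-corner condition: if $B_{i,w_i}$ is a bounding box with $i\le w_i$ (blue), then $k<i$ implies $w_k<w_i$ and $k>w_i$ implies $w_k>i$; the symmetric constraints hold for red. Since the involution $w\mapsto w^{-1}$ preserves both $3412$-avoidance and the non-parabolic hypothesis while swapping red and blue bounding boxes, it suffices in each part to treat one color case.

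For part (1), suppose $B_{i,w_i}$ is purple, so $w(i)=w_i$, $w(w_i)=i$, and $i<w_i$. Define
\[
K=\{k<i:w_k\in(i,w_i)\}, \quad B'=\{k\in(i,w_i):w_k<i\},
\]
and $A=\{k\in(i,w_i):w_k>w_i\}$, $L=\{l>w_i:w_l\in(i,w_i)\}$. A short counting argument using the spanning conditions yields $|K|=|B'|$ and $|A|=|L|$, and moreover $K=\emptyset$ iff $w([1,i-1])=[1,i-1]$ while $A=\emptyset$ iff $w([1,w_i])=[1,w_i]$. Because there is more than one bounding box, the case $i=1$ together with $w_i=n$ is excluded. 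If $i\ge 2$, the non-parabolic hypothesis forces $K\neq\emptyset$, hence $B'\neq\emptyset$; taking $k\in K$ and $b\in B'$, the positions $k<i<b<w_i$ carry values that sort as $w_b<i<w_k<w_i$, producing the pattern $3,4,1,2$ at $(k,i,b,w_i)$. If $i=1$ then $w_i<n$ and non-parabolicity gives $A\neq\emptyset$, hence $L\neq\emptyset$; the same idea with $a\in A$, $l\in L$ yields a $3412$ pattern at $(1,a,w_1,l)$.

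For part (2), suppose for contradiction that $B_1=B_{a,b}$ and $B_2=B_{c,d}$ are adjacent, both blue, with $a<c$; then $b<d$ because neither box contains the other. Adjacency forbids any bounding box with NW-corner row strictly between $a$ and $c$. Using the spanning conditions, any dot $(p,w_p)$ with $p\in(a,c)$ and $w_p>p$ must satisfy $w_p\le b$: if $w_p>b$, then $B_{p,w_p}$ cannot be contained in $B_1$ (because $w_p>b$) nor in $B_2$ (because $p<c$), and containment in any other spanning box would force that box to strictly contain $B_1$, contradicting the spanning condition on $B_1$; hence $B_{p,w_p}$ would itself be a bounding box with NW row in $(a,c)$, violating adjacency. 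Cascading this analysis, in the disjoint-span sub-case $b<c$ one shows that $c=b+1$ is forced and then $w([1,b])=[1,b]$, contradicting non-parabolicity. In the overlap sub-case $c\le b$, the forced constraints on positions in $(a,c)$, combined with the non-purpleness of $B_1$ and $B_2$ established in part (1), pin down a position $e>b$ with $w_e$ in a prescribed subinterval of $(a,b)$; the four-tuple $(a,c,b,e)$ (or a close variant) then realizes a $3412$ pattern.

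The main obstacle will be the detailed case analysis in part (2): tracking the cascade of containment-versus-spanning dichotomies for dots in and around the strip between $a$ and $c$, and then using pigeonhole together with the non-purpleness conclusion from part (1) to locate the position $e$ that completes a $3412$ pattern in the overlap sub-case. Part (1) is comparatively routine---once the counting identity $|K|=|B'|$ is recorded, the $3412$ pattern is written down directly at the four points $(k,i,b,w_i)$.
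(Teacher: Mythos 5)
Your approach is genuinely different from the paper's. The paper first reduces to 321- and 3412-avoiding permutations via \cref{prop:deleteinternaldots}, and then in \cref{prop:alternatingboxes321} uses Tenner's reduced-expression characterization (\cref{prop:Tenner}) together with \cref{lem:321smalltranspositions}. You instead argue directly, exhibiting a 3412 pattern whenever the conclusion fails. Part (1) is complete and correct: counting preimages of $[1,i-1]$ (which the spanning condition confines to $[1,i-1]\cup(i,w_i)$) gives $|K|=|B'|$, non-parabolicity supplies $k\in K$ and $b\in B'$, and the positions $k<i<b<w_i$ with $w_b<i<w_k<w_i$ do form a 3412; the $i=1$ case is symmetric. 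This is cleaner than the paper's treatment of purple boxes. (One minor omission: for the statement ``alternate between blue and red'' to even parse you should also record that green boxes are impossible, which follows at once from non-parabolicity.)

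Part (2), however, is incomplete, and you say so yourself. The disjoint sub-case $b<c$ works as sketched: the uncovered diagonal entries force $c=b+1$, every bounding box through $B_1$ lies in $[1,b]^2$, so $w([1,b])=[1,b]$, contradicting non-parabolicity. But the overlap sub-case $a<c\le b<d$, which carries the real content, is only gestured at. You assert that non-purpleness together with the constraints in the strip $(a,c)$ ``pin down a position $e>b$ with $w_e$ in a prescribed subinterval of $(a,b)$'' without saying which subinterval, or why such an $e$ must exist. For $(a,c,b,e)$ to realize a 3412 you need $w_b<w_e<b$, yet nothing established so far forces $w_b<b$; likewise the locations of $w^{-1}(a)$, $w^{-1}(c)$ and $w_d$ are not pinned down, and the analysis becomes delicate when $B_1$ is not the first bounding box, so that there are boxes with NW corner above row $a$ to account for. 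This overlap argument is exactly where the difficulty lies (it is the counterpart of the long reduced-expression computation in the paper's proof of \cref{prop:alternatingboxes321}); as written it is a plan rather than a proof, and you should either carry out the overlap case in full or fall back to the paper's reduction to the 321-avoiding setting.
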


The first step to proving this is analyzing the bounding boxes of $\gr{e}{w}$ when $w$ is a 321- and 3412-avoiding permutation. We'll need the following result of \cite{Ten} and a lemma.

\begin{prop}\cite[Theorem 5.3]{Ten} \label{prop:Tenner} Let $w \in S_n$. Then $w$ avoids 321 and 3412 if and only if for all $a \in [n-1]$, $s_a$ appears at most once in every (equivalently, any) reduced expression for $w$.

\end{prop}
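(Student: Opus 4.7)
The plan is to prove \cref{prop:Tenner} in two directions, using Matsumoto's theorem to justify the ``every (equivalently, any)'' clause and the Bruhat subword property to handle both implications. Matsumoto's theorem connects any two reduced words for $w \in S_n$ via commutation moves $s_a s_b = s_b s_a$ (for $|a-b|\geq 2$) and braid moves $s_a s_{a+1} s_a = s_{a+1} s_a s_{a+1}$; commutations preserve the multiset of letters, while braid moves require some letter to appear twice on each side. Consequently, if some reduced word for $w$ has each $s_a$ appearing at most once, no braid move can be applied to it, and every other reduced word---obtained purely by commutations---inherits this property. This immediately reduces the proposition to a statement about a single reduced word.

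For the forward implication ``$w$ avoids $321$ and $3412 \Rightarrow$ some reduced word of $w$ has each $s_a$ at most once,'' I would induct on $\ell(w)$. The base case $\ell(w) = 0$ is trivial. For the inductive step, pick a right descent $s_a$ of $w$ (so $ws_a < w$, equivalently $w(a) > w(a+1)$). A direct case analysis on the interaction of the positions of a hypothetical $321$ or $3412$ pattern in $ws_a$ with $\{a, a+1\}$, using the descent inequality, shows that $ws_a$ itself avoids both patterns, so by induction it admits a reduced word $R'$ with distinct letters. The concatenation $R' s_a$ is a reduced word for $w$. If $s_a \notin R'$ we are done. Otherwise write $R' = R_1\, s_a\, R_2$ and commute the trailing $s_a$ of $R' s_a$ leftward through $R_2$. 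Since $R_2$ contains at most one occurrence of each of $s_{a-1}$ and $s_{a+1}$ (being part of a distinct-letter word) and all other letters commute with $s_a$, the rearrangement produces a reduced word for $w$ containing one of the substrings $s_a s_{a+1} s_a$, $s_a s_{a-1} s_a$, or $s_a s_{a-1} s_{a+1} s_a$. Direct computation identifies these as reduced expressions for $(a, a+2)$, $(a-1, a+1)$, and $(a-1, a+1)(a, a+2)$ respectively, so by the subword property of Bruhat order each of these elements is $\leq w$. A short rank-function analysis using the Bruhat inequalities $r_{ij}(u) \leq r_{ij}(w)$ then exhibits explicit positions in $w$ realizing a $321$ or $3412$ pattern, contradicting the hypothesis. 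Thus $s_a \notin R'$ and $R' s_a$ is the desired distinct-letter reduced word.

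For the converse, suppose $w$ has a distinct-letter reduced word but contains a forbidden pattern. A $321$ pattern at positions $i < j < k$ forces $(a, a+2)$ or $(a-1, a+1)$ (for a suitable $a$ relating to the values $w(i), w(j), w(k)$) to lie below $w$ in Bruhat order via the pattern-to-subword dictionary; since each such transposition has Coxeter length $3$ but support of size only $2$, every one of its reduced expressions must repeat a letter, and so any reduced word for $w$ whose subword is such a reduced expression must also repeat a letter, contradicting the distinct-letter assumption. The $3412$ case is analogous with the element $(a-1, a+1)(a, a+2)$ of length $4$ and support size $3$.

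The main obstacle is the rank-function step in the inductive argument for the forward direction: passing from the Bruhat comparison $u \leq w$ (for $u$ one of the three small elements above) to the existence of concrete positions in $w$ realizing a $321$ or $3412$ pattern is not immediate, since the pattern positions need not be adjacent to the Coxeter index $a$ and can be widely separated. Handling the case $u = (a-1, a+1)(a, a+2)$ is particularly delicate, as its Bruhat inequalities simultaneously constrain several rank functions, and one must sift through the resulting conditions to identify the four positions in $w$ witnessing the $3412$ pattern.
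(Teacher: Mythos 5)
This proposition is not proved in the paper: it is quoted as \cite[Theorem 5.3]{Ten}, so there is no internal argument to compare against. Judging your sketch on its own merits: the high-level scaffolding (Matsumoto's theorem to justify ``every/any,'' induction on length for the forward direction, and the Bruhat subword property to extract the small elements $(a,a+2)$, $(a-1,a+1)$, $(a-1,a+1)(a,a+2)$ below $w$) is sound, and the observation that these elements have length strictly larger than the size of their support, hence force a repeated letter, is exactly the right reason they are obstructions. Your claim that $ws_a$ inherits $321$- and $3412$-avoidance when $s_a$ is a right descent of $w$ is also correct (a 321 or 3412 pattern in $ws_a$ always pulls back to a forbidden pattern in $w$ after unswapping positions $a,a+1$).

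The genuine gap is the translation step that both directions lean on. You assert that a $321$ pattern at positions $i<j<k$ ``forces $(a,a+2)$ or $(a-1,a+1)$ to lie below $w$ in Bruhat order \dots for a suitable $a$ relating to the values $w(i),w(j),w(k)$.'' That choice of $a$ cannot depend on values alone: if one naively takes $a=w(j)-1$, the Bruhat rank inequalities for $(a,a+2)\le w$ demand both a \emph{position} $\le a$ carrying a value $\ge a+2$ and a \emph{position} $>a+1$ carrying a value $\le a$, and the pattern positions $i,j,k$ need not sit there. (For example $w=51423$ has a $321$ at positions $1,3,4$ with values $5,4,2$, yet $(1,3)\not\le w$; one must instead find $(2,4)\le w$, which has nothing to do with $w(j)-1$.) A correct argument exists --- for $321$ one can route through Billey--Jockusch--Stanley's theorem that $321$-avoidance is equivalent to full commutativity, so a $321$ pattern forces a reduced word with a short-braid \emph{substring} $s_a s_{a+1} s_a$, which is in particular a subword, yielding $(a,a+2)\le w$ --- but this is not a ``dictionary'' lookup, and your $3412$ case is not ``analogous,'' since a $321$-avoiding permutation is fully commutative yet can contain $3412$ (e.g.\ $w=3412$ itself), so the full-commutativity route gives nothing and a separate argument is required to produce $(a-1,a+1)(a,a+2)\le w$. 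The same issue sits inside the forward direction's ``rank-function analysis'': you correctly identify it as the main obstacle, but as written the proof is incomplete there too. So the outline is right in spirit, but the core lemma --- that the forbidden patterns are precisely those detected by the small Bruhat elements $(b,b+2)$ and $(a-1,a+1)(a,a+2)$ --- is stated rather than proved, and the indicated choice of index $a$ would not work without additional maneuvering.
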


\begin{lem} \label{lem:321smalltranspositions}
Suppose $w \in S_n$ avoids 321 and 3412. If for some $i \in [n]$, $w^{-1}(i)=w_i$, then $|i-w_i|=1$.
\end{lem}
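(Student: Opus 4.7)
The plan is to write $j := w_i$, so the hypothesis $w^{-1}(i) = w_i$ becomes $w_j = i$; WLOG $i < j$, and I will show $j \leq i + 1$. Assume for contradiction that $j \geq i + 2$; I will produce a forbidden $321$ or $3412$ pattern in $w$.

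First I extract structural constraints from the two pattern-avoidances. For each $k \in (i, j)$, $321$-avoidance applied to the triple $i < k < j$ (carrying values $j, w_k, i$) forces $w_k \notin (i, j)$. For each $k \in (i, j)$ with $w_k > j$, $3412$-avoidance applied to the quadruple $i < k < j < w^{-1}(k)$ (values $j, w_k, i, k$) forces $w^{-1}(k) < i$; symmetrically, $w_k < i$ forces $w^{-1}(k) > j$. Letting $P_- := \{k \in (i, j) : w_k < i\}$ and $P_+ := \{k \in (i, j) : w_k > j\}$, we have $(i, j) = P_- \sqcup P_+$, and a final $3412$-check on $i < k_1 < k_2 < j$ with $w_{k_1} > j$ and $w_{k_2} < i$ shows $P_-$ precedes $P_+$ positionally.

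I then split into three cases. \textbf{(i)} Both $P_\pm$ are nonempty, forcing $j \geq i + 3$. Pick $k \in P_-$ and $k' \in P_+$; then $k < k'$ as integers, $w^{-1}(k) > j$, and $w^{-1}(k') < i$. The four positions $w^{-1}(k'), i, j, w^{-1}(k)$ carry values $k', j, i, k$ with $i < k < k' < j$, so they form a $3412$ pattern. \textbf{(ii)} $P_- = \emptyset$, so $P_+ = (i, j)$ and all $j - i - 1$ values in $(i, j)$ sit at positions $< i$. Moreover, a value $v < i$ at a position $> j$ would give a $321$ pattern on $i < j < w^{-1}(v)$ (values $j, i, v$), so each of the $i - 1$ values below $i$ also sits at a position $< i$. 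Thus $j - 2$ distinct values must fit into $i - 1$ positions, impossible since $j \geq i + 2$. \textbf{(iii)} $P_+ = \emptyset$ is symmetric: apply (ii) to $w^{-1}$, which also avoids $321$ and $3412$ (both patterns are self-inverse) and likewise swaps $i$ and $j$.

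The main technical point is the careful bookkeeping distinguishing positions from values and tracking which pattern rules out which configuration; once the structural constraints in the second paragraph are in place, each case is a short pattern-push or counting argument. The edge cases $j = i + 2$ (where $(i, j)$ is a singleton, so at most one of $P_\pm$ is nonempty, putting us in (ii) or (iii)) and $i = 1$ or $j = n$ (where ``positions $< i$'' or ``values $> j$'' become empty) are all absorbed cleanly into the count in (ii)/(iii).
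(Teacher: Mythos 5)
Your proof is correct and takes a genuinely different route from the paper's. The paper compares $w$ to the transposition $t = (i\ w_i)$ in Bruhat order, shows $w \geq t$, and then invokes Tenner's theorem (avoiding $321$ and $3412$ is equivalent to every reduced word using each $s_a$ at most once): since any reduced word of $t$ with $|i-w_i|>1$ must repeat a simple transposition, and any reduced word of $w$ contains a reduced word of $t$ as a subword, this forces $|i-w_i|=1$. Your argument instead stays entirely at the level of one-line notation, deriving structural constraints directly from the patterns: the values at positions strictly between $i$ and $j=w_i$ must land outside $(i,j)$, they split into $P_-\sqcup P_+$ with $P_-$ preceding $P_+$, and then each of the three cases (both nonempty, $P_-$ empty, $P_+$ empty) collapses to a forbidden $3412$ or a pigeonhole contradiction. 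The paper's proof is shorter but imports a nontrivial external result (Tenner's boolean characterization, Proposition 5.3 of that paper); yours is longer and more case-heavy but self-contained and purely elementary, not touching Bruhat order or reduced expressions at all. Both are valid; I checked the $3412$ pattern in case (i) (positions $w^{-1}(k') < i < j < w^{-1}(k)$ carrying $k', j, i, k$ with $i<k<k'<j$) and the pigeonhole count in case (ii) ($j-2$ values forced into $i-1$ positions), and both hold, as does the reduction of (iii) to (ii) via $w^{-1}$ since $321$ and $3412$ are each self-inverse patterns.
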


\begin{proof}

Suppose $w^{-1}(i)=w_i$, and let $t$ be the transposition sending $i$ to $w_i$, $w_i$ to $i$, and fixing everything else. We can assume that $i<w_i$. We compare $w$ and $t$ in the Bruhat order by comparing $w[j]$ and $t[j]$ for $j \in [n]$. 

For two subsets $I=\{i_1< \dots < i_r\}$, $K=\{k_1< \dots < k_r\}$, we say $I \leq J$ if $i_j\leq k_j$ for all $j \in [r]$. For two permutations $v, w \in S_n$, we have $v \leq w$ if and only if for all $j \in [n]$, $v[j] \leq w[j]$ \cite[Theorem 2.6.1]{BB}.

For $j<i$ and $j\geq w_i$, $t[j]=[1, j]$, and so clearly $t[j]\leq w[j]$. For $i \leq j <w_i $, $t[j]=[1, i-1] \cup [i+1, j] \cup \{w_i\}$. Let $t[j]=\{a_1 < a_2 < \cdots < a_j\}$ and $w[j]=\{b_1 < b_2 < \cdots < b_j\}$. Notice that $a_j=w_i$ and $w_i \in w[j]$, so we definitely have that $a_j \leq b_j$. For the other inequalities, suppose $w[j] \cap [1, i]=\{b_1, \dots, b_r\}$. Since $i$ is not in $w[j]$, $r \leq i-1$. This implies that $b_k \leq a_k$ for $k \leq r$. It also implies that $a_{r+k} \leq i+k$ and $b_{r+k} \geq i+k$ for $r+k<j$, which establishes the remaining inequalities. Thus, $w \geq t$.

 Since $w \geq t$, every reduced expression for $w$ has a reduced expression for $t$ as a subexpression. Thus, \cref{prop:Tenner} implies $t$ must have a reduced expression in which each simple transposition appears once.

Now, $t$ has a reduced expression which is a palindrome; it is length $2c+1$ and contains at most $c+1$ simple transpositions. Every reduced expression of $t$ has the same length and contains the same set of simple transpositions. So if $c>0$, in each reduced expression, some simple transposition appears twice. We conclude that $\ell(t)=1$ and $|i -w_i|=1$.

\end{proof}

\begin{prop} \label{prop:alternatingboxes321}
Suppose $w \in S_n$ avoids 321 and 3412 and is not contained in any maximal parabolic subgroup. Order the bounding boxes of $\gr{e}{w}$ by the row of their northwest corner. Then no bounding boxes of $\gr{e}{w}$ are green or purple, and they alternate between red and blue.
\end{prop}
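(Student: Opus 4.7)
The plan is to prove the three assertions of the proposition (no green bounding boxes, no purple bounding boxes, and alternation) separately.

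For the no-green claim, suppose $B_{i,i}$ is a bounding box. If $i \in \{1, n\}$, then $w$ fixes $1$ or $n$, placing $w$ in a maximal parabolic and contradicting the hypothesis. If $1 < i < n$, the hypothesis $w \notin S_{i-1} \times S_{n-i+1}$ yields some $j < i$ with $w(j) \notin [1, i-1]$; since $w(i) = i$, we must have $w(j) > i$. Then $B_{j, w(j)}$ has range $[j, w(j)]$ properly containing $\{i\}$, so $B_{i,i} \subsetneq B_{j, w(j)}$, contradicting $B_{i,i}$ being a bounding box. (This step uses no pattern avoidance.)

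For the no-purple claim, suppose $B_{i, w_i}$ is purple, so $w$ swaps $i$ and $w_i$ with $i \neq w_i$. By \cref{lem:321smalltranspositions}, $|i - w_i| = 1$; without loss of generality $w_i = i+1$. A direct case check on the dots $(j, w_j)$ for $j \notin \{i, i+1\}$ shows that $B_{i, i+1}$ being a bounding box forces $w_j \leq i-1$ for $j < i$ and $w_j \geq i+2$ for $j > i+1$. Hence $w$ preserves $[1, i-1]$ and $[i+2, n]$, so $w \in S_{i-1} \times S_2 \times S_{n-i-1}$, which is contained in a maximal parabolic (with the edge cases $i = 1$ and $i = n-1$ giving $w \in S_2 \times S_{n-2}$ and $S_{n-2} \times S_2$ respectively) — again a contradiction.

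For the alternation claim, note that $w \mapsto w^{-1}$ preserves 321- and 3412-avoidance and the non-parabolic hypothesis, preserves the set of bounding boxes, and swaps red with blue. By this symmetry it suffices to rule out two consecutive blue bounding boxes. Suppose $B_t = B_{p_t, q_t}$ and $B_{t+1} = B_{p_{t+1}, q_{t+1}}$ are consecutive and both blue, so $w(p_t) = q_t$, $w(p_{t+1}) = q_{t+1}$, $p_t < p_{t+1}$, and $q_t < q_{t+1}$ (the latter from non-nesting and the NW-row ordering). A preliminary step establishes that the ranges overlap, $p_{t+1} \leq q_t$: since $w$ does not preserve $[1, i]$ for any $i \in [1, n-1]$, each such $i$ yields some $a \leq i$ with $w(a) > i$, producing a blue dot $(a, w(a))$ whose enclosing bounding box has range $\supseteq [i, i+1]$; applied at $i = q_t$ and combined with the orderings $p_1 < p_2 < \cdots$ and $q_1 < q_2 < \cdots$ of NW and SE rows of the bounding boxes, this forces $p_{t+1} \leq q_t$. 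With overlap in hand, set $c = w^{-1}(p_t)$ and $d = w^{-1}(p_{t+1})$; the blue spanning corner conditions at $(p_t, q_t)$ and $(p_{t+1}, q_{t+1})$ restrict $c$ and $d$ to specific intervals. A case analysis on the positions of $c$ and $d$ relative to $p_t, p_{t+1}, q_t$ produces in each sub-case either an occurrence of the pattern $321$ or $3412$ in $w$ (contradicting the avoidance hypothesis) or a spanning corner with NW row strictly in $(p_t, p_{t+1})$ (contradicting the consecutiveness of $B_t$ and $B_{t+1}$). The main obstacle is this final case analysis, which is technical but tractable given the tight constraints imposed by the two blue spanning corner conditions.
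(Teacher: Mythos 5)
Your arguments for the absence of green and purple bounding boxes are correct and run parallel to the paper's own (the paper deduces $w[i-1]=[i-1]$ from the spanning-corner condition and then invokes non-parabolicity; yours is the contrapositive of the same observation, and your purple argument correctly uses Lemma~\ref{lem:321smalltranspositions} followed by the same check the paper performs for a spanning corner of the form $(i,i+1)$). The reduction via $w \mapsto w^{-1}$ to ruling out two consecutive blue boxes is the same move the paper makes, and the preliminary overlap $p_{t+1} \leq q_t$ can indeed be extracted from the non-parabolic hypothesis roughly as you describe.

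The core of the alternation claim, however, is not proved. You introduce $c = w^{-1}(p_t)$ and $d = w^{-1}(p_{t+1})$ and assert that a case analysis on their positions relative to $p_t, p_{t+1}, q_t$ must produce a 321 pattern, a 3412 pattern, or an intervening bounding box; but you never carry this analysis out, and there is real reason to doubt it closes in the form stated. The paper's proof is not of this shape. After establishing the overlap it runs a reduced-word argument, building a reduced word for $w$ by moving $w_1, w_2, \dots$ into place and invoking Proposition~\ref{prop:Tenner} (each $s_a$ occurs at most once in any reduced word of a $321$- and $3412$-avoiding permutation). That machinery is what forces first the stronger \emph{equality} $p_{t+1} = q_t$ and then the further identity $w(q_t+1) = q_t - 1$; only the latter produces the dot $(q_t+1, q_t-1)$ spanned by neither $B_t$ nor $B_{t+1}$, and hence the intervening bounding box that yields the contradiction. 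Neither of these forced facts is visible from the positions of $c$ and $d$ alone; indeed $c$ and $d$ play no explicit role in the paper's argument. So the dichotomy you claim at the end of each sub-case is unsubstantiated as written. To complete the proof you would need to actually work out the case analysis and verify it closes (I expect you will find you must chain several deductions of the type Proposition~\ref{prop:Tenner} supplies), or else bring in a structural input like Tenner's theorem explicitly.
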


\begin{proof}

If a bounding box $B_{i, w_i}$ is green, then by definition $i=w_i$. The corner $(i, i)$ has maximal span, which implies there are no $(j, w_j)$ with $j <i$ and $w_j>i$. In other words, $w[i-1]=[i-1]$, which would contradict the assumption that $w$ is not contained in a maximal parabolic subgroup.

There are also no spanning corners of the form $(i, i+1)$. Indeed, if $(i, i+1)$ were a spanning corner, then there are no $(j, w_j)$ with $j\leq i-1$ and $w_j>i+1$ or with $j\geq i+1$ and $w_j\leq i-1$. The latter implies that $w_{i+1}=i$, which, together with the first inequality, implies $w[i-1]=[i-1]$, a contradiction. By \cref{lem:321smalltranspositions}, if a bounding box $B_{i, w_i}$ is purple, then $|i-w_i|=1$. This implies a spanning chord of the form $(k, k+1)$, which is impossible, so there are no purple bounding boxes.

So all bounding boxes are either blue or red. Suppose the bounding box $B_{i, w_i}$ is followed by $B_{j, w_j}$ in the ordering specified in the proposition. We suppose for the sake of contradiction that they are the same color. Without loss of generality, we may assume that they are blue, so $i<w_i$ and $j<w_j$. Otherwise, we consider $w^{-1}$ instead of $w$. (By \cref{prop:Tenner}, $w^{-1}$ is also 321- and 3412-avoiding. The span of $(i, w_i)$ and $(w_i, w^{-1}(w_i))$ are the same, so the bounding boxes of $\gr{e}{w^{-1}}$ are the same as the bounding boxes of $\gr{e}{w}$, but with opposite color.)

Since $B_{j, w_j}$ follows $B_{i, w_i}$, there are no pairs $(k, w_k)$ with $k<j$ and $w_i<w_k$. Indeed, such a pair is spanned by neither $(i, w_i)$ nor $(j, w_j)$, so its existence would imply the existence of a bounding box between $B_{i, w_i}$ and $B_{j, w_j}$ or enclosing one of them, both of which are contradictions. In other words, $w[j-1] \subseteq [w_i]$, so we must have $j-1 \leq w_i$. If $j=w_i+1$, then $w[w_i] \subseteq [w_i]$, a contradiction. So we have $i < j \leq w_i < w_j$. 

Now, consider the reduced expression for $w$ obtained by starting at the identity, moving $w_1$ to position 1 using right multiplication by $s_a$'s, then moving $w_2$ to position 2 (if it is not already there) using right multiplication, etc. Note that when $w_k$ is moved to position $k$, no numbers greater than $w_k$ have moved. Also, once $w_k$ is in position $k$, it never moves again. Suppose $w_{i-1}$ has just moved to position $i-1$. Because $(i, w_i)$ is a spanning corner, we have not moved any numbers larger than $w_i$. In other words, $k$ is currently in position $k$ for $k \geq w_i$; in particular, $w_i$ is in position $w_i$. Now, to move $w_i$ to position $i$, we must use the transpositions $s_{w_i-1}, s_{w_i -2}, \dots, s_{i+1}, s_i$ in that order. By \cref{prop:Tenner}, each simple transposition can only be used once in this reduced expression for $w$. Thus, these simple transpositions have not been used before we move $w_i$ to position $i$, so in fact $k$ is in position $k$ for $k >i$ before we move $w_i$ to position $i$.

Now we move $w_i$ to position $i$. Since $s_{w_i-1}, \dots, s_i$ will never be used again in the expression for $w$, we conclude that $w_{i+1}, \dots, w_{w_i-1}$ are already in positions $i+1, \dots, w_i -1$. Note also that the number currently in position $w_i$ is $w_i-1$, since $|i-w_i|>1$. 

Since $w_j>w_i$, $w_j$ is not yet in position $j$. This implies that $j \geq w_i$. We already had that $j \leq w_i$, so in fact they are equal. So after $w_i$ has moved to position $i$, $w_j$ is the next number not yet in the correct position. Recall that for $k >w_i$, $k$ is still in position $k$. So to move $w_j$ to position $w_i$, we use $s_{w_j}, s_{w_j-1}, \dots, s_j$ in that order (since $|j-w_j|>1$, $s_{j+1}$ is on this list of transpositions). Notice that the number in position $j$, which is $w_i-1$, moves to position $j+1$. We cannot use $s_j$ or $s_{j+1}$ again, so $w_i-1$ must be $w_{j+1}$. However, the pair $(j+1, w_i-1)$ satisfies $j+1>j$ and $w_i-1<w_i$, so it is spanned by neither $(i, w_i)$ nor $(j, w_j)$. Say $(j+1, w_i-1)$ is spanned by the spanning corner $(a, w_a)$. Then $(a, w_a)$ spans neither of $(i, w_i), (j, w_j)$, which implies $i < \min(a, w_a)<j<\max(a, w_a)<w_j$. This means exactly that the bounding box order is $B_{i, w_i}, B_{a, w_a}, B_{j, w_j}$, a contradiction.

\end{proof}

The next proposition allows us to extend \cref{prop:alternatingboxes321} to all permutations avoiding 3412.

Recall that $\delta_i:[n] \setminus \{i\} \to [n-1]$ is defined as

\[ 
\delta_i(j)= \begin{cases}
j & j<i\\
j-1 & j>i.
\end{cases}
\]

\begin{figure}
    \centering
    \includegraphics[height=0.2\textheight]{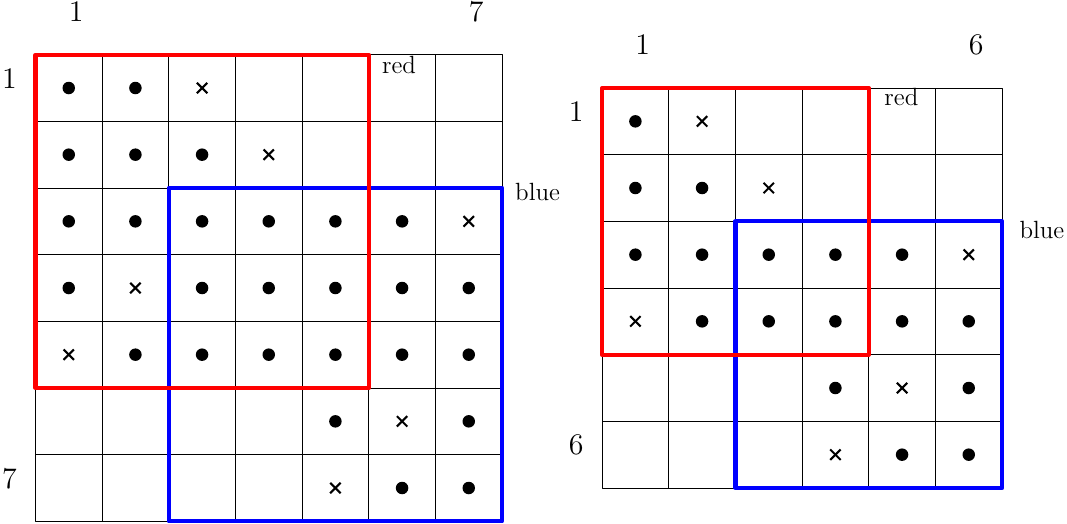}
    \caption{On the left, $\gr{e}{w}$, where $w=3472165$. On the right, $\gr{e}{u}$, where $u=236154$. The permutation $u$ is obtained from $w$ by deleting 2 from the one-line notation of $w$ and shifting the other numbers appropriately. As \cref{prop:deleteinternaldots}} asserts, the spanning corners and bounding boxes of $\gr{e}{w}$ are in bijection with the spanning corners and bounding boxes of $\gr{e}{u}$, respectively, and this bijection is color-preserving.
    \label{fig:deleteInternalDot}
\end{figure}

\begin{prop} \label{prop:deleteinternaldots} Suppose $w \in S_n$ avoids 3412 and let $\wire{i}{w_i}$ be a non-corner of $\gr{e}{w}$. Let $u \in S_{n-1}$ be the permutation $\delta_i(j) \mapsto \delta_{w_i}(w_j)$. Let $\overline{S}_w$ and $\overline{S}_u$ denote the spanning corners of $\gr{e}{w}$ and $\gr{e}{u}$, respectively.

\begin{enumerate}
    \item The pair $\wire{j}{w_j} \in \overline{S}_w$ if and only if $\wire{\delta_i(j)}{\delta_{w_i}(w_j)} \in \overline{S}_u$.
    \item The map $\beta:\{B_{j, w_j}: (j, w_j) \in \overline{S}_w\} \to \{B_{k, u_k}: (k, u_k) \in \overline{S}_u\}$ defined by $B_{j, w_j} \mapsto B_{\delta_i(j),\delta_{w_i}(w_j)}$ is a color-preserving bijection.
    \item Order the bounding boxes of $\gr{e}{w}$ according to the row of their northwest corner. Then $\beta$ also preserves this ordering.
\end{enumerate}

\end{prop}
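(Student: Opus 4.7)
The plan is to transport the bounding-box structure of $\gr{e}{w}$ along the deletion map $(\delta_i, \delta_{w_i})$. I will proceed in three stages, reducing everything to a set-level identity in $[n-1]^2$.

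\textbf{Stage 1 (set-level identity).} I first prove $\gr{e}{w}^{w_i}_i = \gr{e}{u}$ as subsets of $[n-1]^2$. By \cref{lem:lowerIntervalGraphCharacterization}, each is a union of dots together with rectangles $R_{k,l}$ indexed by inversions. Inversions of $u$ are in natural bijection with inversions of $w$ not involving position $i$ via $\delta_i$, and the associated rectangles match under $(\delta_i, \delta_{w_i})$. The technical heart is that rectangles from inversions of $w$ which do involve position $i$ are \emph{redundant}: after deleting row $i$ and column $w_i$, the leftover of each such rectangle is covered by rectangles from other inversions. The non-corner hypothesis provides an inversion $\langle K, L\rangle$ with $K, L \neq i$ strictly sandwiching $(i, w_i)$; then for any inversion $\langle k, i\rangle$ of $w$, the pair $\langle k, L\rangle$ is itself an inversion (since $w_k > w_i > w_L$), and its rectangle covers the leftover of $R_{k, i}$. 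The symmetric case $\langle i, l\rangle$ is handled by $\langle K, l\rangle$. The 3412-avoidance is invoked when $(i, w_i)$ is witnessed only by dot neighbors rather than by a sandwiching inversion, in order to upgrade those witnesses into a usable pair $(K, L)$.

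\textbf{Stage 2 (parts (1) and (3)).} Once the set identity holds, the shift $(\delta_i, \delta_{w_i})$ sends each bounding box $B_{j, w_j}$ of $\gr{e}{w}$ (for $j \neq i$) isomorphically onto $B_{\delta_i(j), \delta_{w_i}(w_j)}$ of $\gr{e}{u}$. Because $\delta_i$ and $\delta_{w_i}$ are order-preserving bijections on their domains, proper containments among these boxes are preserved in both directions, yielding the biconditional in (1). Part (3) then follows because the NW row of $B_{j, w_j}$ is $\min(j, w_j)$ and $\delta_i$ preserves strict order, so the ordering by NW row is transported intact.

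\textbf{Stage 3 (color preservation, part (2)).} The color of $B_{j, w_j}$ is determined by the sign of $w_j - j$; I need $\delta_{w_i}(w_j) - \delta_i(j)$ to carry the same sign. The only way this could fail is if $|w_j - j| = 1$ and $(i, w_i)$ is positioned so that $\delta_i$ and $\delta_{w_i}$ act asymmetrically on $j$ and $w_j$. The non-corner hypothesis forces $(i, w_i)$ to lie inside a bounding box containing both $(j, w_j)$ and $(w_j, j)$, which makes the action symmetric in the coordinates; \cref{lem:321smalltranspositions} disposes of any remaining near-diagonal edge case in the 321-avoiding setting, and 3412-avoidance extends the argument to the general case.

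The main obstacle will be Stage~1, specifically verifying that the substitute inversions $\langle k, L\rangle$ and $\langle K, l\rangle$ genuinely exist in $w$ in full generality. Producing the sandwiching pair $(K, L)$ from the non-corner hypothesis—and in degenerate configurations invoking 3412-avoidance to supply it—is the delicate combinatorial step on which the whole set-level identity rests.
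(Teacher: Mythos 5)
Your Stage~2 has a genuine gap, and it is precisely the step that carries the technical weight of the paper's argument. You claim that because $\delta_i$ and $\delta_{w_i}$ are order-preserving, proper containments among bounding boxes are preserved in both directions. This does not follow. The span of a pair $(j, w_j)$ is $[\min(j, w_j), \max(j, w_j)]$, an interval that interleaves a row index and a column index, and the deletion map applies \emph{different} shifts to the two coordinates. In particular, the deletion does not carry bounding boxes to bounding boxes as regions of the grid: the diagonal corner $(j,j)$ of $B_{j,w_j}$ maps to $(\delta_i(j), \delta_{w_i}(j))$, which lies off the diagonal whenever $j$ is strictly between $i$ and $w_i$. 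So the relation $\spn{k,w_k} \subseteq \spn{j,w_j}$ can a priori be created or destroyed by the deletion, depending on whether $j$, $w_j$, $k$, $w_k$ fall on opposite sides of $i$ or $w_i$. Ruling out the bad configurations is exactly where 3412-avoidance enters, and it requires a case analysis on the relative positions of $j, w_j$ with respect to $i, w_i$ (together with a parallel analysis for the inverse shift); it is not a formal consequence of monotonicity. Your proof does not supply this.

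Two smaller points. First, your Stage~1 set identity $\gr{e}{w}^{w_i}_i = \gr{e}{u}$ is believable for a non-corner $(i,w_i)$, but even granted in full, it only tells you that the corner/non-corner status of crosses is preserved; it says nothing about which corners are \emph{spanning}, since spanning is a condition measured against the diagonal, which the deletion distorts. So the set identity cannot by itself deliver part (1), and Stage~2 cannot be bypassed. Second, in Stage~3 the assertion that the non-corner hypothesis ``forces $(i,w_i)$ to lie inside a bounding box containing both $(j,w_j)$ and $(w_j,j)$'' is not correct as stated --- $(w_j,j)$ need not be a cross of $w$ at all, and there is no reason $(i,w_i)$ must lie in $B_{j,w_j}$. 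The correct observation is narrower: a color change forces $|j-w_j|\leq 1$ together with $\min(j,w_j) < \min(i,w_i)$ and $\max(j,w_j) > \max(i,w_i)$, which when $|j-w_j|=1$ pins down $(i,w_i)=(w_j,j)$ and hence $\spn{i,w_i}=\spn{j,w_j}$, contradicting that $(j,w_j)$ is a corner while $(i,w_i)$ is not. The invocation of \cref{lem:321smalltranspositions} here is off target --- that lemma requires 321-avoidance, which is not assumed, and is not needed for this argument.
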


\begin{proof}
Let $\alpha_a$ denote the inverse of $\delta_a$, so that $w$ sends $\alpha_i(j)$ to $\alpha_{u_i}(w_j)$ (since $(i, w_i)$ is a non-corner, $i, w_i \neq n$ and this is well-defined). Note that $\delta_a$ and $\alpha_a$ are order-preserving. Let $\delta:=\delta_i \times \delta_{w_i}$ and $\alpha:=\alpha_i \times \alpha_{w_i}$.

We first show that $\delta$ is a bijection from the non-corner pairs $(j, w_j)$ of $\gr{e}{w}$ with $j \neq i$ to the non-corner pairs $(k, u_k)$ of $\gr{e}{u}$. Recall that $(j, w_j)$ is a non-corner of $\gr{e}{w}$ if and only if $(j, w_j)$ is sandwiched by an inversion of $w$. Moreover, every non-corner pair $(j, w_j)$ is sandwiched by an inversion $\lrangle{k, l}$ where $(k, w_k)$ and $(l, w_l)$ are corners of $\gr{e}{w}$. Indeed, choose the smallest $k$ such that $\lrangle{k, j}$ is an inversion and the largest $l$ such that $\lrangle{j, l}$ is an inversion. Then $(k, w_k)$ and $(l, w_l)$ are both corners.

Let $j \neq i$ and suppose $(j, w_j)$ is sandwiched by an inversion $\lrangle{k, l}$ of $w$. We can choose $k, l$ so that $(k, w_k)$ and $(l, w_l)$ are corners of $\gr{e}{w}$; in particular, neither is equal to $i$.  Because $\delta_a$ is order-preserving, $\lrangle{\delta_i(k), \delta_i(l)}$ is an inversion of $u$ and sandwiches $\delta(j, w_j)$. Similarly, if $(j, u_j)$ is sandwiched by an inversion $\lrangle{k, l}$ of $u$, then $\alpha(j, u_j)$ is sandwiched by the inversion $\lrangle{\alpha_i(k), \alpha_i(l)}$ of $w$. As $\delta$ and $\alpha$ are inverses, we are done.

This also implies that $\delta$ is a bijection from the corner pairs $(j, w_j)$ of $\gr{e}{w}$ to the corner pairs $(k, u_k)$ of $\gr{e}{u}$.

Next, we show that $\delta$ respects containment of spans for corner pairs of $w$. 

Suppose $\spn{k, w_k}$ is contained in $\spn{j, w_j}$ and both pairs are corners of $\gr{e}{w}$. We may assume that $k \neq j$ (otherwise, $\delta$ clearly respects containment of spans) and that $j<w_j$ (otherwise, we can consider instead $w^{-1}$, which avoids 3412 also, and $u^{-1}$). By assumption, $k, w_k \in [j, w_j]$, so $\delta_i(k)$ and $\delta_{w_i}(w_k)$ are in $[j-1, w_j]$. We have 

\[\spn{\delta_i(j), \delta_{w_i}(w_j)}= \begin{cases}
[j, w_j] &   \specialcell{ \text{ if }j<i, w_j<w_i \hspace{1in} \text{(Case I)}}\\
[j, w_j-1] & \specialcell{\text{ if }j<i, w_j>w_i  \hfill \text{(Case II)}}\\
[j-1, w_j] & \specialcell{\text{ if }j>i, w_j<w_i  \hfill \text{(Case III)}}\\
[j-1, w_j-1] & \specialcell{\text{ if }j>i, w_j>w_i.  \hfill \text{(Case IV)}}\\
\end{cases}
\]

Case I: The only way $\spn{\delta(j, w_j)}$ could fail to contain $\spn{\delta(k, w_k)}$ here is if $j-1 \in \{\delta_i(k),\delta_{w_i}(w_k)\}$. If $j-1=\delta_i(k)$, then $k=j$, a contradiction. If $j-1=\delta_{w_i}(w_k)$, then $w_k=j$ and $w_k>w_i$. But $w_k<w_j$ and $w_j<w_i$ by assumption, so we reach a contradiction.

Case II: The only way $\spn{\delta(j, w_j)}$ could fail to contain $\spn{\delta(k, w_k)}$ here is if $j-1$ or $w_j$ is in $ \{\delta_i(k),\delta_{w_i}(w_k)\}$. We will show that $w_j$ is not in $\{\delta_i(k),\delta_{w_i}(w_k)\}$ by contradiction; the other case is similar. 

Suppose $w_j \in \{\delta_i(k),\delta_{w_i}(w_k)\}$. Since $w_k < w_j$, we must have $w_j=\delta_i(k)$, which means $k=w_j$ and $k<i$. Notice that $\lrangle{j, k}$ is an inversion of $w$, as is $\lrangle{j, i}$. Since $(k, w_k)$ is not sandwiched by any inversions of $w$, we must have $w_k< w_i$. To summarize, $j<k<i$ and $w_k<w_i<w_j$. This means $w_j w_k w_i$ form a 312 pattern in $w$.

Note that $w$ cannot have any inversions $\lrangle{a, j}$ or $\lrangle{k, a}$, since this would result in $(j, w_j)$ or $(k, w_k)$, respectively, being sandwiched by an inversion. We further claim that if $a$ forms an inversion with $k$ and $i$, then it must also form an inversion with $j$. Indeed, if $a$ forms an inversion with $k$ and $i$, then $a<k<i$ and $w_a>w_i>w_k$. If $j<a$ and $w_j<w_a$, then $w_j w_a w_k w_i$ form a 3412 pattern; similarly if $j>a$ and $w_j>w_a$. 

Consider $a<j$. From above, we know that $\lrangle{a, j}$ is not an inversion. If $w_a>w_i$, then $\lrangle{a, i}$ and $\lrangle{a, k}$ are both inversions. This combination is impossible, so $w[j-1] \subseteq [w_i-1]$. Also, any $a \in [j+1, k-1]$ with $w_a>k$ forms an inversion with $k$ and $i$ but not $j$, which is impossible. So $w[j+1, k-1] \subseteq[k]$. Since $w_j=k$ and $w_k< w_i<k$, we conclude $w[k] = [k]$. But $i>k$ and $w_i<k$, a contradiction.

Case III: The span of $\delta(k, w_k)$ is contained in $[j-1, w_j]$ by assumption.

Case IV: The only way $\spn{\delta(j, w_j)}$ could fail to contain $\spn{\delta(k, w_k)}$ here is if $w_j \in \{\delta_i(k),\delta_{w_i}(w_k)\}$. The argument that this cannot happen is similar to Case I; we leave it to the reader.

Finally, we will show that $\alpha$ respects span containment for corner pairs of $u$. This completes the proof of \emph{(1)}: suppose $(j, w_j)$ is a spanning corner of $\gr{e}{w}$ and $\spn{\delta(j, w_j)} \subseteq \spn{(a, u_a)}$ for a spanning corner $(a, u_a)$. Note that $\delta(j, w_j)$ is a corner. Since $\alpha$ respects span containment for corners, $\spn{j, w_j} \subseteq \spn{\alpha(a, u_a)}$. By maximality of $\spn{j, w_j}$, we have $\spn{j, w_j}=\spn{\alpha(a, u_a)}$. In particular, $\spn{\alpha(a, u_a)} \subseteq \spn{j, w_j}$, so since $\delta$ preserves span containment for corners, the span of $(a, u_a)$ is contained in the span of $\delta(j, w_j)$. So $\delta(j, w_j)$ is a spanning corner of $\gr{e}{u}$. Reversing the roles of $w$ and $u$ in the above argument shows that $\alpha(j, u_j)$ is a spanning corner of $\gr{e}{w}$ if $(j, u_j)$ is a spanning corner of $\gr{e}{u}$. So $\delta$ is a bijection between the spanning corners of $\gr{e}{w}$ and the spanning corners of $\gr{e}{u}$.

 Suppose $\spn{k, u_k}$ is contained in $\spn{j, u_j}$ and both pairs are corners of $\gr{e}{u}$. Again, we may assume that $k \neq j$ (otherwise, $\delta$ clearly respects containment of spans) and that $j<u_j$ (otherwise, consider instead $w^{-1}$, which avoids 3412, and $u^{-1}$). By assumption, $k, u_k \in [j, u_j]$, so $\alpha_i(k)$ and $\alpha_{w_i}(u_k)$ are in $[j, u_j+1]$. We have 

\[\spn{\alpha_i(j), \alpha_{w_i}(u_j)}= \begin{cases}
[j, u_j] &   \specialcell{ \text{ if }j<i, u_j<w_i \hspace{1in} \text{(Case I')}}\\
[j, u_j+1] & \specialcell{\text{ if }j<i, u_j \geq w_i  \hfill \text{(Case II')}}\\
[j+1, u_j] & \specialcell{\text{ if }j \geq i, u_j<w_i  \hfill \text{(Case III')}}\\
[j+1, u_j+1] & \specialcell{\text{ if }j \geq i, u_j \geq w_i.  \hfill \text{(Case IV')}}\\
\end{cases}
\]

Case I': The only way $\spn{\alpha(k, u_k)}$ could fail to be contained in $\spn{\alpha(j, u_j)}$ is if $u_j+1 \in \{\alpha_i(k), \alpha_{w_i}(u_k)\}$. Suppose that this occurs. Since $u_k<u_j$, we must have $k=u_j$ and $k \geq i$. Also, $u_k<u_j<w_i$. So $\alpha(k, u_k)=(k+1, u_k)$ and $\alpha(j, u_j)=(j, u_j)$. To summarize, we have $j<i<k+1$ and $w_{k+1}<w_j<w_i$. So $w_j w_i w_{k+1}$ form a 231 pattern. 

Because $(k+1, w_k)$ and $(j, w_j)$ are corners and are not sandwiched by any inversions, $w$ has no inversions of the form $\lrangle{a, j}$ or $\lrangle{k+1, a}$. Also, any $a$ forming an inversion with $i$ and $k+1$ but not $j$ would give rise to a 3412 pattern, so no such $a$ exist. 

Consider $a >k+1$. If $w_a < w_j$, then $a$ would either form an inversion with $k+1$, which is impossible, or $a$ would form an inversion with both $i$ and $j$ but not $k+1$, which is also impossible. So $w[k+2, n] \subseteq [w_j, n]$. Since $j<k+2$ and $w_j=k$, in fact $w[k+2, n] \subseteq [k+1, n]$. Notice that $w_i \geq k+1$ and $i < k+1$, so we can refine this further to $w[k+2, n] = [k+1, n] \setminus \{w_i\}$. But $(i, w_i)$ is sandwiched be some inversion $\lrangle{a, b}$, so $a<i<k+1$ and $w_a >w_i \geq k+1$. This is clearly a contradiction.

Case II': By assumption, $\spn{\alpha(k, u_k)} \subseteq [j, u_j+1]$, so the claim is true.

Case III': The only way $\spn{\alpha(k, u_k)}$ could fail to be contained in $\spn{\alpha(j, u_j)}$ is if $j$ or $u_j+1$ were in $\{\alpha_i(k), \alpha_{w_i}(u_k)\}$. Suppose that $j \in \{\alpha_i(k), \alpha_{w_i}(u_k)\}$. Since $k>j$, this means $u_k=j=\alpha_{w_i}(u_k)$ and $w_i>u_k$. So $\alpha(k, u_k)=(k+1, u_k)$ and we have $i<j+1<k+1$ and $w_{k+1}<w_{j+1}<w_i$. This means that $(j+1, w_{j+1})$ is sandwiched by the inversion $\lrangle{i, k+1}$, a contradiction. The other case is similar.

Case IV': The only way $\spn{\alpha(k, u_k)}$ could fail to be contained in $\spn{\alpha(j, u_j)}$ is if $j \in \{\alpha_i(k), \alpha_{w_i}(u_k)\}$. This is similar to Case I', so we leave it to the reader.

For \emph{(2)}: The map 

\begin{align*}
    \beta: \{B_{j, w_j}: (j, w_j) \in \overline{S}_w\} &\to \{B_{k, u_k}: k=1, \dots, n\}\\
    B_{j, w_j} & \mapsto B_{\delta_i(j),\delta_{w_i}(w_j)}
\end{align*}

 is well-defined and injective because $\alpha$ and $\delta$ preserve span containment (for corners) and thus also preserve equality of spans (for corners). So $B_{\delta_i(j),\delta_{w_i}(w_j)}=B_{\delta_i(k),\delta_{w_i}(w_k)}$ if and only if $B_{j, w_j}=B_{k, w_k}$. Its image is the bounding boxes of $\gr{e}{u}$, since $\delta$ is a bijection between spanning corners of $\gr{e}{w}$ and spanning corners of $\gr{e}{u}$.
 
 We will show $\beta$ preserves the colors of the boxes by contradiction. Suppose the color of $\beta(B_{j, w_j})$ differs from the color of $B_{j, w_j}$. This situation means that the relative order of $j, w_j$ must be different from that of $\delta_i(j), \delta_{w_i}(w_j)$. This can only happen if $\min(j, w_j)$ is not shifted down by $\delta_a$ (for the appropriate $a \in \{i, w_i\}$), $\max(j, w_j)$ is shifted down by $\delta_b$ (for $b \in \{i, w_i\} \setminus \{a\}$) and $|j-w_j|\leq 1$. That is, $\min(j, w_j)< a$ and $\max(j, w_j)>b$. If $j=w_j$, this implies $(j, w_j)$ is spanned by $(i, w_i)$, a contradiction. Otherwise, this implies $(i, w_i)$ is spanned by $(j, w_j)$. Because $|j-w_j|=1$, the only possibility for this is that $i=w_j$ and $w_i=j$, so the spans are equal. But $(j, w_j)$ is a corner and $(i, w_i)$ is not, a contradiction.

This means $\beta$ sends green bounding boxes to green bounding boxes, blue to blue, and red to red. It also sends purple to purple: suppose $(j, w_j)$ is a spanning corner and $B_{j, w_j}$ is purple. Then $(w_j, j)$ is also a spanning corner of $\gr{e}{w}$ and is not equal to $(j, w_j)$. Since the span of $(j, w_j)$ and $(w_j, j)$ are the same, the span of $\delta(j, w_j)$ and $\delta(w_j, j)$ are the same; since $\delta$ is a bijection on spanning corners, $\delta(j, w_j) \neq \delta(w_j, j)$. So $B_{\delta(j, w_j)}=B_{\delta(w_j, j)}$ and this bounding box is both red and blue.

For \emph{(3)}: Suppose $B_{j, w_j}$ and $B_{k, w_k}$ are two bounding boxes of $\gr{e}{w}$ and $B_{j, w_j}$ precedes $B_{k, w_k}$ in the order given. That is, $\min(j, w_j)<\min(k, w_k)$. Suppose for the sake of contradiction that $\min(\delta_i(j), \delta_{w_i}(j)) \geq \min(\delta_i(k), \delta_{w_i}(k))$. In fact, because $\delta_a$ shifts numbers by at most 1, the only possibility is that $\min(\delta_i(j), \delta_{w_i}(j)) = \min(\delta_i(k), \delta_{w_i}(k))$. Since $\delta(j, w_j)$ and $\delta(k, w_k)$ are both spanning corners of $\gr{e}{u}$ and thus have maximal span, this implies that the span of $\delta(j, w_j)$ is equal to the span of $\delta(k, w_k)$. So $B_{\delta(j, w_j)}=B_{\delta(k, w_k)}$. Since $\beta$ is a bijection, this implies $B_{j, w_j}=B_{k, w_k}$, a contradiction. 


\end{proof}

\cref{prop:alternatingboxesDiag} follows as a corollary.


\begin{proof}[Proof of \cref{prop:alternatingboxesDiag}]
Note that no bounding boxes of $\gr{e}{w}$ are green, since this would imply $w$ is contained in some maximal parabolic subgroup.

Repeatedly apply the operation of \cref{prop:deleteinternaldots} to $w$ until you arrive at a permutation $u$ with no non-corner pairs.

The permutation $u$ will avoid 3412. Indeed, one-line notation for $u$ can be obtained from $w$ by repeatedly deleting some number $a$ and applying $\delta_a$ to the remaining numbers. Since $\delta_a$ preserves order, any occurrence of 3412 in $u$ would imply an occurrence of 3412 in $w$. It will also avoid 321, since if $u_i u_j u_k$ form a 321 pattern, $(j, w_j)$ is sandwiched by the inversion $\lrangle{i, k}$ and thus is a non-corner pair.

By \cref{prop:alternatingboxes321}, no bounding boxes of $\gr{e}{u}$ are purple and they alternate between red and blue (when ordered by the row of the northwest corner). \cref{prop:deleteinternaldots} implies that the bounding boxes of $\gr{e}{w}$ are in bijection with the bounding boxes of $\gr{e}{u}$ and that this bijection preserves the coloring and ordering of the bounding boxes. So no bounding boxes of $\gr{e}{w}$ are purple, and they alternate between red and blue.
\end{proof}

We now are ready to prove \cref{prop:alternatingBoxesAnti}.

\begin{proof}[Proof of \cref{prop:alternatingBoxesAnti}]
Since $v$ avoids 2143, $w_0 v$ avoids 3412. By assumption, $w_0v$ is not contained in a maximal parabolic subgroup, so by \cref{prop:alternatingboxesDiag}, the bounding boxes of $\gr{e}{w_0v}$ alternate in color between red and blue (and none are purple) when ordered by the row of their northeast corner. By \cref{rmk:bounding-boxes}, reversing the columns of these bounding boxes gives the bounding boxes of $\gr{v}{w_0}$, now ordered according to the row of their northwest color. Since the bounding boxes of $\gr{v}{w_0}$ have the same color as the corresponding bounding boxes of $\gr{e}{w_0v}$, the proposition follows.
\end{proof}

\section{Proof of Proposition~\ref{prop:deleteDotDetAnti}}\label{sec:proof2}

We apply a similar technique as in the above section.  Rather than proving \cref{prop:deleteDotDetAnti} directly, we instead prove the following:

\begin{prop} \label{prop:deleteDotDet}
Let $w\in S_n$ be 4231- and 3412-avoiding, and choose $i\in [n]$. Let $u \in S_{n-1}$ be the permutation obtained from $w$ by deleting $w_i$ from $w$ in one-line notation and shifting appropriately (that is, $u: \delta_i(j) \mapsto \delta_{w_i}(w_j)$). If $(i, v_i)$ is not a spanning corner of $\gr{e}{w}$, then $\gr{e}{u}=\gr{e}{w}_i^{w_i}$. Further, for all $i$, \begin{equation} \label{eqn:deleteDotDet}
\det(\res{M}{\gr{e}{u}})=\det(\res{M}{\gr{e}{w}_{i}^{w_i}}).\end{equation}
\end{prop}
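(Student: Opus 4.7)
The plan is to expand both determinants and match them term by term. Each side equals a signed sum
$$\sum_{\sigma \in S_{n-1}} (-1)^{\ell(\sigma)} \prod_{r=1}^{n-1} m_{r,\sigma(r)}$$
restricted to permutations $\sigma$ whose graphs lie in $\gr{e}{u}$ (on the left) and in $\gr{e}{w}_{i}^{w_i}$ (on the right). So it suffices to prove that these two indexing sets of permutations coincide.

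The direction $\gro{\sigma} \subseteq \gr{e}{u} \Rightarrow \gro{\sigma} \subseteq \gr{e}{w}_{i}^{w_i}$ will follow from the pointwise inclusion $\gr{e}{u} \subseteq \gr{e}{w}_{i}^{w_i}$. Using \cref{lem:lowerIntervalGraphCharacterization}, each point of $\gr{e}{u}$ is either a dot of $u$, which lifts via $\alpha_i \times \alpha_{w_i}$ to a dot of $w$, or is sandwiched by an inversion $\lrangle{k,l}$ of $u$, in which case $\lrangle{\alpha_i(k), \alpha_i(l)}$ is an inversion of $w$ sandwiching the lifted point. The lifted point never lies in row $i$ or column $w_i$, so it descends into $\gr{e}{w}_{i}^{w_i}$.

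For the reverse direction, given $\sigma \in S_{n-1}$ with $\gro{\sigma} \subseteq \gr{e}{w}_{i}^{w_i}$, define $\tilde\sigma \in S_n$ by $\tilde\sigma(i) = w_i$ and $\tilde\sigma(\alpha_i(r)) = \alpha_{w_i}(\sigma(r))$; then $\gro{\tilde\sigma} \subseteq \gr{e}{w}$. I would next establish the lower-interval analog of \cref{prop:fullIntervals}: for $w$ avoiding $4231$ and $3412$, $\{\tilde\sigma : \gro{\tilde\sigma} \subseteq \gr{e}{w}\} = [e, w]$. This reduces to \cref{prop:fullIntervals} applied to $w_0 w$ via \cref{lem:reverseCols}: the value-reversal $v_j = n+1-w_j$ sends the patterns $1324$, $24153$, $31524$, $421653$ to $4231$, $42513$, $35142$, $356124$, and each of the latter three contains $3412$ as a subpattern (for instance $42513$ has a $3412$ at positions $1,3,4,5$), so the hypothesis rules them all out. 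Hence $\tilde\sigma \leq w$.

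The main obstacle is to deduce $\sigma \leq u$ from $\tilde\sigma \leq w$ together with the common fixed-position $\tilde\sigma(i) = w(i) = w_i$. I would verify the tableau criterion directly. For $k' < i$, the initial-segment value sets $\tilde\sigma([k'])$ and $w([k'])$ do not contain $w_i$, so $\delta_{w_i}$ is an order-preserving bijection on them and $\sigma[k'] \leq u[k']$ follows immediately from $\tilde\sigma[k'] \leq w[k']$. For $k' \geq i$, set $A = \tilde\sigma[k'{+}1]$ and $B = w[k'{+}1]$, and let $s$, $t$ be the positions of $w_i$ in the sorted orderings of $A$, $B$. Since $A \leq B$ componentwise, counting elements not exceeding $w_i$ gives $s \geq t$. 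A short case analysis on the ranges $j < t$, $t \leq j < s$, and $j \geq s$ confirms $A \setminus \{w_i\} \leq B \setminus \{w_i\}$ componentwise, and applying $\delta_{w_i}$ yields $\sigma[k'] \leq u[k']$. Thus $\sigma \leq u$, whence $\gro{\sigma} \subseteq \gr{e}{u}$ by definition of the graph of an interval, completing the proof.
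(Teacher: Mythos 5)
Your argument is correct, and it is a genuinely different route from the paper's. You match the two determinants term-by-term and reduce the proposition to the statement that $\{\sigma \in S_{n-1} : \gro{\sigma} \subseteq \gr{e}{u}\} = \{\sigma \in S_{n-1} : \gro{\sigma} \subseteq \gr{e}{w}_i^{w_i}\}$. One inclusion is the easy pointwise containment $\gr{e}{u} \subseteq \gr{e}{w}_i^{w_i}$; for the other, you lift $\sigma$ to $\tilde\sigma \in S_n$, invoke the lower-interval form of the Sj\"ostrand characterization (the column-reversed version of \cref{prop:fullIntervals}, valid since $3412$-avoidance kills $42513$, $35142$, and $356124$) to conclude $\tilde\sigma \leq w$, and then run a careful tableau-criterion argument to strip off the fixed position $\tilde\sigma(i)=w_i$ and deduce $\sigma \leq u$. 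I verified the key lemma that deleting the common value $w_i$ from the sorted sets $A=\tilde\sigma[k'+1]$ and $B=w[k'+1]$ (at positions $s \geq t$) preserves componentwise domination; the three-range case split is correct. The paper instead works geometrically with $\gr{e}{w}_i^{w_i}$ itself: it splits into cases according to whether $(i,w_i)$ is an internal dot, a non-spanning corner, or a spanning corner. In the first two cases $\gr{e}{w}_i^{w_i}=\gr{e}{u}$ exactly, using $4231$- and $3412$-avoidance to rule out stray sandwiched positions; in the spanning-corner case it shows $\gr{e}{w}_i^{w_i} = \gr{e}{u} \sqcup I$ with the extra positions $I$ lying strictly off the block diagonal of a block-upper-triangular region (via \cref{lem:boxIntersectionsMoreGraphs}, \cref{lem:invNotInBoxOK}, and the leading-dot lemma), so they contribute nothing to the determinant. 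Your approach sidesteps all of the bounding-box machinery and the block-triangular structure, at the cost of invoking Sj\"ostrand's theorem and a more delicate Bruhat-order manipulation — which arguably makes for a shorter, more self-contained proof given that \cref{prop:fullIntervals} is already established in Section 2.
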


We prove this using a sequence of lemmas.


\begin{lem} \label{lem:boxIntersectionsMoreGraphs}
Let $w \in S_n$ be 3412-avoiding. Let $\wire{i}{w_i}$ be a spanning corner, and let $q=\min(i, w_i)$. Let $N:=\{j \in [n]: j, i \text{ form an inversion of }w\} \cup \{i\}$. Say $ N=\{k_1, \dots ,k_m\}$ and let $\rho: w(N) \to [m]$ be the unique order-preserving bijection between the two sets. Let $u$ be the permutation of $[m]$ whose one line notation is $\rho(w_{k_1}) \rho(w_{k_2}) \cdots \rho(w_{k_m})$.  Then $B_{i, w_i} \cap \gr{e}{w}=\{(r, c): (r, c)- (q-1,q-1) \in \gr{e}{u}\}$.
\end{lem}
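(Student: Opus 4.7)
My plan is to prove the set equality by applying Lemma~\ref{lem:lowerIntervalGraphCharacterization} to both sides and matching dots and sandwiched points under the shift by $(q-1,q-1)$. I would first treat the blue case $i\leq w_i$ (so $q=i$) in detail; the red case follows by the analogous argument applied to $w^{-1}$, which is also 3412-avoiding and has $(w_i,i)$ as a blue spanning corner. A preliminary counting step is needed: the spanning corner condition forces $w_j<w_i$ for every $j<i$, since otherwise $\spn{j,w_j}\supsetneq\spn{i,w_i}$; this yields $|N|=w_i-i+1$, matching the side length of $B_{i,w_i}$.

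The technical heart of the proof would be a structural lemma asserting that for 3412-avoiding $w$, the set $N$ is compatible with $[q,q+m-1]$ in the following sense: writing $N=\{k_1<k_2<\cdots<k_m\}$, one has $k_a=a+q-1$ whenever $k_a\in[q,q+m-1]$ (equivalently, $N\cap[q,q+m-1]$ is an initial segment $[q,q+r-1]$). I would prove this by contradiction: if $N\cap[q,q+m-1]$ is not of this form, then some $j\in[i+1,w_i]$ has $w_j>w_i$ while some $j''>w_i$ has $w_{j''}<w_i$, and a careful selection of two positions $c<d$ with $c,d>j$ and $w_c<w_d<w_i$ (using the count $|N|=w_i-i+1$ to guarantee enough low values at positions past $j$) assembles a 3412 pattern $(i,j,c,d)$ in $w$, a contradiction.

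Granted the structural lemma, the two inclusions follow by matching cases. For $B_{i,w_i}\cap\gr{e}{w}\subseteq\gr{e}{u}+(q-1,q-1)$: if $(r,c)=(j,w_j)$ is a dot of $w$, then $j\in N\cap[q,q+m-1]$, so the structural lemma makes $j-q+1$ equal the rank of $j$ in $N$ and the shifted dot becomes the corresponding dot of $u$; a sandwiched $(r,c)$ can be rewitnessed by an inversion with both endpoints in $N$ (e.g., when the witnessing inversion $\lrangle{k,l}$ has $k<i$, spanning gives $w_k<w_i$ and $w_l\leq c\leq w_i$ yields that $\lrangle{i,l}$ is an inversion with $i,l\in N$ still sandwiching $(r,c)$), and the correspondence $\lrangle{k_a,k_b}\leftrightarrow\lrangle{a,b}$ between $w$-inversions on $N$ and inversions of $u$ yields the result. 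The reverse inclusion is handled symmetrically. I expect the structural lemma---in particular, pinning down the four positions assembling the 3412 pattern in all bad configurations---to be the main obstacle.
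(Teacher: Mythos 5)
Your proposal hinges on a structural lemma that is \emph{false}: for a 3412-avoiding $w$ with $(i,w_i)$ a blue spanning corner, $N\cap[q,q+m-1]$ need not be an initial segment $[q,q+r-1]$. Take $w=4\,1\,5\,3\,6\,2\,7\in S_7$ and $i=1$, so $w_i=4$, $q=1$, $m=4$. One checks that $w$ avoids $3412$ and that $(1,4)$ is a spanning corner (the only other candidate span $[2,6]$ from $(6,2)$ does not contain $[1,4]$). Here $N=\{1,2,4,6\}$, so $N\cap[1,4]=\{1,2,4\}$, which is not an initial segment. (A larger example with $i>1$: $w=1\,2\,8\,3\,9\,7\,10\,6\,5\,4\in S_{10}$, $i=3$, where $N=\{3,4,6,8,9,10\}$ and $N\cap[3,8]=\{3,4,6,8\}$.) In both cases the lemma's conclusion $B_{i,w_i}\cap\gr{e}{w}=\gr{e}{u}+(q-1,q-1)$ does hold, so the result is true but your intended route to it is not.

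The step that cannot be repaired is the ``careful selection of two positions $c<d$ with $c,d>j$ and $w_c<w_d<w_i$.'' The very 3412-avoidance you would contradict already \emph{forbids} such a pair: for any $c<d$ with $c,d>j$ and $w_c,w_d<w_i<w_j$, if $w_c<w_d$ then $(i,j,c,d)$ is a 3412 pattern, so $w$ must be decreasing along those positions. Thus the count $|N|=w_i-i+1$ supplies plenty of low values past $j$, but never an ascending pair among them, and no contradiction materializes. Separately, even granting a correct structural lemma for the rows of $N$, your sketch of the inclusion only matches the row coordinate; the column coordinate requires $\rho(w_j)=w_j-q+1$, i.e.\ a parallel ``final-segment'' statement about $w(N)\cap[q,q+m-1]$, which you do not address. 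The paper avoids this entire issue: instead of a global description of $N$, it deletes all rows and columns carrying a cross north or east of $B_{i,w_i}$, then uses the alternating red/blue bounding-box structure of Proposition~\ref{prop:alternatingboxesDiag} to show that no row between $i$ and $j$ (respectively no column between $w_j$ and $w_i$) is deleted for each relevant witnessing inversion $\lrangle{i,j}$, so that sandwiching is preserved \emph{locally}. That local argument is the substitute for the false global claim.
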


\begin{ex} \label{ex:boxIntersection}
Let $w=3472165$ (see \cref{fig:deleteInternalDot} for a picture of $\gr{e}{w}$). Choose the spanning corner $(3, 7)$. Then $N=\{3, 4, 5, 6, 7 \}$ and $u=52143$. The graph $\gr{e}{u}$ is pictured below.

\begin{center}
\includegraphics[height=0.15\textheight]{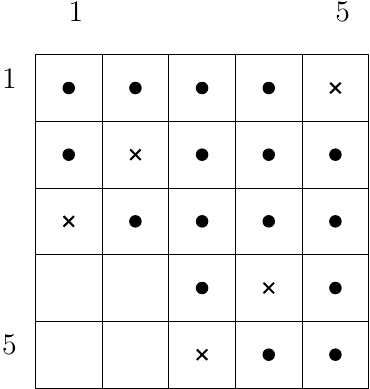}
\end{center}

The part of $\gr{e}{w}$ that lies in $B_{3, 7}$ is identical to $\gr{e}{u}$ (up to translation along the diagonal).
\end{ex}

\begin{proof} [Proof of \cref{lem:boxIntersectionsMoreGraphs}]
We may assume $i<w_i$, so $B_{i, w_i}$ is blue; otherwise, we can consider $w^{-1}$, which will still avoid $3412$, $u^{-1}$, which can be obtained from $w^{-1}$ by the same procedure as $u$ is obtained from $w$, and the bounding box $B_{w_i, i}$, which is blue. The intersection $\gr{e}{w^{-1}} \cap B_{w_i, i}$ is simply the transpose of $B_{i, w_i} \cap \gr{e}{w}$.

We may also assume that $w$ is not contained in a maximal parabolic subgroup of $S_n$. (If it were, we could consider just the block of $\gr{e}{w}$ containing $(i, w_i)$ and argue just about that block.) We may further assume that $\gr{e}{w}$ has more than one bounding box. By \cref{prop:alternatingboxesDiag}, the bounding boxes of $\gr{e}{w}$ alternate between blue and red, and none are green or purple.

Notice that $ N$ is contained in $[i, n]$, since if $\lrangle{j, i}$ were an inversion of $w$, $(j, w_j)$ would span $(i, w_i)$. So $k_1=i$, and $u_1=\rho(w_i)$. Because $\rho$ is order preserving, $\lrangle{1, k}$ is an inversion of $u$ for all $k \in [m]$, which implies $\rho(w_i)=m$.  

We have $j \in  N$ precisely when $(j, w_j)$ lies southwest of $(i, w_i)$ in the plane, since there are no $(j, w_j)$ to the northeast. To obtain $\gro{u}$ from $\gro{w}$, delete all rows and columns of the $n \times n$ grid which have a cross to the north or east of $B_{i, w_i}$ (that is, a cross $(j, w_j)$ with $j<i$ or $w_j>w_i$) and renumber remaining rows and columns with $[m]$. Note that $|i-w_i|=|1 - m|$ because for every row above $i$ that is deleted, a column to the left of $w_i$ is deleted. So $B_{i, w_i}$ is an $m \times m$ square, which we can identify with the $m \times m$ square containing $\gr{e}{u}$ by relabeling rows and columns. Also, these deletions take the corners (resp. non-corners) of $\gr{e}{w}$ with $j \in  N$ to corners (resp. non-corners) of $\gr{e}{u}$.

Thus, it suffices to check the following: if $(r, c) \in B_{i, w_i}$ is sandwiched by an inversion $\lrangle{i, j}$, where $(j, w_j)$ is a corner of $\gr{e}{w}$, then the corresponding square of $\gr{e}{u}$ is sandwiched by an inversion of $u$.

First, let $B_{a, w_a}$ and $B_{b, w_b}$ be the red bounding boxes immediately preceding and following $B_{i, w_i}$, respectively, in the usual order on bounding boxes. If $\lrangle{i, j}$ is an inversion of $w$, then $(j, w_j) \in B_{a, w_a} \cup B_{i, w_i} \cup B_{b, w_b}$. Indeed, suppose $(j, w_j)$ is a corner such that $\lrangle{i, j}$ is an inversion, and $(j, w_j) \notin B_{a, w_a} \cup B_{i, w_i} \cup B_{b, w_b}$. Then either $w_j<w_a$ or $j>b$; otherwise $(j, w_j)$ would not be in the union of bounding boxes for $w$, a contradiction of \cref{lem:boundingboxesDiag}. If $j>b$, then there is a blue bounding box $B_{d, w_d}$ immediately following $B_{b, w_b}$ in the usual order of bounding boxes. One can check that $i<d<b<j$ and $w_i w_d w_b w_j$ forms a 3412 pattern. If $w_j<w_a$, there is a blue bounding box $B_{d, w_d}$ immediately preceding $B_{a, w_a}$, and one can check that $d<i<j<a$ and $w_d w_i w_j w_a$ forms a 3412 pattern. If $(j, w_j)$ is not a corner but $\lrangle{i, j}$ is an inversion, then $(j, w_j)$ is sandwiched by an inversion $\lrangle{i, k}$ where $k$ is a corner, so $(j, w_j)$ is also in the union of the three bounding boxes.

This implies that if $\lrangle{i, j}$ is an inversion of $w$ such that $(j, w_j)$ is a corner, then either $(j, w_j) \in B_{i, w_i}$ or $j \in \{a, b\}$. So either $w_i \geq j$ or $i \leq w_j$.

Suppose $w_i \geq j$ (so $(j, w_j)$ is either in $B_{i, w_i}$ or $j=a$). We claim no rows between $i$ and $j$ are deleted. Indeed, a row between $i$ and $j$ is deleted only if there is a dot $(k, w_k)$ to the east of $B_{i, w_i}$ with $i<k<j$. Necessarily, $w_k>w_i$. If there is a dot to the east of $B_{i, w_i}$, then $B_{i, w_i}$ is not the last bounding box. By \cref{prop:alternatingboxesDiag}, the following bounding box $B_{s, w_s}$ is red. One can check that $w_i w_k  w_j  w_s$ is a 3412 pattern, a contradiction. 
By a similar argument, if $(j, w_j)$ is a corner such that $\lrangle{i, j}$ is an inversion of $w$, and $i \leq w_j$ (so $(j, w_j)$ is either in $B_{i, w_i}$ or $j=b$), then no column between $w_j$ and $w_i$ is deleted.

Now, if a corner $(j, w_j)$ is in $B_{i, w_i}$, then $i \leq w_j$ and $w_i \geq j$, so the relative position of $(i, w_i)$ and $(j, w_j)$ is the same as the relative position of the images of $(i, w_i)$ and $(j, w_j)$ after deletion. So if $(a, b) \in B_{i, w_i}$ is sandwiched by $\lrangle{i, j}$, the corresponding square in $\gr{e}{u}$ is sandwiched by the image of $\lrangle{i, j}$.

If $j=a$ (resp. $j=b$), then $\rho(w_j)=1$ (resp. $j=k_m$). That is, the image of $(j, w_j)$ after deletion is $(j, 1)$ (resp. $(m, w_j)$). This is because $(j, w_j)$ is the west-most (resp. south-most) cross forming an inversion with $(i, w_i)$. So if $(a, b) \in B_{i, w_i}$ is sandwiched by $\lrangle{i, j}$, the corresponding square in $\gr{e}{u}$ is sandwiched by the image of $\lrangle{i, j}$.

\end{proof}

From \cref{lem:boxIntersectionsMoreGraphs}, we can derive the following.

\begin{lem} \label{lem:invNotInBoxOK}
Let $w \in S_n$ be 3412-avoiding, and let $\wire{i}{w_i}$ be a spanning corner. Suppose $(r, c) \notin B_{i, w_i}$ is sandwiched by an inversion involving $i$. Then $(r, c)$ is also sandwiched by some inversion $\lrangle{a, b}$ where neither $(a, w_a)$ nor $(b, w_b)$ are in $B_{i, w_i}$.
\end{lem}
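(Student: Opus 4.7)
My approach is to reduce by symmetry to a canonical configuration, then use the bounding-box structure of Proposition~\ref{prop:alternatingboxesDiag} and Lemma~\ref{lem:boxIntersectionsMoreGraphs} to produce the desired inversion inside an adjacent bounding box.

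First, since $w \mapsto w^{-1}$ preserves 3412-avoidance (the pattern 3412 is its own inverse) and swaps the color of bounding boxes, I may assume $B_{i,w_i}$ is blue, so $i < w_i$ and $B_{i,w_i} = [i,w_i]^2$. The antidiagonal transpose $w \mapsto w_0 w^{-1} w_0$ also preserves 3412-avoidance, fixes bounding-box colors, and swaps the conditions ``$r > w_i$'' and ``$c < i$''; so I may further assume $r > w_i$. Since $(i, w_i)$ is a spanning blue corner, the only inversions of $w$ involving $i$ are of the form $\langle i, j\rangle$ with $j > i$ and $w_j < w_i$ --- an inversion $\langle k, i\rangle$ would place $(k, w_i)$ strictly east of $(i, w_i)$, contradicting spanning-cornerhood. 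Fixing such $\langle i, j\rangle$ sandwiching $(r,c)$ with $j$ chosen maximal, $(j, w_j)$ is a corner and $j \ge r > w_i$, so $(j, w_j) \notin B_{i,w_i}$.

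Next, I would show $(r, c)$ lies inside the red bounding box following $B_{i,w_i}$. A 3412-avoidance argument analogous to the one in the proof of Lemma~\ref{lem:boxIntersectionsMoreGraphs} forces every corner $(j', w_{j'})$ forming an inversion with $i$ to lie either in $B_{i,w_i}$ or in one of the red bounding boxes adjacent to $B_{i,w_i}$ in the order of Proposition~\ref{prop:alternatingboxesDiag}; since $j > w_i$, it must be the red box $B_{b^*, w_{b^*}}$ immediately following $B_{i,w_i}$. Interval arithmetic using $i < w_{b^*} \le w_i \le b^*$, $w_{b^*} \le w_j \le c \le w_i$, and $w_i < r \le j \le b^*$ places $(r,c)$ inside $B_{b^*, w_{b^*}} = [w_{b^*}, b^*]^2$, and $r > w_i$ places it outside the intersection sub-square $[w_{b^*}, w_i]^2 = B_{i,w_i} \cap B_{b^*, w_{b^*}}$.

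Finally, applying Lemma~\ref{lem:boxIntersectionsMoreGraphs} to $(b^*, w_{b^*})$, the intersection $\gr{e}{w} \cap B_{b^*, w_{b^*}}$ identifies (after translation along the diagonal) with $\gr{e}{u^*}$ for a 3412-avoiding sub-permutation $u^*$, and the sub-square $[w_{b^*}, w_i]^2$ corresponds to a bounding box of $\gr{e}{u^*}$. The image of $(r,c)$ lies in $\gr{e}{u^*}$ but outside this sub-box, so by induction on $n$ applied to the corresponding spanning corner of $u^*$, it is sandwiched by an inversion of $u^*$ whose endpoints avoid the sub-box; pulling back gives the required $\langle a, b\rangle$ with $(a, w_a), (b, w_b) \in B_{b^*, w_{b^*}} \setminus B_{i,w_i}$. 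The main obstacle I foresee is verifying that the sub-box is itself a spanning corner of $u^*$ so that the induction applies directly; if it is not, an ad hoc case analysis using the 3412-avoidance of $u^*$ (essentially rerunning the ``leftmost-inversion'' construction inside $u^*$) would replace the inductive step.
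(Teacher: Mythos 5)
Your setup is sound up to and including the interval arithmetic: the reduction to a blue box $B_{i,w_i}$ and $r > w_i$ via $w \mapsto w^{-1}$ and the antidiagonal transpose is valid, and the deduction that the relevant $(r,c)$ lies in the following red box $B_{b^*, w_{b^*}}$ and outside $B_{i,w_i} \cap B_{b^*,w_{b^*}} = [w_{b^*},w_i]^2$ is correct and matches the paper's observation.

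The gap is the inductive step, and the obstacle you flag is not a technicality but fatal to the plan. When Lemma~\ref{lem:boxIntersectionsMoreGraphs} is applied to the red spanning corner $(b^*, w_{b^*})$, the resulting $u^* \in S_m$ satisfies $u^*(m)=1$, so $(m,1)$ has span $[1,m]$; since the bounding boxes of $\gr{e}{w}$ alternate colors with no purple boxes, $(1,m)$ is not also a spanning corner, so $(m,1)$ is the \emph{unique} spanning corner of $u^*$ and $\gr{e}{u^*}$ has a single bounding box, the entire square $[1,m]^2$. (This is visible already in the paper's Example~\ref{ex:boxIntersection}, where $u=52143$ has only the spanning corner $(1,5)$.) Hence the sub-square $[w_{b^*},w_i]^2$ is never a bounding box of $\gr{e}{u^*}$, the image of $(i,w_i)$ is never a spanning corner of $u^*$, and there is no smaller instance of the lemma to invoke; the inductive framework produces nothing. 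Your fallback ``rerun the leftmost-inversion construction inside $u^*$'' is left unspecified and is where all the content of the proof lives.

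The paper's actual argument fills exactly this hole with a short direct construction rather than induction. Since $u^*(m)=1$, one has $u^* \geq 23\cdots m1$, so the upper off-diagonal of $B_{b^*,w_{b^*}}$ lies in $\gr{e}{w}$ (working with the preceding box $B_{a,w_a}$, the paper uses $(i-1,i)$; for your box $B_{b^*,w_{b^*}}$ the symmetric point is $(w_i, w_i+1)$). This forces the existence of a cross $(k,w_k)$ lying outside $B_{i,w_i}$ that forms an inversion with $b^*$, and $\langle k, b^*\rangle$ then sandwiches every $(r,c)$ in question, with both endpoints outside $B_{i,w_i}$. You should replace the inductive step with this explicit construction (or carry out your ``ad hoc case analysis'' in this form).
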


\begin{proof}
We will assume that $B_{i, w_i}$ is blue; otherwise, we consider $w^{-1}$ instead. We also assume that $w \in S_n$ is not contained in any parabolic subgroup; if it were, $\gr{e}{w}$ is block-diagonal and we can argue for each block individually. The lemma is vacuously true if $\gr{e}{w}$ has a single box, so we may assume it does not. By \cref{prop:alternatingboxesDiag}, the bounding boxes of $\gr{e}{w}$ alternate between red and blue, and none are purple.

Suppose $B_{a, w_a}$ and $B_{b, w_b}$ are the (red) bounding boxes immediately preceding and following $B_{i, w_i}$, respectively. As in \cref{lem:boxIntersectionsMoreGraphs}, if $j$ forms an inversion with $i$, then $(j, w_j) \in B_{a, w_a} \cup B_{i, w_i} \cup B_{b, w_b}$.

This implies that the positions $(r, c)$ satisfying the conditions of the lemma are contained $B_{a, w_a} \cup B_{b, w_b}$. The positions $(r, c) \subseteq B_{a, w_a}$ satisfying the conditions of the lemma are exactly those with $w_a\leq c<i$ and $i \leq r \leq a$. By \cref{lem:boxIntersectionsMoreGraphs}, $\gr{e}{w} \cap B_{a, w_a}$ is the graph of another interval $[e, u]$ where $u \in S_{m}$; since $B_{a, w_a}$ is red, $u$ sends $m$ to 1. This means that $u$ is greater than the permutation $2 3 \cdots m 1$. In particular, this means that positions $(q, q+1)$ for $q=1, \dots, m-1$ are in $\gr{e}{u}$, so the upper off-diagonal of $B_{a, w_a}$ is in $\gr{e}{w}$. Thus, $(i-1, i)$ is sandwiched by some inversion of $w$, implying there is a dot $(j, w_j)$ northeast of $(i-1, i)$. This dot is necessarily not in $B_{i, w_i}$, and the inversion $(j, a)$ sandwiches all of the positions $(r, c) \subseteq B_{a, w_a}$ satisfying the conditions of the lemma.

The argument for the positions $(r, c) \subseteq B_{b, w_b}$ satisfying the conditions of the lemma is essentially the same.
\end{proof}

We can now prove \cref{prop:deleteDotDet}.

\begin{proof}[Proof of \cref{prop:deleteDotDet}] Let $D=\gr{e}{w}_{i}^{w_i}$.

 Consider $\gr{e}{w}$ drawn in an $n\times n$ grid with the positions $(j, w_j)$ marked with crosses and all others marked with dots. Recall that $D$ is the collection of crosses and dots obtained from $\gr{e}{w}$ by deleting row $i$ and column $w_i$ and renumbering rows by $\delta_i$ and columns by $\delta_{w_i}$. Note that the crosses of $D$ are in positions $(j, u_j)$.

 If $(i, w_i)$ was an internal dot of $\gr{e}{w}$, then all dots in $D$ are sandwiched by an inversion of $u$, so $D=\gr{e}{u}$. In this case, $\res{M}{D}=\res{M}{\gr{e}{u}}$, so the determinants are equal.
 
 If $(i, w_i)$ is a corner but not a spanning corner of $\gr{e}{w}$, we claim we again have $D= \gr{e}{u}$. Suppose $(i, w_i)$ is contained in a blue bounding box $B_{k, w_k}$ (if it is only contained in a red bounding box, we can consider $w^{-1}$ and the transpose of $M$ instead). Notice that since $(i, w_i)$ is a corner, we only have inversions $\lrangle{r, i}$ with $r<i$. Further, there are no inversions $\lrangle{r, i}$ where $r<k$ or $w_r>w_k$; otherwise, we can find an occurrence of 3412. For example, if there were an inversion $\lrangle{r, i}$ with $r<k$, then there must be a red bounding box $B_{a, w_a}$ immediately preceding $B_{k, w_k}$ which overlaps with it. One can check that $r<k<a<i$ and $w_r w_k w_a w_i$ is an occurrence of 3412. A similar argument works for the $w_r>w_k$ case. Thus, if $\lrangle{r, i}$ is an inversion, $\lrangle{k, r}$ is an inversion as well, so every dot sandwiched by an inversion $(r, i)$ is also sandwiched by the inversion $(k, i)$.
 
 In particular, there are no crosses above or to the right of the rectangle with corners $(k, w_k)$ and $(i, w_i)$, since any such cross would be inversions $\lrangle{r, i}$ where $\lrangle{k, r}$ is not an inversion. There are also no crosses northeast of $(k, w_k)$, since such a cross would span $(k, w_k)$. This means there are no dots above or to the right of the rectangle with corners $(k, w_k)$ and $(i, w_i)$. So in fact it suffices to show that all dots in the rectangle with corners $(k, w_k)$ and $(i-1, w_i +1)$ are sandwiched by an inversion of $v$ that does not involve $i$; that inversion will correspond to an inversion in $u$.  If $w_{i-1}<w_i$ or if $w^{-1}(w_i+1)>i$, this is true. Otherwise, we have that $(i-1, w_{i-1})$ and $(w^{-1}(w_i+1), w_i+1)$ both lie in the rectangle with corners $(k, w_k)$ and $(i-1, w_i +1)$. If these points are distinct, then $w_{i-1}>w_i+1$, so $w_k~ w_{i}+1 ~w_{i-1} ~ w_i$ form a 4231 pattern, which is impossible. So we must have $w_{i-1}w_i +1$, which means all points in the rectangle with corners $(k, w_k),(i-1, w_i +1)$ are sandwiched by the inversion $\lrangle{k, i-1}$.
 
Now, suppose $\wire{i}{w_i}$ is a spanning corner and $B_{i, w_i}$ is blue (if it is red, we can consider $w^{-1}$ and the transpose of $M$ instead). If $w_{i+1}=w_i-1$, we again have $A=B$. If not, then the $D$ is not equal to $\gr{e}{u}$;  $D=\gr{e}{u} \sqcup \{(r, c): (r, c) \text{ sandwiched only by inversions involving } i\}$. We will show that if $(r, c)$ is sandwiched only by inversions involving $i$, then $m_{r, c}$ will not appear in $\det(\res{M}{D})$. This will imply that $\det(\res{M}{D})$ agrees with $\det(\res{M}{\gr{e}{u}})$ for all $(n-1) \times (n-1)$ matrices $M$.

Let $I:= \{(r, c): (r, c) \text{ sandwiched only by inversions involving } i\}$. By \cref{lem:invNotInBoxOK}, $I \subseteq B_{i, w_i}$. We would like to show that $D$ is block-upper triangular, and that all $(r,c)\in I$ are in blocks that are not on the main diagonal.  This would imply that $\det(\res{M}{D})$, as claimed, does not contain $m_{r, c}$, as it is the product of the determinants of the blocks on the diagonal.  To verify this, we just need to show that $D \cap B_{i, w_i}$ is block-upper triangular and for all $(r,c)\in I\cap B_{i,w_i}$, $(r,c)$ is in a block that is not on the main diagonal. By \cref{lem:boxIntersectionsMoreGraphs}, $\gr{e}{w} \cap B_{i, w_i}$ is another $\gr{e}{x}$. So $D \cap B_{i, w_i}$ is simply $\gr{e}{x}$ with the first row and last column removed. Thus, it suffices to prove the following lemma.

\begin{lem}
Let $x \in S_m$ be $4231$ avoiding. Suppose $x_1=m$ and $x_2 \neq m-1$. Let $Q$ be the region obtained by removing the first row and last column of $\gr{e}{x}$. Then $Q$ is block-upper triangular, and for all positions $(r, c)$ sandwiched only by inversions involving $1$, $(r,c)$ is in a block not on the main diagonal. 
\end{lem}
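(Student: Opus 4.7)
The plan is to pull back the problem to an analysis of $y := x_2 x_3 \cdots x_m \in S_{m-1}$, the ``tail'' of $x$. A $231$-pattern in $y$ combined with $x_1 = m$ in front would form a $4231$-pattern in $x$, so the hypothesis that $x$ avoids $4231$ forces $y$ to be $231$-avoiding. The payoff is a very rigid block structure: if $y_1 = a$, then the existence of a value $< a$ in some position $> a$ together with the forced presence (by pigeonhole) of a value $> a$ somewhere in positions $[2, a]$ would build a $231$-pattern with $y_1$; hence all of $[1, a-1]$ must appear in positions $[2, a]$, so $y([1, a]) = [1, a]$. Iterating, $[1, m-1]$ partitions into maximal blocks $[l_1, r_1], \ldots, [l_s, r_s]$ with $y([l_i, r_i]) = [l_i, r_i]$, and maximality further forces $y(l_i) = r_i$ in each block.

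Next I would identify, in the relabeled $(m-1) \times (m-1)$ coordinates of $Q$, the decomposition $Q = \gr{e}{y} \sqcup I_1$, where $I_1$ consists of positions $(r, c)$ such that $(r+1, c)$ is sandwiched in $\gr{e}{x}$ only by inversions involving $1$. This is a disjoint union because a non-$1$-inversion of $x$ sandwiching $(r+1, c)$ is the same data as an inversion of $y$ sandwiching $(r, c)$, and a cross at $(r+1, c)$ of $x$ is a cross at $(r, c)$ of $y$. Since $y$ is block-diagonal with respect to the block decomposition, $\gr{e}{y}$ is itself block-diagonal, and the whole question becomes whether $I_1$ lies strictly above the block diagonal.

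In $y$-coordinates, membership of $(r, c)$ in $I_1$ is equivalent to $(r, c) \notin \gr{e}{y}$ together with the existence of some $j \in [r, m-1]$ with $y_j \leq c$. For block-upper triangularity, assume $(r, c) \in I_1$ with $r \in [l_i, r_i]$ and pick any witness $j$. If $j$ lies in a later block $[l_k, r_k]$, then $y_j \geq l_k > r_i$, forcing $c \geq y_j \geq l_k$; if $j \in [l_i, r_i]$, then $y_j \in [l_i, r_i]$ gives $c \geq l_i$. Either way, $c$'s block is weakly later than $r$'s, so $(r, c)$ lies weakly above the block diagonal.

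The main obstacle is to rule out $(r, c) \in I_1$ with $r$ and $c$ in the \emph{same} block $[l_i, r_i]$. Here the crucial input is $y(l_i) = r_i$: any witness $j^* \in [r, r_i]$ with $y_{j^*} \leq c$ makes $\lrangle{l_i, j^*}$ an inversion of $y$ contained in the block (since $y(l_i) = r_i > y_{j^*}$), and this inversion sandwiches $(r, c)$ because $l_i \leq r \leq j^*$ and $y_{j^*} \leq c \leq r_i = y(l_i)$. This contradicts $(r, c) \notin \gr{e}{y}$, except for the degenerate case $j^* = l_i$, which forces $r = l_i$ and $c = r_i$; but then $(r, c) = (l_i, y(l_i))$ is a cross of $y$, and therefore not in $I_1$. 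Combining, every position of $I_1$ lies strictly above the block diagonal, simultaneously giving that $Q$ is block-upper triangular with the blocks of $y$ and that every position sandwiched only by inversions involving $1$ lies in an off-diagonal block.
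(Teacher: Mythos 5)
Your proof is correct, and it takes a genuinely different route than the paper's. The paper stays inside $\gr{e}{x}$: it introduces ``leading dots'' (crosses $(i, x_i)$ not southwest of any cross other than $(1, m)$), and shows via $4231$-avoidance of $x$ together with the presence of the lower off-diagonal that the position $(x_i+2, x_i)$ below each leading dot is empty; this produces the block structure directly, and the second assertion of the lemma follows because the diagonal blocks then contain every position sandwiched by an inversion not involving $1$. You instead pass to the tail $y = x_2 \cdots x_m \in S_{m-1}$, observe that $x$ avoiding $4231$ with $x_1 = m$ forces $y$ to avoid $231$, and use the structural fact that a $231$-avoiding permutation decomposes as a direct sum of plus-indecomposable blocks $[l_i, r_i]$ each satisfying $y(l_i) = r_i$. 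This makes $\gr{e}{y}$ block-diagonal for free, and after writing $Q = \gr{e}{y} \sqcup I_1$ the lemma reduces to pinning $I_1$ strictly above the block diagonal via the clean witness computation you give. The two arguments in fact land on the same block decomposition --- the paper's leading dots, shifted to the $(m-1) \times (m-1)$ grid, are exactly your pairs $(l_i, r_i)$, the left-to-right maxima of $y$ --- but your derivation is more modular, isolating a reusable structural fact about $231$-avoiding permutations instead of reasoning about empty positions one at a time. One small wording issue: the blocks your iteration produces are the \emph{minimal} (plus-indecomposable) intervals preserved by $y$, not maximal ones, and the identity $y(l_i) = r_i$ comes from the iterative construction (equivalently, from plus-indecomposability combined with your pigeonhole argument), not from ``maximality''; the mathematics is fine, the adjective is just the wrong one.
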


\begin{proof}
First, recall that $\gr{e}{x}$ contains the lower off-diagonal $\{(j, j-1): j=2, \dots, m\}$, since $x > m 1 2 3 \cdots (m-1)$.

Call a dot $(i, x_i)$ \emph{leading} if it is not southwest of any $(j, x_j)$ besides $(1, m)$. (For example, in \cref{ex:boxIntersection}, the leading dots of $\gr{e}{u}$ are $(2, 2)$ and $(4,4)$.) We claim that for each leading dot $(i, x_i)$, position $(x_i+2, x_i)$ is not sandwiched by an inversion of $x$.

We show this first for the northmost leading dot $(2, w_2)$. Notice that there cannot be any $(j, x_j)$ with $2<j\leq x_2+1$ and $x_j>x_2$; in this case, avoiding $4231$ would imply that $(j, x_2)$ is not sandwiched by any inversion, which would in turn imply that $(x_2+1, x_2)$ is not sandwiched by any inversion. But the lower off-diagonal is contained in $\gr{e}{x}$, so this is a contradiction. This means that for all $2<j\leq x_2+1$, $(j, x_j)$ is contained in the square with opposing corners $(2, 1)$ and $(x_2+1, x_2)$. In particular, there is a dot $(j, x_j)$ in every row and every column of this square. So for $j>x_2+1$, we have $x_j>x_2$, which implies $(x_2+2, x_2)$ is not sandwiched by an inversion of $x$. In other words, there are no elements of $\gr{e}{x}$ in columns $1, \dots, x_2$ and rows $x_2+2, \dots, m$. This implies that $Q$ is block-upper triangular, and the first diagonal block has northwest corner $(2, w_2)$.

Note that the next leading dot is in row $i_2:=x_2+2$. We can repeat the above argument with this dot to reach the analogous conclusion: the second block of $Q$ has northwest corner $(i_2, x_{i_2})$. We can continue with the remaining leading dots to see that $Q$ is block-upper triangular with northwest corners of each diagonal block given by the leading dots.

Notice that the union of the diagonal blocks contains every position $(r, c)$ sandwiched by an inversion not involving $1$. Thus, the complement of the union of diagonal blocks in $Q$ is exactly the positions $(r, c)$ which are sandwiched only by an inversion involving $1$. This finishes the proof of the lemma.
\end{proof}

\end{proof}

Finally, we can prove \cref{prop:deleteDotDetAnti}.

\begin{proof}[Proof of \cref{prop:deleteDotDetAnti}]
Let $w:=w_0v$ and $u:=w_0x$. Notice that $u: \delta_i(j) \mapsto \delta_{w_i}(w_j)$ and that $w$ avoids 3412 and 4231.

It follows from \cref{lem:reverseCols} that restricting $M$ to $\gr{x}{w_0}$ is the same as reversing the columns of $M$, restricting to $\gr{e}{u}$, and then reversing the columns again. That is, letting $A$ denote the matrix with $1$'s on the antidiagonal and $0$'s elsewhere,
\[
    \res{M}{\gr{x}{w_0}}= [\res{(M A)}{\gr{e}{u}}]A.
\]

Also, note that reversing the columns of $\gr{v}{w_0}^{v_i}_i$ gives $\gr{e}{w}^{w_i}_i$. This means that
\begin{align*}
    \res{M}{\gr{v}{w_0}_{i}^{v_i}}&= [\res{(M A)}{\gr{e}{w}_i^{w_i}}]A.
\end{align*}

Thus, the proposition follows from taking determinants of both sides of \cref{eqn:deleteDotDetAnti} and applying \cref{prop:deleteDotDet}.
\end{proof}

 \section{Acknowledgements} 
 
 We would like to thank Pavlo Pylyavskyy for suggesting this problem to us, and for helpful discussions on Lusztig's dual canonical basis. We would also like to thank Alejandro Morales for pointing us towards \cite{Sjo} and Lauren Williams for helpful conversations.  The first author was partially supported by NSF RTG grant DMS-1745638. The second author was supported by NSF grant DGE-1752814.

\bibliographystyle{siam}
\bibliography{bibliography}

\end{document}